\title[The Euler-Poisson-alignment system]{On the Euler-Poisson equations with variable background states and nonlocal velocity alignment}
\author{Kunhui Luan}
\address[Kunhui Luan]{\newline Department of Mathematics, \ 
 University of South Carolina, 1523 Greene St., Columbia, SC 29208, USA}
\email{kunhui.luan@sc.edu}
\author{Changhui Tan}
\address[Changhui Tan]{\newline Department of Mathematics, \ 
 University of South Carolina, 1523 Greene St., Columbia, SC 29208, USA}
\email{tan@math.sc.edu}
\author{Qiyu Wu}
\address[Qiyu Wu]{\newline Department of Mathematics, \ 
 University of South Carolina, 1523 Greene St., Columbia, SC 29208, USA}
\email{qiyu@email.sc.edu}
\thanks{\textit{Acknowledgment.} This work has been supported by the NSF grants DMS-2108264 and DMS-2238219.}
\subjclass[2020]{35B30,\,35B51,\,35Q35,\,35L67}
\keywords{Euler-Poisson-alignment system, critical thresholds, nonlocal interactions, comparison principles, Lyapunov functions}
\newtheorem{theorem}{Theorem}[section]
\newtheorem{lemma}[theorem]{Lemma}
\newtheorem{proposition}[theorem]{Proposition}
\theoremstyle{definition}
\theoremstyle{remark}
\newtheorem{remark}{Remark}[section]
\numberwithin{equation}{section}
\def\R{\mathbb{R}}
\def\T{\mathbb{T}}
\def\dd{\mathrm{d}}
\def\pa{\partial}
\def\psim{\psi_-}%\underline{\psi}}
\def\psiM{\psi_+}%\overline{\psi}}
\def\cm{c_-}
\def\cM{c_+}
\def\nvm{\nu_-}
\def\nvM{\nu_+}
\def\CTsub{\widehat\Sigma_\flat} % Thresholds for (rho, G)
\def\CTsup{\widehat\Sigma_\sharp}
\def\ctsub{\Sigma_\flat} % Thresholds for (w, s)
\def\ctsup{\Sigma_\sharp} 
\def\L{\mathcal{L}}
\def\Lt{\widetilde{\L}}
\def\Rt{\widetilde{R}}
\def\Ct{\widetilde{C}}
\def\wt{\widetilde{w}}
\def\st{\widetilde{s}}
\def\Pt{\widetilde{P}}
\def\Nt{\widetilde{N}}
\def\tt{\widetilde{t}}
\newcommand*\circled[1]{\tikz[baseline=(char.base)]{
            \node[shape=circle,draw,inner sep=1pt] (char) {#1};}}
\begin{document}
%%%%%%%%%%%%%%%%
\allowdisplaybreaks

\begin{abstract}
We study the 1D pressureless Euler-Poisson equations with variable background states and nonlocal velocity alignment. Our main focus is the phenomenon of critical thresholds, where subcritical initial data lead to global regularity, while supercritical data result in finite-time singularity formation. The critical threshold behavior of the Euler-Poisson-alignment (EPA) system has previously been investigated under two specific setups: 
(1) when the background state is constant, phase plane analysis was used in \cite{bhatnagar2023critical} to establish critical thresholds; and 
(2) when the nonlocal alignment is replaced by linear damping,  comparison principles based on Lyapunov functions were employed in \cite{choi2024critical}.

In this work, we present a comprehensive critical threshold analysis of the general EPA system, incorporating both nonlocal effects. Our framework unifies the techniques developed in the aforementioned studies and recovers their results under the respective limiting assumptions. A key feature of our approach is the oscillatory nature of the solution, which motivates a decomposition of the phase plane into four distinct regions. In each region, we implement tailored comparison principles to construct the critical thresholds piece by piece.
 \end{abstract}

\maketitle %\centerline{\date}

\tableofcontents

\section{Introduction}\label{sec:intro}
We consider the following one-dimensional pressureless Euler equation with nonlocal interacting forces:
\begin{equation}\label{eq:EPA}
\begin{cases}
 \,\,\pa_t \rho +\pa_x(\rho u)=0, \qquad x\in\Omega, \quad t\geq0,\\
 \,\,\pa_t (\rho u) +\pa_x (\rho u^2) = -k \rho\pa_x\phi+\displaystyle\int_\Omega \psi(x-y)\big(u(y)-u(x)\big)\rho(x)\rho(y)\,\dd y,\\
 \,\,-\pa_{x}^2\phi = \rho - c,
\end{cases}
\end{equation}
subject to initial data
\begin{equation}\label{eq:init}
  \rho(x,0) = \rho_0(x), \quad u(x,0) = u_0(x).
\end{equation}
Here, $(\rho,u)$ denote the density and velocity, respectively. The spatial domain $\Omega$ is taken to be the periodic domain $\T$, though it can be extended to the entire real line $\R$.

The system is known as the \emph{Euler-Poisson-alignment} (EPA) system. The right-hand side of the momentum equation \eqref{eq:EPA}$_2$ includes two types of interacting forces: the \emph{attractive-repulsive force} and the \emph{alignment force}.

\subsection{The Euler-Poisson equations}
When the alignment force is absent ($\psi \equiv 0$), the system \eqref{eq:EPA} reduces to the pressureless \emph{Euler-Poisson equations}, which model various physical phenomena, including semiconductor and plasma dynamics. The interaction force is represented by a potential $\phi$ generated by the electric field, which is governed by the Poisson equation \eqref{eq:EPA}$_3$. The parameter $k$ quantifies the strength of the electric force: it is attractive when $k < 0$ and repulsive when $k > 0$. The function $c=c(x,t)$ represents the background charge. 

The global well-posedness theory for the 1D pressureless Euler-Poisson equations was first established in \cite{engelberg2001critical} under the assumption of a constant background charge. The global behavior of the solution exhibits a \emph{critical threshold} phenomenon, where the initial data create a dichotomy: subcritical initial data lead to global smooth solutions, while supercritical data result in finite-time singularity formation. An explicit expression for the threshold (see \eqref{eq:CTEP}) was derived, providing a sharp distinction between these two regions.

A generalization of the Euler-Poisson equation arises when the background charge is not constant, referred to as the \emph{variable background} case. The critical threshold phenomenon for this system has been recently investigated in \cite{bhatnagar2020critical, choi2024critical, rozanova2025repulsive}. In particular, \cite{choi2024critical} establishes subcritical and supercritical regions under the presence of additional velocity damping. To address the challenges posed by the variable background, a series of \emph{comparison principles} were developed based on Lyapunov functions introduced in \cite{bhatnagar2020critical2}, leading to the identification of non-trivial subcritical and supercritical regions

We would like to mention other extensions, though these will not be the focus of this paper. First, the Euler-Poisson equations in two or three dimensions are less understood. Global theory for smooth solutions is known for radially symmetric data \cite{wei2012critical,tan2021eulerian,carrillo2023existence}. For general data, critical thresholds are established for a 2D restricted Euler-Poisson (REP) system in \cite{tadmor2003critical}, but global regularity for the Euler-Poisson equations remains an open question. Second, a series of works have explored weak solutions to the Euler-Poisson equations, particularly in 1D \cite{gangbo2009euler, natile2009wasserstein, brenier2013sticky, carrillo2023equivalence}, leveraging the system's geometric structure. Finally, the equations have also been studied with the inclusion of pressure \cite{guo1998smooth, tadmor2008global, guo2011global}.

\subsection{The Euler-alignment system}
When the attractive-repulsive force is absent ($k=0$), the system \eqref{eq:EPA} reduces to the pressureless \emph{Euler-alignment system} (EAS), which models flocking behavior in animal swarms. This system serves as the macroscopic representation of the Cucker-Smale dynamics \cite{cucker2007emergent}. The function $\psi$ is known as the \emph{communication protocol}, which quantifies the strength of pairwise alignment interactions. A natural assumption is that $\psi$ is a radially decreasing function, meaning that communication weakens as the distance between agents increases.

The global behavior of solutions to the EAS varies depending on the type of communication protocol. When $\psi$ is \emph{bounded}, critical thresholds were established in \cite{tadmor2014critical,carrillo2016critical, he2017global}. In particular, a sharp critical threshold for the 1D EAS was found in \cite{carrillo2016critical}. Similar behavior occurs when $\psi$ is \emph{weakly singular}, meaning it is unbounded but integrable at the origin \cite{tan2020euler,leslie2020lagrangian}. Another interesting scenario arises when $\psi$ is \emph{strongly singular}, meaning it is unintegrable at the origin. In this case, strong alignment leads to global regularity of the 1D EAS for all smooth initial data \cite{shvydkoy2017eulerian,do2018global}.

The multi-dimensional EAS has been studied under radial symmetry \cite{tan2021eulerian} and in the case of uni-directional flow \cite{lear2021unidirectional, lear2022existence, li2024global}. When pressure is present, the system has been analyzed in \cite{choi2019global, constantin2020entropy, chen2021global}. Recent studies on weak solutions to the 1D EAS can be found in \cite{leslie2018weak, leslie2023sticky, leslie2024finite, galtung2025sticky}.

\subsection{The Euler-Poisson-alignment system}
When both forces are active, the EPA system serves as a prototypical example of the hydrodynamic three-zone interaction model in collective dynamics, featuring long-range attraction, short-range repulsion, and mid-range alignment.

The critical threshold phenomenon for the 1D EPA system was first discussed in \cite{carrillo2016critical} for the case where the electric field has zero background ($c \equiv 0$) and the alignment force has a bounded communication protocol. In \cite{kiselev2018global}, the EPA system with a strongly singular communication protocol is investigated. It is shown that strong alignment interaction dominates the attractive-repulsive force, ensuring global regularity of the solution.

The 1D EPA system with a positive constant background charge and a bounded communication protocol was studied in a recent work \cite{bhatnagar2023critical}. Compared to the zero background charge case, the main challenge lies in the oscillatory nature of the solution over time. A common approach to handling the nonlocal alignment interaction is the use of comparison principles to relate the solution to a local system. However, the oscillations make it difficult to apply these principles directly. To overcome this, a careful phase-plane analysis was developed, providing a framework for alternating between two comparison principles, ultimately leading to the construction of critical thresholds. The result has also been extended to weakly singular communication protocols.

The main purpose of this paper is to study the critical threshold phenomenon for the EPA system \eqref{eq:EPA} under a general setup, incorporating two sources of nonlocal input: first, an electric force with a variable background charge $c = c(x,t)$, and second, a nonlocal alignment force with $\psi = \psi(r)$.

To effectively handle both nonlocal effects in the presence of oscillatory solutions, we extend the approach in \cite{bhatnagar2023critical} by constructing threshold regions using \emph{four} comparison principles. We partition the phase plane into four regions, assigning a specific comparison principle to each. This allows us to define subcritical and supercritical regions that adhere to these principles, forming \emph{invariant regions} that guarantee global well-posedness in the subcritical case and finite-time blowup in the supercritical case. See Theorem \ref{thm:thresholds} for the precise statement of our main result.

Moreover, we demonstrate that our critical thresholds can be explicitly formulated by utilizing the Lyapunov functions from \cite{choi2024critical}. In fact, when $\psi$ is taken as a constant, our constructed threshold regions coincide with those established in \cite{choi2024critical}.

Our result provides a unified approach that includes both \cite{bhatnagar2023critical} and \cite{choi2024critical} as special cases, where only one nonlocal input is turned on in each case. Our approach generalizes both cases by simultaneously incorporating both nonlocal effects, showing that the two analytical methods used in these previous works are equivalent within our broader framework.

\subsection{Outline of the paper}
The rest of the paper is organized as follows. In Section \ref{sec:results}, we outline the assumptions and present the main results for the EPA system \eqref{eq:EPA}. Section \ref{sec:construction} focuses on the construction of the critical thresholds, where we introduce Lyapunov functions to provide explicit expressions for the threshold regions. In Section \ref{sec:proof}, we establish comparison principles based on Lyapunov functions, and apply them to prove the main critical thresholds theorem. Section \ref{sec:phase} contains an auxiliary phase plane analysis, which provides explicit expressions for key parameters and the admissible conditions. Finally, we conclude with a discussion on various extensions in Section \ref{sec:extension}.

\section{Statements of main results}\label{sec:results}
In this section, we present our results on the Euler-Poisson-alignment system \eqref{eq:EPA} with variable background states.

We start with assumptions on the background state $c=c(x,t)$. The first assumption is a \emph{compatibility condition}
\begin{equation}\label{A1}\tag{A1}
 \int_\T c(x,t)\,\dd x = \int_\T \rho_0(x)\,\dd x.	
\end{equation}
This condition arises from the Poisson equation \eqref{eq:EPA}$_3$. Integrating the equation in $x$ gives
\begin{equation}\label{eq:compcond}
\int_\T\rho(x,t)-c(x,t)\,\dd x=0.	
\end{equation}
Using the conservation of mass, we obtain \eqref{A1}. Thus, the compatibility condition is necessary to ensure the solvability of the Poisson equation \eqref{eq:EPA}$_3$. More precisely, under \eqref{eq:compcond}, there exists a solution $\phi$ to \eqref{eq:EPA}$_3$, unique up to a constant shift. Consequently, the Poisson force $\pa_x\phi$ in \eqref{eq:EPA}$_2$ is uniquely determined by $\rho-c$.

Additionally, we impose the following regularity assumption on the background state:
\begin{equation}\label{A2}\tag{A2}
 c \in W^{1,\infty}\big(\R_+; H^{s}(\T)\big)~\cap~L^\infty\big(\R_+; H^{s+1}(\T)\big),	
\end{equation}
as well as the upper and the lower bounds
\begin{equation}\label{A3}\tag{A3}
  0 < \cm \leq c(x,t) \leq \cM,\quad \forall~ x\in\T,\,\,t\geq0.		
\end{equation}

We will focus on the case where the Poisson interaction is repulsive, namely
\begin{equation}\label{A4}\tag{A4}
  k>0.	
\end{equation}

Finally, we assume the communication protocol $\psi$ in the alignment force is bounded from above and below:
\begin{equation}\label{A5}\tag{A5}
  0 < \psim \leq \psi(x) \leq \psiM,\quad \forall~ x\in\T.	
\end{equation}

\subsection{Reformulation of the system}
Let us introduce an auxiliary quantity 
\begin{equation}\label{eq:G}
  G = \pa_x u + \psi\ast\rho,	
\end{equation}
where $\ast$ denotes the convolution in $x$-variable. This quantity was first introduced in \cite{carrillo2016critical} for the Euler-alignment system. One can easily deduce from \eqref{eq:EPA} that $G$ satisfies the following dynamics:
\[
 \pa_t G + \pa_x(Gu) = k(\rho-c).
\]
Therefore, the system \eqref{eq:EPA} can be equivalently reformulated as the dynamics of $(\rho, G)$ as follows:
\begin{equation}\label{eq:main}
  \begin{cases}
    \partial_t \rho +\partial_x(\rho u)=0,  \\
    \partial_t G+\partial_x(Gu) = k(\rho-c).
  \end{cases}
\end{equation}
The velocity $u$ can be recovered from the relation \eqref{eq:G}, up to a constant shift. Integrating \eqref{eq:EPA}$_2$ in $x$ yields the conservation of momentum:
\[\frac{\dd}{\dd t}\int_\T\rho u\,\dd x = 0,\quad \text{namely}\quad \int_\T\rho(x,t)u(x,t)\dd x = \int_\T\rho_0(x)u_0(x)\dd x,\]
which can be used to uniquely determine the constant shift.

\subsection{Local well-posedness}
We state the following local well-posedness and regularity criterion result for the EPA system.
\begin{theorem}[Local well-posedness]\label{thm:LWP} 
  Let $s>\frac{1}{2}$. Consider the system \eqref{eq:main} with initial data $(\rho_0,G_0)$ satisfying
    \begin{equation}\label{eq:reginit}
        \rho_0>0,\quad (\rho_0-c_0,G_0) \in  H^s(\T)\times H^s(\T).
    \end{equation}
  Assuming \eqref{A1}--\eqref{A5} hold.   
    Then there exists a positive constant $T>0$ such that solution
   \begin{equation}\label{eq:regrhoG}
        (\rho,G) \in C\big([0,T];H^s(\T)\big)\times C\big([0,T];H^s(\T)\big). 
    \end{equation}
  Consequently, the EPA system \eqref{eq:EPA} has a unique solution 
    \begin{equation}\label{eq:regrhou}
        (\rho,u) \in C\big([0,T];H^s(\T)\big)\times C\big([0,T];H^{s+1}(\T)\big). 
    \end{equation} 
  Moreover, $T$ can be extended as long as
    \begin{equation}\label{eq:BKM}
        \int_0^T \Big(\|\rho(\cdot,t)\|_{L^\infty}+\|G(\cdot,t)\|_{L^\infty}\Big)\,\dd t< \infty.
    \end{equation}   
\end{theorem}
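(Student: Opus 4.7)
The plan is to follow a standard symmetric-hyperbolic energy method applied to the reformulated system \eqref{eq:main} in the variables $\tilde\rho := \rho - c$ and $G$, which are the quantities assumed to lie in $H^s(\T)$. The velocity is recovered from \eqref{eq:G} via $\partial_x u = G - \psi\ast\rho$ together with the conservation of momentum noted after \eqref{eq:main}; on the periodic domain this gains one derivative, so $\|u\|_{H^{s+1}}\lesssim \|G\|_{H^s}+\|\rho\|_{H^s}$, which delivers \eqref{eq:regrhou} once \eqref{eq:regrhoG} has been established. Positivity of $\rho$ propagates along the characteristics of $\partial_t\rho+u\partial_x\rho=-\rho(G-\psi\ast\rho)$ as long as $\rho$ and $G$ remain bounded, since the right-hand side is then a bounded multiple of $\rho$.

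For existence I would mollify the initial data and solve the regularized system by Picard iteration. Applying the Bessel potential $\Lambda^s:=(I-\partial_x^2)^{s/2}$ to each equation of \eqref{eq:main} and pairing with $\Lambda^s\tilde\rho$ and $\Lambda^sG$, the transport contributions $u\partial_x\tilde\rho$ and $u\partial_x G$ are controlled via the Kato--Ponce commutator estimate, with loss scaled by $\|\partial_x u\|_{L^\infty}\lesssim \|G\|_{L^\infty}+\psiM\|\rho_0\|_{L^1}$, where mass conservation has been used. The stretching terms $-\rho\partial_x u$, $-G\partial_x u$ and the linear source $k(\rho-c)$ are handled by Moser-type product inequalities, while the inhomogeneity produced by the time-dependent background---the terms $-u\partial_x c - \partial_t c$ appearing in the equation for $\tilde\rho$---is bounded uniformly in $t$ by \eqref{A2}. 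Summing the two estimates yields a closed differential inequality for $E(t):=\|\tilde\rho(\cdot,t)\|_{H^s}^2+\|G(\cdot,t)\|_{H^s}^2$, and integrating produces a common existence time $T>0$ depending only on the initial $H^s$ norms and on the constants in \eqref{A1}--\eqref{A5}. Standard compactness in the mollification parameter then produces a limit with the regularity \eqref{eq:regrhoG}.

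Uniqueness is obtained by a contraction-type $L^2$ estimate for the difference of two solutions with identical initial data, using the embedding $H^s\hookrightarrow L^\infty$ (valid since $s>\frac{1}{2}$) to supply the pointwise bounds needed to close a Gr\"onwall argument. The continuation criterion \eqref{eq:BKM} requires the additional refinement that the growth factor in the $H^s$ inequality depend only on $\|\rho\|_{L^\infty}+\|G\|_{L^\infty}$, and not on higher Sobolev norms. This is achieved by a single integration by parts in $\int \Lambda^s(u\partial_x w)\,\Lambda^s w\,\dd x$ that exposes $\partial_x u = G-\psi\ast\rho$, so that the commutator loss is $L^\infty$-controlled, together with a Moser-type split of the quadratic stretching terms balancing $L^\infty$ and $H^s$ factors. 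I expect this refinement---arranging every commutator and product estimate so that only $L^\infty$ quantities appear in the Gr\"onwall exponent---to be the main technical obstacle; once it is in place, $E(t)$ cannot blow up before the integral in \eqref{eq:BKM} does, yielding the claimed continuation property.
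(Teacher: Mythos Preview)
Your proposal is correct and follows essentially the same route as the paper: an a priori $H^s$ energy estimate on $(\rho-c,G)$ using the Kato--Ponce commutator estimate for the transport terms, the relation $\partial_x u = G-\psi\ast\rho$ to reduce everything to $\|\rho\|_{L^\infty}+\|G\|_{L^\infty}$, and Gr\"onwall for both local existence and the continuation criterion. The one point you leave implicit but the paper makes explicit is the bound $\|u\|_{L^\infty}\lesssim 1+\|\partial_xu\|_{L^\infty}$ on the torus (via conservation of momentum to fix the mean), which is needed when the inhomogeneity $u\partial_x c$ hits $\|c\|_{H^{s+1}}$ from \eqref{A2}; once that is in hand your refinement for \eqref{eq:BKM} goes through exactly as you describe.
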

The local well-posedness theory for the EPA system with constant background has been established in, e.g. \cite[Theorem 2.1]{bhatnagar2023critical}. The variable background is addressed in, e.g. \cite{choi2024critical}, without the alignment force. For the sake of completeness, we provide a proof of Theorem \ref{thm:LWP} in Appendix \ref{sec:LWP}. 

\subsection{Global regularity versus finite time blowup}
The criterion \eqref{eq:BKM} suggests that global regularity depends on the boundedness of $\rho$ and $G$. To investigate this, we analyze the dynamics of these quantities along each characteristic path $X(t)=X(t;x)$ originating from $x\in\T$, defined by
\[\frac{\dd}{\dd t}X(t) = u\big(X(t),t\big),\quad X(0) = x.\]
Denote $^\prime$ as the material derivative along the characteristic path
\[f'(t): = \frac{\dd}{\dd t}f(X(t),t) = \pa_t f(X(t),t) + u(X(t),t)\,\pa_xf(X(t),t)).\]
From the system \eqref{eq:main} and the relation \eqref{eq:G}, we derive the following coupled dynamics:
\begin{equation}\label{eq:rhoG}
 \begin{cases}
   \rho' = - \rho u = -\rho (G-\psi\ast\rho),\\
   G' = - Gu + k(\rho-c) = -G(G-\psi\ast\rho)+k(\rho-c).
 \end{cases}	
\end{equation}

The analysis of the dynamics \eqref{eq:rhoG} along characteristic paths reveals a \emph{critical threshold} phenomenon: the boundedness of $\rho$ and $G$ depends on the initial data. Specifically, subcritical initial data ensures that $(\rho, G)$ remain bounded for all time, leading to global well-posedness by Theorem \ref{thm:LWP}, as \eqref{eq:BKM} holds for all time. Conversely, supercritical initial data results in finite-time singularity formation.

\begin{theorem}[Critical thresholds]\label{thm:thresholds}
Consider the EPA system \eqref{eq:main} with the assumptions in Theorem \ref{thm:LWP}. The following results hold:
 \begin{itemize}
  \item Global well-posedness: under appropriate admissible conditions, there exists a subcritical region $\CTsub\subset\R\times\R_+$, such that if the initial data $(\rho_0, G_0)$ satisfy
   \[\big(G_0(x),\rho_0(x)\big)\in \CTsub,\quad \forall~x\in\T,\]
   then the EPA system \eqref{eq:main} has a unique global smooth solution, in the sense that \eqref{eq:regrhoG} holds for all time $T$.
  \item Finite time blowup: there exists a supercritical region $\CTsup\subset\R\times\R_+$, such that if the initial data $(\rho_0, G_0)$ satisfy
  \[\exists~x_*\in\T \quad\text{such that}\quad \big(G_0(x_*),\rho_0(x_*)\big)\in \CTsup,\]
   then smooth solution to the EPA system \eqref{eq:main} stops existing at a finite time $T_*$. Moreover, the solution generates a singular shock at the blowup time $T_*$ and location $X_* = X(T_*; x_*)$, namely
   \begin{equation}\label{eq:singularshock}
	\lim_{t\to T_*^-}\rho(X_*,t)=\infty,\quad \lim_{t\to T_*^-}G(X_*,t)=\lim_{t\to T_*^-}\pa_xu(X_*,t)=-\infty.     
   \end{equation}
 \end{itemize}
\end{theorem}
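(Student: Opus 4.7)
The plan is to analyze the coupled ODEs \eqref{eq:rhoG} along characteristics as a two-dimensional system in $(G,\rho)$ driven by the two nonlocal inputs $\psi\ast\rho$ and $c$. Thanks to mass conservation together with \eqref{A3} and \eqref{A5}, both inputs are trapped in compact intervals
\[
 \psim M \leq \psi\ast\rho \leq \psiM M, \qquad \cm \leq c \leq \cM,
\]
where $M = \int_\T \rho_0\,\dd x$. Replacing $(\psi\ast\rho, c)$ by each of the four corner pairs $(\psim M, \cm)$, $(\psim M, \cM)$, $(\psiM M, \cm)$, $(\psiM M, \cM)$ produces four closed planar comparison ODEs, each of the Euler-Poisson-with-damping type treated in \cite{choi2024critical}, whose trajectories admit explicit Lyapunov functions.

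The key step is to partition the $(G,\rho)$ plane into four regions, chosen so that inside each region exactly one of the four corner ODEs gives a one-sided comparison pointing the true trajectory in the desired direction --- \emph{inward}, toward a candidate trapping set, for the subcritical analysis, and \emph{outward}, toward blow-up, for the supercritical one. The division is dictated by the signs of the coefficients multiplying $\psi\ast\rho$ in $\rho'$ and $G'$ (namely the signs of $\rho$ and $G$), together with the geometric structure of the oscillatory phase portrait induced by $k>0$. In each region I would adapt a Lyapunov function $\L$ from \cite{choi2024critical} whose sublevel set is forward-invariant for the corresponding corner ODE; the subcritical region $\CTsub$ is then assembled piece by piece as the intersection of the four regional sublevel sets, matched along their common boundaries.

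Once $\CTsub$ is defined, its invariance under the true nonlocal dynamics \eqref{eq:rhoG} reduces inside each region to the correctness of the regional comparison plus continuity of the assembled $\L$ across region transitions. The resulting uniform-in-$(x,t)$ bound on $(\rho,G)$, combined with the continuation criterion \eqref{eq:BKM} in Theorem \ref{thm:LWP}, delivers global regularity. For the supercritical claim, I would reverse the comparison orientation in the region where $G$ is sufficiently negative: there the best-case corner ODE still drives its trajectories to $(G,\rho) = (-\infty,+\infty)$ in finite time via a Lyapunov function that reaches zero in finite time. Applied along the characteristic $X(\cdot; x_\ast)$ emanating from $x_\ast$ with $(G_0(x_\ast), \rho_0(x_\ast)) \in \CTsup$, this gives an upper bound $T_\ast<\infty$ on the existence time and forces $\rho(X_\ast,t)\to\infty$ and $G(X_\ast,t)\to-\infty$; the uniform bound $\psi\ast\rho \leq \psiM M$ then yields $\pa_x u = G - \psi\ast\rho \to -\infty$, producing the singular shock \eqref{eq:singularshock}.

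The main obstacle will be the geometric and algebraic bookkeeping in gluing the four Lyapunov pieces into a single coherent invariant set $\CTsub$. Because the linearization of \eqref{eq:rhoG} near the equilibrium $(G,\rho)=(0,c)$ is oscillatory when $k>0$, characteristic trajectories typically sweep repeatedly through all four regions, so the matching of the regional Lyapunov level sets along their common boundaries becomes non-trivial and translates into the \emph{admissible conditions} on $(k, \psim, \psiM, \cm, \cM, M)$ referenced in the statement. Carrying this matching out explicitly, and verifying that $\CTsub$ is non-empty under these admissible conditions, is the technical heart of the argument and is the subject of the phase-plane analysis previewed in Section \ref{sec:phase}.
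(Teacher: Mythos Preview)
Your proposal is correct and follows essentially the same strategy as the paper: four corner comparison systems, a four-region partition of the phase plane, Lyapunov functions of the type in \cite{choi2024critical} in each region, and comparison principles glued along region boundaries into an invariant set, with the admissible conditions arising exactly from the matching. The paper's one technical simplification you should adopt is the change of variables $(w,s)=(G/\rho,1/\rho)$, under which each corner system becomes \emph{linear} and hence explicitly solvable; note also that the four regions are cut out by the signs of $1-c_\pm s$ and $w-\nu_\pm s$ (equivalently $\rho\lessgtr c_\pm$ and $G\lessgtr\nu_\pm$), not by the signs of $\rho$ and $G$ themselves, and that the supercritical argument likewise cycles through all four regions rather than residing in a single ``$G$ sufficiently negative'' zone.
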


The critical threshold phenomenon was first studied in \cite{engelberg2001critical} for the Euler-Poisson equation with a constant background charge, which corresponds to the case where $\psi \equiv 0$ and $c$ is a constant. In this setting, the coupled dynamics \eqref{eq:rhoG} can be solved explicitly, yielding explicit subcritical and supercritical regions:
\begin{equation}\label{eq:CTEP}	
\CTsub = \left\{(G,\rho) : G^2 < \sqrt{k(2\rho-c)}\right\},\quad \text{and}\quad 
\CTsup = (\R\times\R_+)\backslash\CTsub.
\end{equation}
Since these two regions partition the entire space $\R\times\R_+$, the threshold condition is \emph{sharp}.

For the general EPA system \eqref{eq:main}, the threshold regions $\CTsub$ and $\CTsup$ in Theorem \ref{thm:thresholds} are expressed in a less explicit form. Figure \ref{fig:CT} illustrates the critical thresholds under three typical setups, as introduced in \cite{bhatnagar2023critical}: 
\begin{itemize}
 \item weak alignment: $0<\nvM<2\sqrt{k\cm}$,
 \item median alignment: $\nvM\geq 2\sqrt{k\cm}$ and $\nvm < 2\sqrt{k\cM}$,
 \item strong alignment: $\nvm\geq 2\sqrt{k\cM}$,
\end{itemize}
where $\nvM$ and $\nvm$ are defined in \eqref{eq:nupm}, measuring the strength of the alignment force.

We will provide detailed expressions for the threshold regions under each setup  in Theorems \ref{thm:sub} and \ref{thm:sup}. In particular, some \emph{admissible conditions} \eqref{AC1}-\eqref{AC3} are required to obtain $\CTsub$ in weak and median alignment cases.

\begin{figure}[h!]
\centering
\includegraphics{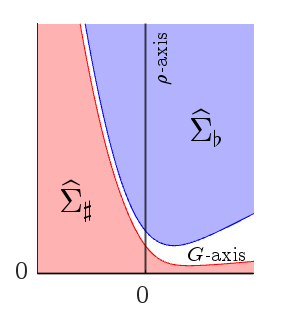}	
\includegraphics{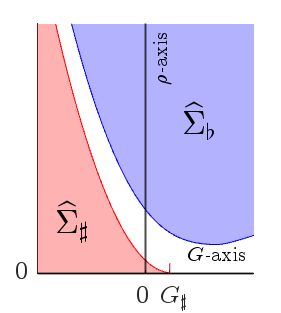}	
\includegraphics{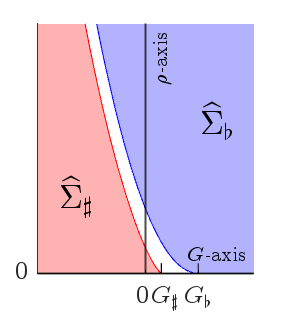}	
\caption{Illustration of the critical thresholds $\CTsub$ and $\CTsup$. Left: weak alignment; middle: median alignment; right: strong alignment.}\label{fig:CT}
\end{figure}

As seen in Figure \ref{fig:CT}, there is a gap between the two threshold conditions. So our threshold conditions are not necessarily sharp. This gap primarily arises from the nonlocal effects. In the special case where $\cm=\cM$ and $\nvm=\nvM$, our result becomes sharp. If we additionally assume $\nvm=\nvM=0$, the system reduces to the Euler-Poisson equation with a constant background charge, and our threshold conditions recover \eqref{eq:CTEP}. The points $G_\flat$ and $G_\sharp$ in the figure have the explicit form \eqref{eq:Gflat} and \eqref{eq:Gsharp}, respectively.

Our result is also compatible with the previous works. When $\cm=\cM$, the thresholds we derive coincide with the conditions obtained in \cite{bhatnagar2023critical}. When $\nvm=\nvM$, our thresholds match those in \cite{choi2024critical}. Notably, we establish a unified approach that aligns with both prior results, despite these results being derived through different analytical methods. 

\section{Construction of the critical thresholds}\label{sec:construction}
In this section, we construct the critical threshold regions $\CTsub$ and $\CTsup$ for the EPA system \eqref{eq:main}.

Compared with the Euler-Poisson equations with constant background, the introduction of a variable background and alignment force presents a significant challenge in analyzing critical thresholds. Specifically, the dynamics \eqref{eq:rhoG} depend on the nonlocal quantities:
\[ c = c(x,t), \quad \text{and}\quad \nu = \nu(x,t) :=\psi\ast\rho(x,t) = \int_\T\psi(x-y)\rho(y,t)\,\dd y,\] 
whose values change along the characteristic path and may not be determined solely by local information. 

The main idea is to develop \emph{comparison principles} that allow us to compare \eqref{eq:rhoG} with localized systems in which $c$ and $\nu$ are replaced by fixed values.

From the assumptions \eqref{A3} and \eqref{A5}, we have the following bounds on $\nu$ and $c$:
\begin{equation}\label{eq:nuc}
 0<\cm\leq c\leq \cM, \quad\text{and}\quad 0<\nvm\leq\nu\leq\nvM, 
\end{equation}
where
\begin{equation}\label{eq:nupm}
 \nvm = \psim \int_\T\rho_0(x)\,\dd x,\quad
 \nvM = \psiM \int_\T\rho_0(x)\,\dd x.
\end{equation}

Previous works have successfully addressed two key scenarios:
\begin{itemize}
\item Constant $c$ and variable $\nu$. This corresponds to the EPA system with a constant background. Critical thresholds were established in \cite{bhatnagar2023critical}, where the threshold regions were constructed piecewise using comparison principles applied to localized systems with $\nu$ replaced by $\nu_\pm$.
\item Variable $c$ and constant $\nu$. This setting corresponds to the Euler-Poisson equations with velocity damping. Critical thresholds were recently studied in \cite{choi2024critical}, where comparison principles were applied through a different approach based on a set of Lyapunov functions.
\end{itemize}

Our goal is to develop a universal approach that applies to both scenarios, as well as the general case where neither $\nu$ nor $c$ is constant. Additionally, we will demonstrate the equivalence of the two different  approaches on comparison principles.

\subsection{A reformed coupled system}
For analytical convenience, following \cite{bhatnagar2023critical}, we introduce two new variables:
\begin{equation}\label{eq:ws}
 w:=\frac{G}{\rho},\quad s:=\frac{1}{\rho}.
\end{equation}
The dynamics of $(w,s)$ can be deduced from \eqref{eq:rhoG} and takes the form:
\begin{equation}\label{S}\tag{S}
   \begin{cases}
    w' = k(1-cs),\\
    s' = w-\nu s.
   \end{cases}
\end{equation}

Note that the forcing term on the right-hand side of \eqref{S} is Lipschitz in $(w, s)$, ensuring that the solution $(w(t), s(t))$ remains bounded for all finite times. Consequently, the only possible finite-time blowup scenario occurs when  $s(T_*) = 0$, which corresponds to $\lim_{t \to T_*^-} \rho(t) = \infty$.

We will construct threshold regions in $\R\times\R_+$ for the $(w,s)$ dynamics \eqref{S} with the following properties:
\begin{itemize}
 \item Subcritical region $\ctsub$: if $(w_0,s_0)\in\ctsub$, then $s(t)>0$ for all time.
 \item Supercritical region $\ctsup$: if  $(w_0,s_0)\in\ctsup$, then there exists a finite time $T_*$ such that $s(T_*)=0$.
\end{itemize}

The threshold regions $\CTsub$ and $\CTsup$ on $(G,\rho)$ can then be determined using the relation \eqref{eq:ws}. This classification ultimately dictates whether the EPA system \eqref{eq:main} exhibits global regularity or finite-time singularity formation.

To effectively handle the nonlocality, we compare the system \eqref{S} with a localized system in which $c$ and $\nu$ are replaced by $c_\pm$ and $\nu_\pm$, respectively. The four corresponding localized systems are denoted by (\ref{S}$^{\pm\pm}$), where the first  $\pm$ refers to $c_\pm$ and the second to $\nu_\pm$.

We highlight a major challenge: the solutions $(w(t), s(t))$ exhibit oscillatory behavior over time. As a result, establishing comparison principles is highly non-trivial. To address this, we must alternate the comparison among all four localized systems, depending on the phase in which the solution resides at a given time.

\subsection{Subcritical region}
Next, we describe the construction of the subcritical region $\ctsub$, which is enclosed by up to four trajectories of the localized systems.

\begin{itemize}  
\setlength\itemsep{.5em}
 \item[\circled{1}]	In region $R_1=\{(w,s): 0<s<1/\cM, w<\nvM s\}$, we compare with the system (\ref{S}$^{++}$). The curve $C_1$ is a trajectory of (\ref{S}$^{++}$) that passes through (0,0). It exits the region at $(w_1,s_1)$, where $s_1=1/\cM$.
 \item[\circled{2}] In region $R_2=\{(w,s):s>1/\cM, w<\nvm s\}$, we compare with the system (\ref{S}$^{+-}$). The curve $C_2$ is a trajectory of (\ref{S}$^{+-}$) that passes through $(w_1,s_1)$. There are two scenarios:
 \begin{itemize}
  \item[(I).] If $\nvm\geq 2\sqrt{k\cM}$, $C_2$ won't exit the region $R_2$. The subcritical region $\ctsub$ consists the area in $\R\times\R_+$ to the right of the curve $C_1\cup C_2$.
  \item[(II).] If $0<\nvm< 2\sqrt{k\cM}$, $C_2$ exits the region at $(w_2,s_2)$, where $w_2 = \nvm s_2$. We continue our construction, under the admissible condition
  \begin{equation}\label{AC1}\tag{AC1}
	 s_2>\frac{1}{\cm}. 	
  \end{equation}  
 \end{itemize} 
 \item[\circled{3}] In region $R_3=\{(w,s):s>1/\cm, w>\nvm s\}$, we compare with the system (\ref{S}$^{--}$). The curve $C_3$ is a trajectory of (\ref{S}$^{--}$) that passes through $(w_2,s_2)$. It exits the region at $(w_3,s_3)$, where $s_3=1/\cm$. In order to continue to the next step, we require the admissible condition 
 \begin{equation}\label{AC2}\tag{AC2}
	w_3>\frac{\nvM}{\cm}, 	
 \end{equation}
     so that $(w_3,s_3)$ lies at the boundary of the next region $R_4$.
 \item[\circled{4}] In region $R_4=\{(w,s): 0<s<1/\cm, w>\nvM s\}$, we compare with the system (\ref{S}$^{-+}$). The curve $C_4$ is a trajectory of (\ref{S}$^{-+}$) that passes through $(w_3,s_3)$. To close the construction, we require another admissible condition such that $C_4$ reaches $(w_*, 0)$ with 
  \begin{equation}\label{AC3}\tag{AC3}
	w_*\geq0. 	
 \end{equation}
 \item[] The subcritical region $\ctsub$ is the open set enclosed by $C_1\cup C_2\cup C_3\cup C_4$ and $\{s=0\}$.
\end{itemize}

\begin{figure}[h!]
    \includegraphics{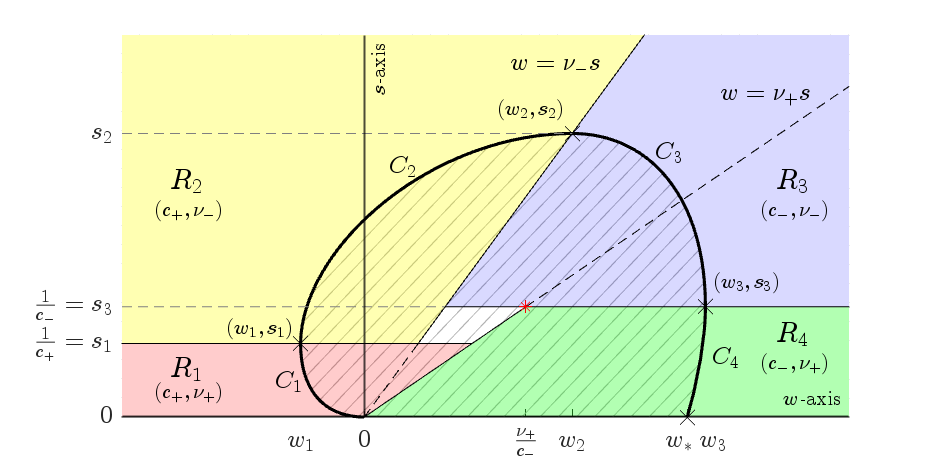}
\caption{Illustration of the subcritical region $\ctsub$ (shaded area)}\label{fig:sub}
\end{figure}

Figure \ref{fig:sub} illustrates the construction. The curves $\{C_i\}_{i=1}^4$ are trajectories of the localized systems in the corresponding regions $\{R_i\}_{i=1}^4$. More explicit expressions for these curves and the subcritical region $\ctsub$ will be provided in Section \ref{sec:Lyapunov}.

We comment on the two scenarios. Scenario I, referred to as \emph{strong alignment}, results in a large unbounded subcritical region $\ctsub$, determined by $C_1$ and $C_2$. 
In contrast, Scenario II, corresponding to \emph{median} and \emph{weak alignment}, leads to a subcritical region $\ctsub$ that is a bounded set. Moreover, the successful construction of $\ctsub$ depends on the three admissible conditions \eqref{AC1}-\eqref{AC3}. The explicit expressions for these conditions will be provided in \eqref{AC1e}-\eqref{AC3e}. For more discussion on the admissible conditions, see Remark \ref{rem:AC}.

\subsection{Supercritical region}
The construction of the supercritical region $\ctsup$ follows a similar approach to that of the subcritical region $\ctsub$. The key difference lies in applying comparison principles with different localized systems in the appropriate regions, as outlined below. For clarity, all quantities related to the supercritical region are denoted with a tilde.

\begin{itemize}  
\setlength\itemsep{.5em}
 \item[\circled{1}]	In region $\Rt_1=\{(w,s): 0<s<1/\cm, w<\nvm s\}$, we compare with the system (\ref{S}$^{--}$). The curve $\Ct_1$ is a trajectory of (\ref{S}$^{--}$) that passes through (0,0). It exits the region at $(\wt_1,\st_1)$, where $\st_1=1/\cm$.
 \item[\circled{2}] In region $\Rt_2=\{(w,s):s>1/\cm, w<\nvM s\}$, we compare with the system (\ref{S}$^{-+}$). The curve $\Ct_2$ is a trajectory of (\ref{S}$^{-+}$) that passes through $(\wt_1,\st_1)$. There are two scenarios:
 \begin{itemize}
  \item[(III).] If $\nvM\geq 2\sqrt{k\cm}$, $\Ct_2$ won't exit the region $\Rt_2$. The supercritical region $\ctsup$ consists the area in $\R\times\R_+$ to the left of the curve $\Ct_1\cup \Ct_2$.
  \item[(IV).] If $0<\nvM< 2\sqrt{k\cm}$, $\Ct_2$ exits the region at $(\wt_2,\st_2)$, where $\wt_2 = \nvM \st_2$. We continue our construction.
 \end{itemize} 
 \item[\circled{3}] In region $\Rt_3=\{(w,s):s>1/\cM, w>\nvM s\}$, we compare with the system (\ref{S}$^{++}$). The curve $C_3$ is a trajectory of (\ref{S}$^{++}$) that passes through $(\wt_2,\st_2)$. It exits the region at $(\wt_3,\st_3)$, where $\st_3=1/\cM$.
 \item[\circled{4}] In region $\Rt_4=\{(w,s): 0<s<1/\cM, w>\nvm s\}$, we compare with the system (\ref{S}$^{+-}$). The curve $\Ct_4$ is a trajectory of (\ref{S}$^{+-}$) that passes through $(\wt_3,\st_3)$. $C_4$ will reach the $w$-axis at $(\wt_*, 0)$ with $\wt_*\geq0$.  
 \item[] The supercritical region $\ctsup$ is the complement of the open set enclosed by $\Ct_1\cup \Ct_2\cup \Ct_3\cup \Ct_4$ and $\{s=0\}$.
\end{itemize}

Scenario IV is referred to as \emph{weak alignment}. As in the construction of the subcritical region $\Sigma_\flat$, the construction of the supercritical region $\Sigma_\sharp$ involves admissible conditions analogous to \eqref{AC1}-\eqref{AC3}, which take the form:
\begin{equation}\label{eq:ACsup}
 \st_2>\frac1\cM ,\quad \wt_3 > \frac{\nvm}{\cM}, \quad \text{and} \quad \wt_* \geq 0.
\end{equation}
We will show in Propositions~\ref{prop:superAC1}, ~\ref{prop:superAC2} and \ref{prop:superAC3} that the conditions in \eqref{eq:ACsup} are satisfied automatically. Therefore, no admissible conditions are required to construct $\Sigma_\sharp$. See Figure~\ref{fig:sup} for an illustration of the supercritical region $\ctsup$.

\begin{figure}[h!]
    \includegraphics{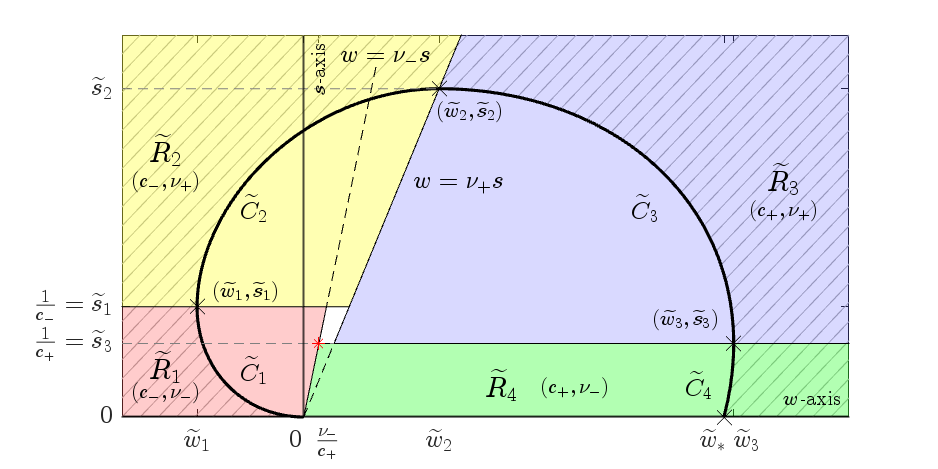}
\caption{Illustration of the supercritical region $\ctsup$ (shaded area)}\label{fig:sup}
\end{figure}

Any scenario that does not fall under strong or weak alignment is termed \emph{median alignment}, corresponding to the case where both II and III hold.

\subsection{Lyapunov functions}\label{sec:Lyapunov}
To express the critical thresholds $\ctsub$ and $\ctsup$ more explicitly, we adopt the approach introduced in \cite{bhatnagar2020critical2} for the damped Euler-Poisson equations, namely the EPA system with constant $c$ and $\nu$. This idea was later extended in \cite{choi2024critical} to accommodate variable background states by introducing a set of Lyapunov functions. These functions allow for an explicit characterization of the threshold regions and serve as a foundation for establishing the comparison principles.

Let us introduce the following type of \emph{Lyapunov functions}
\begin{equation}\label{eq:L}
	\L(w,s) = w - \nu s+\sqrt{2P(s)},	
\end{equation}
where the function $P$ satisfies the ODE:
\begin{equation}\label{eq:P}
     \frac{\dd P}{\dd s} = \nu\sqrt{2P(s)}+k(1-cs),  
\end{equation}
with an appropriately chosen initial condition to be specified later.

The following theorem establishes the connection between the Lyapunov functions and the trajectories of the localized dynamics: the trajectories coincide with the zero level sets of the Lyapunov functions.
\begin{theorem}\label{thm:levelset}
 Let $c$ and $\nu$ be constants. Let $(w(t),s(t))$ be a solution to \eqref{S} in a time interval $I$. Suppose $\L$ and $P$ satisfy \eqref{eq:L} and \eqref{eq:P} respectively, and 
 \[\textnormal{Range}\{s(t):t\in I\}\subseteq\textnormal{Dom}(P).\]
 If $\L(w(t_0),s(t_0))=0$ for some $t_0\in I$, then we must have  
 \[\L(w(t),s(t)) = 0,\quad\forall~t\in I.\]
\end{theorem}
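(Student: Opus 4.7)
The plan is to show that $\mathcal{L}(w(t), s(t))$ satisfies a linear scalar ODE of the form $\frac{\dd}{\dd t}\mathcal{L} = \lambda(t)\,\mathcal{L}$ along any trajectory of \eqref{S}, so that vanishing at one time $t_0$ forces vanishing on the whole interval $I$ by ODE uniqueness.

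First, I would differentiate $\mathcal{L}(w(t),s(t)) = w(t) - \nu s(t) + \sqrt{2P(s(t))}$ in $t$, using \eqref{S} and the ODE \eqref{eq:P}. A direct computation gives
\begin{align*}
\frac{\dd}{\dd t}\mathcal{L}(w,s)
&= w' - \nu s' + \frac{P'(s)}{\sqrt{2P(s)}}\, s' \\
&= k(1-cs) - \nu(w-\nu s) + \frac{\nu\sqrt{2P(s)} + k(1-cs)}{\sqrt{2P(s)}}\,(w-\nu s).
\end{align*}
The two terms involving $\nu(w-\nu s)$ cancel, and factoring leaves
\begin{equation*}
\frac{\dd}{\dd t}\mathcal{L}(w,s) = \frac{k(1-cs)}{\sqrt{2P(s)}}\,\bigl(w-\nu s + \sqrt{2P(s)}\bigr) = \frac{k(1-cs(t))}{\sqrt{2P(s(t))}}\,\mathcal{L}(w(t),s(t)).
\end{equation*}

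Second, I would interpret this identity as a linear homogeneous ODE for the scalar quantity $f(t) := \mathcal{L}(w(t), s(t))$, namely $f'(t) = \lambda(t) f(t)$ with $\lambda(t) = k(1-cs(t))/\sqrt{2P(s(t))}$. The hypothesis $\textnormal{Range}\{s(t):t\in I\}\subseteq \textnormal{Dom}(P)$ ensures $P(s(t))$ is well-defined along the trajectory; combined with continuity of $(w(t), s(t))$, the coefficient $\lambda$ is locally bounded on $I$ away from zeros of $P(s(\cdot))$. Hence if $f(t_0) = 0$, uniqueness of solutions to this scalar linear ODE (equivalently, Gronwall's inequality) yields $f \equiv 0$ on $I$.

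The only genuine subtlety I anticipate is handling points where $P(s(t))$ might vanish, since then $\lambda$ blows up. I would address this by noting that along a trajectory satisfying $\mathcal{L} = w - \nu s + \sqrt{2P(s)} \equiv 0$, one has $\sqrt{2P(s)} = \nu s - w$, so $P(s(t)) = 0$ would force $w = \nu s$, i.e.\ $s' = 0$. On the relatively open subset of $I$ where $P(s(t)) > 0$, the above argument applies verbatim to extend the vanishing of $\mathcal{L}$; a standard connectedness/maximality argument then propagates $\mathcal{L} \equiv 0$ across any isolated degenerate times. Thus the identity and ODE uniqueness together yield the conclusion.
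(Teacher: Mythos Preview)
Your proposal is correct and follows essentially the same approach as the paper: differentiate $\mathcal{L}(w(t),s(t))$ along the trajectory to obtain the linear scalar ODE $\frac{\dd}{\dd t}\mathcal{L} = \frac{k(1-cs)}{\sqrt{2P(s)}}\,\mathcal{L}$, then conclude $\mathcal{L}\equiv 0$ from the initial vanishing. Your extra discussion of the degenerate case $P(s(t))=0$ is more careful than the paper's own argument, which simply writes down the integrating-factor formula without commenting on this point.
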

\begin{proof}
  Apply \eqref{S}, \eqref{eq:L}, \eqref{eq:P} and compute:
  \begin{align*}
    \frac{\dd}{\dd t}\L(w(t),s(t)) & = w'(t)-\nu s'(t) + \frac{1}{\sqrt{2P(s)}}\cdot \frac{\dd P}{\dd s}\cdot s'(t)\\
	& = k\big(1-cs(t)\big) - \left(\nu-\frac{\nu\sqrt{2P(s(t))}+k\big(1-cs(t)\big)}{\sqrt{2P(s(t))}}\right)\big(w(t)-\nu s(t)\big)\\
	& = \frac{k\big(1-cs(t)\big)}{\sqrt{2P(s(t))}}\,\L(w(t),s(t)).
  \end{align*}
  This implies
  \[\L(w(t),s(t)) = \L(w(t_0),s(t_0))\int_{t_0}^t\frac{k\big(1-cs(\tau)\big)}{\sqrt{2P(s(\tau))}}\,\dd\tau=0.\]
\end{proof}

Another family of Lyapunov functions satisfying the same property in Theorem \ref{thm:levelset} is 
\begin{equation}\label{eq:L2}
	\L(w,s) = w - \nu s - \sqrt{2N(s)},	
\end{equation}
where the function $N$ satisfies the ODE:
\begin{equation}\label{eq:N}
     \frac{\dd N}{\dd s} = - \nu\sqrt{2N(s)}+k(1-cs). 
\end{equation}

We will employ these Lyapunov functions for the localized systems (\ref{S}$^{\pm\pm}$), using the notations $\L^{\pm\pm}$, $P^{\pm\pm}$ and $N^{\pm\pm}$ to denote the Lyapunov function associated with the parameters $(c_\pm, \nu_\pm)$. For example,
\[\L^{+-}(w,s) = w - \nvm s+\sqrt{2P^{+-}(s)}, \quad\text{and}\quad
\frac{\dd P^{+-}}{\dd s} = \nvm\sqrt{2P^{+-}(s)}+k(1-\cM s).\]

Now, we are ready to use the Lyapunov functions to characterize the threshold regions. To proceed, we construct the Lyapunov functions region by region, selecting appropriate initial conditions for the function $P$ or $N$ to ensure that the trajectory forms a zero level set of $\L$. We denote the corresponding Lyapunov functions and functions $P, N$ in the four regions using subscripts $1$ through $4$.

For the subcritical region:
\begin{itemize}  
\setlength\itemsep{.5em}
 \item[\circled{1}]	$C_1$ can be represented as the level set $\L_1^{++}(w,s)=0$, with $\L$ defined in \eqref{eq:L}, namely
\[
 C_1=\left\{(w,s):w=\nvM s-\sqrt{2P_1^{++}(s)},~~s\in[0,s_1]\right\}.
\]
The function $P_1^{++}$ satisfies
\begin{equation}\label{aux_P_1}
   \frac{\dd P^{++}_1}{\dd s} = \nvM\sqrt{2P_1^{++}(s)}+k(1-\cM s),\quad P_1^{++}(0) = 0,
\end{equation}
where the initial condition is chosen so that $\L_1^{++}(0,0)=\sqrt{2P^{++}_1(0)}=0$. The exit point is
\[
(w_1,s_1)=\left(\tfrac{\nvM}{\cM}-\sqrt{2P_1^{++}(\tfrac{1}{\cM})},~~ \tfrac{1}{\cM}\right).
\]
 \item[\circled{2}] $C_2$ can be represented as the level set $\L_2^{+-}(w,s)=0$, with $\L$ defined in \eqref{eq:L}, namely
\[
 C_2=\left\{(w,s):w=\nvm s-\sqrt{2P_2^{+-}(s)},~~s\in[s_1,s_2]\right\}.
\]
The function $P_2^{+-}$ satisfies
\begin{equation}\label{aux_P_2}
   \frac{\dd P_2^{+-}}{\dd s} = \nvm\sqrt{2P_2^{+-}(s)}+k(1-\cM s),\quad P_2^{+-}(s_1) = \frac12(w_1-\nvm s_1)^2, 
\end{equation}
where the initial condition is chosen so that $\L_2^{+-}(w_1,s_1)=0$.

We will show in Lemmas \ref{lem:P2strong} and \ref{lem:P2weak} that the lifespan of $P_2^{+-}$ in \eqref{aux_P_2} is
\begin{equation}\label{dom:P2pm}
	\textnormal{Dom}(P_2^{+-}) = \begin{cases}
	[s_1,\infty) & \nvm \ge 2\sqrt{k\cM},\\
	[s_1,s_2] & 0 < \nvm < 2\sqrt{k\cM}.
	\end{cases}
\end{equation}
The two cases correspond to Scenarios I and II, respectively. In the latter case, the explicit expression of the exit point $s_2$ is given in \eqref{eq:s2}, and correspondingly 
\[w_2 = \nvm s_2 - \sqrt{2P_2^{+-}(s_2)}=\nvm s_2.\]

\item[\circled{3}] $C_3$ can be represented as the level set $\L_3^{--}(w,s)=0$, with $\L$ defined in \eqref{eq:L2}, namely
\[
 C_3=\left\{(w,s):w=\nvm s+\sqrt{2N_3^{--}(s)},~~s\in[s_3,s_2]\right\}.
\]
The function $N_3^{--}$ satisfies
\begin{equation}\label{aux_N_3}
   \frac{\dd N_3^{--}}{\dd s} = -\nvm\sqrt{2N_3^{--}(s)}+k(1-\cm s),\quad N_3^{--}(s_2) = 0,
\end{equation}
where the initial condition is chosen so that $\L_3^{--}(w_2,s_2)=-\sqrt{2N_3^{--}(s_2)}=0$. $N_3^{--}$ propagates backward in $s$. The exit point 
\[
(w_3,s_3)=\left(\tfrac{\nvm}{\cm}+\sqrt{2N_3^{--}(\tfrac{1}{\cm})},~~ \tfrac{1}{\cm}\right).
\]
 
 \item[\circled{4}] $C_4$ can be represented as the level set $\L_4^{-+}(w,s)=0$, with $\L$ defined in \eqref{eq:L2}, namely
\[
 C_4=\left\{(w,s):w=\nvM s+\sqrt{2N_4^{-+}(s)},~~s\in[0,s_3]\right\}.
\]
The function $N_4^{-+}$ satisfies
\begin{equation}\label{aux_N_4}
   \frac{\dd N_4^{-+}}{\dd s} = -\nvM\sqrt{2N_4^{-+}(s)}+k(1-\cm s),\quad N_4^{-+}(s_3) = \frac12(w_3-\nvM s_3)^2,
\end{equation}
where the initial condition is chosen so that $\L_4^{--}(w_3,s_3)=0$. $N_4^{-+}$ propagates backward in $s$. Its lifespan is
\begin{equation}\label{dom:P4mp}
\textnormal{Dom}(N_4^{-+}) = \begin{cases}
(-\infty,s_3] \ \ \ \nvM \ge 2\sqrt{k\cm},\\
[s_4,s_3] \ \ \  0 < \nvM < 2\sqrt{k\cm}.
\end{cases}
\end{equation}
See Lemmas \ref{lem:N4strong} and \ref{lem:N4weak} for the proof.
The admissible condition \eqref{AC3} is equivalent to $s_4\leq0$ (see Lemma \ref{lem:AC3}).
\end{itemize}

We assemble the four regions together and express the subcritical region $\ctsub$ using the functions $P$ and $N$.
\begin{align*}
 \textnormal{Senario I:}\quad & \ctsub = \big\{(w,s):w > W_\ell(s),\,\, s>0\big\},\\
 \textnormal{Senario II:}\quad & \ctsub = \big\{(w,s): W_\ell(s) < w < W_r(s),\,\, 0<s<s_2\big\},	
\end{align*}
where
\begin{align*}
 W_\ell(s):=\begin{cases}
\nvM s-\sqrt{2P_1^{++}(s)}, & 0 < s \le \frac{1}{\cM},\\
\nvm s-\sqrt{2P_2^{+-}(s)}, & s > \frac{1}{\cM},
\end{cases}\\
 W_r(s):=\begin{cases}
\nvM s+\sqrt{2N_4^{-+}(s)}, & 0 < s \le \frac{1}{\cm},\\
\nvm s+\sqrt{2N_3^{--}(s)}, & s > \frac{1}{\cm}.
\end{cases}
\end{align*}

The subcritical region $\CTsub$ in the $(G,\rho)$ space can be explicitly derived by employing the relation \eqref{eq:ws}.

\begin{theorem}[{\bf Global regularity}]\label{thm:sub}
Consider the EPA system \eqref{eq:main} under the assumptions in Theorem \ref{thm:LWP}. Suppose the initial data $(\rho_0, G_0)$ lies within the following subcritical regions:
\begin{itemize}
    \item \textnormal{Strong alignment $(\nvm \geq 2\sqrt{k\cM})$:} for any $x\in\T$, $(G_0(x),\rho_0(x))$ satisfies
	\[
	G_0(x) > \begin{cases}
 	\nvm-\rho_0(x)\sqrt{2P_2^{+-}\left(\tfrac{1}{\rho_0(x)}\right)},& \text{if}\,\,\, 0<\rho_0(x)<\cM,\\
 	\nvM-\rho_0(x)\sqrt{2P_1^{++}\left(\tfrac{1}{\rho_0(x)}\right)},& \text{if}\,\,\, \rho_0(x)\ge\cM.
 \end{cases}
    \]
    \item \textnormal{Weak or median alignment $(\nvm < 2\sqrt{k\cM})$:} under admissible conditions \eqref{AC1}-\eqref{AC3}, for any $x\in\T$, we require 
    \[\rho_0(x)>\tfrac{1}{s_2},\]
    and $(G_0(x),\rho_0(x))$ satisfies
    \begin{align*}
        &G_0(x) > \begin{cases}
 	\nvm-\rho_0(x)\sqrt{2P_2^{+-}\left(\tfrac{1}{\rho_0(x)}\right)},& \text{if}\,\,\, \tfrac{1}{s_2}<\rho_0(x)<\cM,\\
 	\nvM-\rho_0(x)\sqrt{2P_1^{++}\left(\tfrac{1}{\rho_0(x)}\right)},& \text{if}\,\,\, \rho_0(x)\geq\cM,
 \end{cases} \quad \text{and} \\
        &G_0(x) < \begin{cases}
 	\nvm+\rho_0(x)\sqrt{2N_3^{--}\left(\tfrac{1}{\rho_0(x)}\right)},& \text{if}\,\,\, \tfrac{1}{s_2}<\rho_0(x)<\cm,\\
 	\nvM+\rho_0(x)\sqrt{2N_4^{-+}\left(\tfrac{1}{\rho_0(x)}\right)},& \text{if}\,\,\, \rho_0(x)\geq\cm.
 \end{cases}
   \end{align*}
\end{itemize}
Then the solution $(\rho, G)$ remain bounded in all time. Consequently, the solution $(\rho,G)$ is globally regular, in the sense of \eqref{eq:regrhoG}.
\end{theorem}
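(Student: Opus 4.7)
The plan is to invoke the continuation criterion \eqref{eq:BKM} from Theorem~\ref{thm:LWP}, which reduces global regularity to proving uniform-in-$x$ upper bounds for $\rho$ and $G$ on every finite interval $[0,T]$. Passing to the reformed variables \eqref{eq:ws}, this amounts to showing that along every characteristic $X(t;x)$, the trajectory $(w(t),s(t))$ of \eqref{S} remains inside the reformed subcritical region $\ctsub$ for all time. Because $\rho_0,G_0\in H^s\hookrightarrow L^\infty$ with $\rho_0>0$, the image of the initial data under \eqref{eq:ws} sits in a compact subset of $\ctsub$; invariance then provides a pointwise bound $s(t)\ge\delta(T)>0$ and $|w(t)|\le M(T)$ on $[0,T]$, yielding the required bounds on $\rho=1/s$ and $G=w/s$.

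The core step is therefore the invariance of $\ctsub$ under \eqref{S}. Its boundary is assembled from the arcs $C_1,C_2,C_3,C_4$, each realized via Theorem~\ref{thm:levelset} as the zero level set of a Lyapunov function $\L_i^{\pm\pm}$ adapted to a localized system (\ref{S}$^{\pm\pm}$). To show that a trajectory in region $R_i$ cannot exit through $C_i$, I would differentiate $\L_i^{\pm\pm}(w(t),s(t))$ along the full dynamics \eqref{S}, repeating the calculation of Theorem~\ref{thm:levelset} but with the actual $c=c(X(t),t)$ and $\nu=\psi\ast\rho(X(t),t)$ in place of the constants $c_\pm,\nu_\pm$. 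One isolates a residual term, and using the bounds \eqref{eq:nuc} together with the defining inequality of $R_i$ (which fixes the sign of $1-c_\pm s$), this residual has the correct sign to push $\L_i^{\pm\pm}$ away from zero into the interior of $\ctsub$. Applying this region by region delivers the barrier property for each arc.

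The main obstacle is the oscillatory nature of the dynamics: a trajectory may traverse several of the regions $R_1,\ldots,R_4$ in succession, so the four barrier arguments must be chained in the proper order. Each transition must occur across an internal portion of $\partial R_i$ rather than across the outer arc $C_i$, and the admissible conditions \eqref{AC1}--\eqref{AC3} are precisely what ensure that the four arcs close consistently so these transitions are well defined. In the strong-alignment case $\nvm\ge 2\sqrt{k\cM}$, the lifespan \eqref{dom:P2pm} forces $C_2$ to remain inside $R_2$ for all $s$, so only the arcs $C_1,C_2$ appear and the chaining collapses to two arcs; the weak/median case engages all four. The detailed chaining and the comparison principles that implement it are developed in Section~\ref{sec:proof}, which I would invoke directly.

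Combining these ingredients and translating back through \eqref{eq:ws} yields $\|\rho(\cdot,t)\|_{L^\infty}+\|G(\cdot,t)\|_{L^\infty}\le C(T)$ on every $[0,T]$, so \eqref{eq:BKM} holds for arbitrary $T$ and the solution extends globally with the regularity \eqref{eq:regrhoG}.
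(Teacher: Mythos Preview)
Your proposal is correct and follows essentially the same route as the paper: establish invariance of $\ctsub$ for the $(w,s)$ dynamics via Lyapunov-function barriers in each region $R_i$ (the paper's Propositions~\ref{prop:sCP} and~\ref{prop:sub}), chain these across the oscillatory transitions, and then feed the resulting $L^\infty$ bounds on $\rho=1/s$ and $G=w/s$ into the continuation criterion \eqref{eq:BKM}. Your remark that the initial data lie in a compact subset of $\ctsub$ (by $H^s\hookrightarrow L^\infty$ and $\rho_0>0$) is a useful addition that makes the extraction of a uniform lower bound on $s$ more explicit than the paper's own wording.
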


\begin{remark}[Admissible conditions]\label{rem:AC}
  In weak or median alignment cases, admissible conditions \eqref{AC1}-\eqref{AC3} are needed in order to construct the subcritical region. We state some properties of these conditions. See Remarks \ref{rem:AC1}, \ref{rem:AC2} and \ref{rem:AC3} for detailed discussions.
  \begin{enumerate}
   \item Among the three admissible conditions, \eqref{AC3} is the strongest: it implies \eqref{AC2}, which in turn implies \eqref{AC1}.
   \item For median alignment, the admissible condition \eqref{AC2} is sufficient. For weak alignment, the stronger condition \eqref{AC3} is required. 
   \item When $c$ is constant, \eqref{AC1} is automatically satisfied. In this case, our admissible conditions \eqref{AC2} and \eqref{AC3} coincide with those in \cite[Theorem 2.2]{bhatnagar2023critical} for the median and weak alignment scenarios, respectively.
   \item When $\nu$ is constant, \eqref{AC2} holds automatically provided that \eqref{AC1} is satisfied. In this case, our admissible conditions \eqref{AC1} and \eqref{AC3} reduce to the closing conditions in \cite[Theorem 1.9]{choi2024critical} for their cases \#2.1 and \#2.2, respectively.
  \end{enumerate}
\end{remark}

For the supercritical region, a similar argument would lead to the description of the supercritical region $\ctsup$ as follows.
\begin{align*}
 \textnormal{Senario III:}\quad & \ctsup = \big\{(w,s):w \leq \widetilde{W}_\ell(s),\,\, s>0\big\},\\
 \textnormal{Senario IV:}\quad & \ctsup = (\R\times\R_+)\backslash\big\{(w,s): \widetilde{W}_\ell(s) < w < \widetilde{W}_r(s),\,\, 0<s<\st_2\big\},	
\end{align*}
where
\begin{align*}
 \widetilde{W}_\ell(s):= & \begin{cases}
\nvm s-\sqrt{2\Pt_1^{--}(s)}, & 0 < s \le \frac{1}{\cm},\\
\nvM s-\sqrt{2\Pt_2^{-+}(s)}, & s > \frac{1}{\cm},
\end{cases}\\
 \widetilde{W}_r(s):= & \begin{cases}
\nvm s+\sqrt{2\Nt_4^{+-}(s)}, & 0 < s \le \frac{1}{\cM},\\
\nvM s+\sqrt{2\Nt_3^{++}(s)}, & s > \frac{1}{\cM},
\end{cases}
\end{align*}
and the functions $\Pt$ and $\Nt$ satisfy
\begin{align}
   & \frac{\dd \Pt^{--}_1}{\dd s} = \nvm\sqrt{2\Pt_1^{--}(s)}+k(1-\cm s),\quad \Pt_1^{--}(0) = 0,\\
   & \frac{\dd \Pt_2^{-+}}{\dd s} = \nvM\sqrt{2P_2^{-+}(s)}+k(1-\cm s),\quad \Pt_2^{+-}(\st_1) = \frac12(\wt_1-\nvM \st_1)^2, \\
   & \frac{\dd \Nt_3^{++}}{\dd s} = -\nvM\sqrt{2\Nt_3^{++}(s)}+k(1-\cM s),\quad \Nt_3^{++}(\st_2) = 0,\\
   & \frac{\dd \Nt_4^{+-}}{\dd s} = -\nvm\sqrt{2\Nt_4^{+-}(s)}+k(1-\cM s),\quad \Nt_4^{+-}(\st_3) = \frac12(\wt_3-\nvm \st_3)^2.\label{eq:Nt4}
\end{align}

\begin{theorem}[{\bf Finite-time blowup}]\label{thm:sup}
Consider the EPA system \eqref{eq:main} under the assumptions in Theorem \ref{thm:LWP}. Suppose the initial data $(\rho_0, G_0)$ lies within the following supercritical regions:
\begin{itemize}
    \item \textnormal{Strong and median alignment $(\nvm \geq 2\sqrt{k\cM})$:} there exists $x_*\in\T$, such that $(G_0(x_*),\rho_0(x_*))$ satisfies
	\[
	G_0(x_*) \leq\begin{cases}
 	\nvM-\rho_0(x_*)\sqrt{2\Pt_2^{-+}\left(\tfrac{1}{\rho_0(x_*)}\right)},& \text{if}\,\,\, 0<\rho_0(x_*)<\cm,\\
 	\nvm-\rho_0(x_*)\sqrt{2\Pt_1^{--}\left(\tfrac{1}{\rho_0(x_*)}\right)},& \text{if}\,\,\, \rho_0(x_*)\ge\cm.
 \end{cases}
    \]
    \item \textnormal{Weak alignment $(\nvm < 2\sqrt{k\cM})$:} there exists $x_*\in\T$, such that
    \[\rho_0(x_*)\leq \tfrac{1}{\st_2},\]
	or $(G_0(x_*),\rho_0(x_*))$ satisfies
    \begin{align*}
        &G_0(x_*) \leq \begin{cases}
 	\nvM-\rho_0(x_*)\sqrt{2\Pt_2^{-+}\left(\tfrac{1}{\rho_0(x_*)}\right)},& \text{if}\,\,\, \tfrac{1}{\st_2}<\rho_0(x_*)<\cm,\\
 	\nvm-\rho_0(x_*)\sqrt{2\Pt_1^{--}\left(\tfrac{1}{\rho_0(x_*)}\right)},& \text{if}\,\,\, \rho_0(x_*)\geq\cm,
 \end{cases} \quad \text{or} \\
        &G_0(x_*) \geq \begin{cases}
 	\nvM+\rho_0(x_*)\sqrt{2\Nt_3^{++}\left(\tfrac{1}{\rho_0(x_*)}\right)},& \text{if}\,\,\, \tfrac{1}{\st_2}<\rho_0(x_*)<\cM,\\
 	\nvm+\rho_0(x_*)\sqrt{2\Nt_4^{+-}\left(\tfrac{1}{\rho_0(x_*)}\right)},& \text{if}\,\,\, \rho_0(x_*)\geq\cM.
 \end{cases}
   \end{align*}
\end{itemize}
Then the solution along the characteristic path $(\rho(X(t; x_*),t), G(X(t; x_*),t))$ becomes unbounded in finite time. Consequently, the solution generates a singular shock at a finite time $T_*$, in the sense of \eqref{eq:singularshock}.
\end{theorem}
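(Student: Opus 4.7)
The plan is to work in the $(w, s)$ reformulation \eqref{eq:ws}--\eqref{S}, where the claim reduces to showing that along the characteristic emanating from $x_*$, the quantity $s(t)$ reaches $0$ at some finite time $T_*$. By the relation $s = 1/\rho$, this yields $\rho \to \infty$ as $t \to T_*^-$; combined with a sign bound $w(T_*) < 0$ coming out of the Lyapunov analysis, the identity $G = w/s$ together with \eqref{eq:G} delivers the full singular shock profile \eqref{eq:singularshock}.

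The central tool is the family of comparison principles built on the Lyapunov functions of Section \ref{sec:Lyapunov}. For each of the four phase-plane regions $\Rt_i$ ($i = 1, \ldots, 4$), let $\L_i$ denote the Lyapunov function whose zero level set is the barrier curve $\Ct_i$, given by \eqref{eq:L} for $i = 1, 2$ and by \eqref{eq:L2} for $i = 3, 4$, with the associated $\Pt_i$ or $\Nt_i$ defined in the construction of $\ctsup$. Following the computation in the proof of Theorem \ref{thm:levelset}, but now with \emph{variable} $c(x,t)$ and $\nu(x,t)$ in \eqref{S}, we recover an identity of the form $\frac{d}{dt}\L_i(w(t),s(t)) = A_i(s(t))\,\L_i(w(t),s(t)) + \mathcal{E}_i(t)$, where the error term $\mathcal{E}_i$ collects the mismatches $c_\pm - c(x,t)$ and $\nu_\pm - \nu(x,t)$. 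The key point is that the tailored choice of $(c_\pm, \nu_\pm)$ in each $\Rt_i$, together with the fixed sign of $1 - c_\pm s$ inside $\Rt_i$ and the pointwise bounds \eqref{eq:nuc}, forces $\mathcal{E}_i$ to have a consistent sign: non-positive in $\Rt_1, \Rt_2$ and non-negative in $\Rt_3, \Rt_4$. Consequently, if $\L_i(w_0, s_0) \le 0$ in $\Rt_1, \Rt_2$ (respectively $\L_i(w_0, s_0) \ge 0$ in $\Rt_3, \Rt_4$), this sign is preserved along the characteristic as long as $(w(t), s(t))$ remains in $\Rt_i$.

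Starting from $(w_0, s_0) \in \ctsup$, the initial point lies either to the left of $\Ct_1 \cup \Ct_2$ or to the right of $\Ct_3 \cup \Ct_4$. Applying the comparison principles above region by region, and chaining them across transitions between the $\Rt_i$'s, the characteristic solution stays on the same side of the enclosed open set throughout its evolution, hence remains in $\ctsup$. For finite-time blowup of $s$, we exploit that in this trapped regime $w(t)$ eventually becomes and stays strictly negative, bounded away from zero by a uniform $\delta > 0$, as the orbit approaches the $s = 0$ axis from the supercritical side; the second equation of \eqref{S} then gives $s'(t) \le -\delta/2$ for small enough $s$, yielding a linear decay of $s$ to $0$ at a finite time $T_*$.

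The main obstacle is the phase-plane bookkeeping across the four regions. One must verify that the admissibility conditions \eqref{eq:ACsup} for the supercritical construction hold unconditionally (Propositions \ref{prop:superAC1}--\ref{prop:superAC3}), so that $\Ct_1, \ldots, \Ct_4$ actually close into a single continuous barrier together with $\{s = 0\}$, and that the pairing of localized parameters $(c_\pm, \nu_\pm)$ with each $\Rt_i$ is indeed the unique choice that makes the Lyapunov inequality point in the direction required to trap the orbit in $\ctsup$. The oscillatory dynamics threaten to drive the orbit across region boundaries in unanticipated ways, and ensuring that the four comparison principles chain together consistently is the delicate piece that sits at the heart of the argument.
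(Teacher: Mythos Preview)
Your overall strategy---pass to $(w,s)$, use the Lyapunov comparison principles to show $\ctsup$ is invariant, then deduce finite-time collapse of $s$---is the paper's strategy, and your sketch of the comparison principles (the signed error $\mathcal{E}_i$ argument) is correct and matches Proposition~\ref{prop:wCP}.

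The genuine gap is in your finite-time blowup step. You assert that once trapped in $\ctsup$, ``$w(t)$ eventually becomes and stays strictly negative, bounded away from zero by a uniform $\delta>0$'', and then integrate $s' = w - \nu s \le -\delta/2$. This is false near the axis: in the region $\{w\le 0,\ 0<s<1/(2\cM)\}$ one has $w' = k(1-cs) \ge k/2 > 0$, so $w$ is \emph{increasing} toward $0$, not staying below $-\delta$. Moreover the barrier $\Ct_1$ passes through the origin, so the invariance of $\ctsup$ gives no uniform negative upper bound on $w$ as $s\to 0$. Your linear-decay argument for $s$ therefore does not close.

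The paper's fix is a four-area decomposition of $\ctsup$ (Proposition~4.5): in each area one identifies a monotone coordinate with a quantitative speed bound ($s'\ge v_s>0$ in $A_1$, $w'\le -v_w<0$ in $A_2$, $s'\le -\nvm/(2\cM)$ in $A_3$, $w'\ge k/2$ in $A_4$), so the orbit cycles $A_1\to A_2\to A_3\to A_4$ in finite time. In the terminal area $A_4$ the argument is \emph{not} that $s'$ is uniformly negative, but rather that $w$ would reach $0$ within time $-2w_0/k$; since the orbit cannot cross $\Ct_1$ (invariance) and cannot climb back to $A_3$ (because $s'<0$ there), the only exit is through $\{s=0,\ w\le 0\}$, which must occur before that deadline. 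You should replace your $\delta$-argument with this mechanism.
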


Figure \ref{fig:ct} illustrates the subcritical region $\Sigma_\flat$ and the supercritical region $\Sigma_\sharp$ under three different alignment strengths: weak, median, and strong. These regions are mapped to their counterparts in Figure \ref{fig:CT} via the transformation given in \eqref{eq:ws}.
\begin{figure}[h!]
\centering
\includegraphics{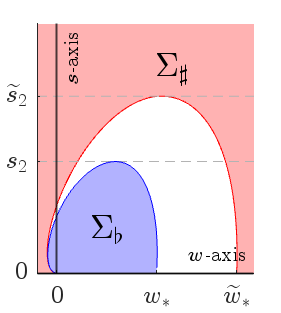}	
\includegraphics{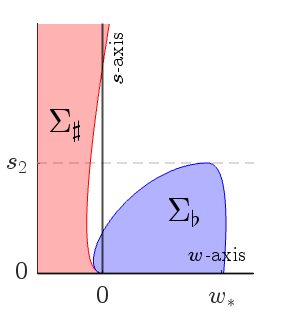}	
\includegraphics{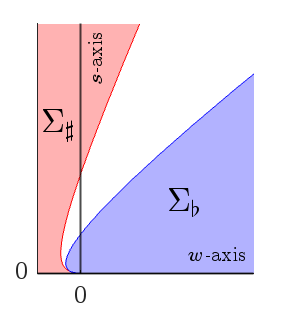}
\caption{Illustration of the critical thresholds $\ctsub$ and $\ctsup$. Left: weak alignment; middle: median alignment; right: strong alignment.}\label{fig:ct}
\end{figure}

\section{Proof of main results}\label{sec:proof}
This section is devoted to proving our main critical threshold results, Theorems \ref{thm:sub} and \ref{thm:sup}, using Lyapunov functions effectively.

\subsection{Comparison principles}\label{sec:comparison}
The main analytical tool for proving the critical thresholds is the \emph{comparison principle}. Here, we present a version of the comparison principle based on the Lyapunov functions $\L$ introduced in \eqref{eq:L}. Compared to the classical approach of comparing trajectories $(w(t),s(t))$ with those of the localized system, as used in \cite{bhatnagar2020critical, bhatnagar2023critical}, the Lyapunov function method provides a more elegant and direct comparison. This approach has been successfully applied in \cite{choi2024critical} to the damped Euler-Poisson equation with variable background. Here, we extend this idea to the general EPA system \eqref{eq:main}.

First, we establish a set of weak comparison principles for analyzing the supercritical threshold region $\ctsup$.

\begin{proposition}[Weak comparison principles]\label{prop:wCP}
Let $(w(t),s(t))$ be a solution to the system \eqref{S}. Denote by $C$ the trajectory in the $(w,s)$-plane over the time interval $(t_0,t_1)$, namely
\begin{equation}\label{eq:C}
C:=\big\{(w(t),s(t)):t\in (t_0,t_1)\big\}.	
\end{equation}
Then, the following results hold:
\begin{itemize}
\item[\circled{1}] Suppose $C\subseteq \Rt_1$. Then
	\begin{equation}\label{eq:sup1}
    \Lt_1^{--}(w(t_0),s(t_0)) \leq 0 \quad\textnormal{implies}\quad \Lt_1^{--}(w(t),s(t)) \leq 0, \quad\forall~t\in[t_0,t_1].
	\end{equation}
\item[\circled{2}] Suppose $C\subseteq \Rt_2$ in \textnormal{Senario III}, or $C\subseteq \Rt_2\,\cap\,\{(w,s):s\leq \st_2\}$ in \textnormal{Senario IV}. Then
	\begin{equation}\label{eq:sup2}
    \Lt_2^{-+}(w(t_0),s(t_0)) \leq 0 \quad\textnormal{implies}\quad \Lt_2^{-+}(w(t),s(t)) \leq 0, \quad\forall~t\in[t_0,t_1].
	\end{equation}
\item[\circled{3}] In \textnormal{Senario IV}, suppose $C\subseteq \Rt_3\,\cap\,\{(w,s):s\leq \st_2\}$. Then
	\begin{equation}\label{eq:sup3}
    \Lt_3^{++}(w(t_0),s(t_0)) \geq 0 \quad\textnormal{implies}\quad \Lt_3^{++}(w(t),s(t)) \geq 0, \quad\forall~t\in[t_0,t_1].
	\end{equation}
\item[\circled{4}] In \textnormal{Senario IV}, suppose $C\subseteq \Rt_4$. Then
	\begin{equation}\label{eq:sup4}
    \Lt_4^{+-}(w(t_0),s(t_0)) \geq 0 \quad\textnormal{implies}\quad \Lt_4^{+-}(w(t),s(t)) \geq 0, \quad\forall~t\in[t_0,t_1].
	\end{equation}
\end{itemize}
\end{proposition}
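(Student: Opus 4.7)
The plan is to adapt the chain-rule calculation in the proof of Theorem~\ref{thm:levelset} to the setting where $c$ and $\nu$ are \emph{variable} but pinched between $\cm,\cM$ and $\nvm,\nvM$ by \eqref{eq:nuc}. Namely, I would differentiate each Lyapunov function $\Lt_i^{\pm\pm}$ along a solution of the true system \eqref{S}, whose coefficients $(c,\nu)$ actually evolve in time, and compare the outcome with the exact identity that Theorem~\ref{thm:levelset} would produce if $(c,\nu)$ were frozen at the constants built into $\Lt_i^{\pm\pm}$. The discrepancy between the two gives a source term whose sign is controlled by the region in the phase plane.

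\textbf{Key computation.} Repeating the proof of Theorem~\ref{thm:levelset} but keeping $(c,\nu)$ variable, and replacing $w-\nu s$ through the definition $w = \Lt + \bar\nu s - \sqrt{2\Pt}$ (where $(\bar c,\bar\nu)\in\{\cm,\cM\}\times\{\nvm,\nvM\}$ are the constants inside $\Pt$), one arrives at
\begin{equation*}
    \frac{\dd}{\dd t}\Lt(w(t),s(t)) \;=\; \alpha(t)\,\Lt(w(t),s(t))\;+\;\beta(t),
\end{equation*}
with
\begin{equation*}
    \alpha(t) \;=\; \frac{k\bigl(1-\bar c\, s(t)\bigr)}{\sqrt{2\Pt(s(t))}},\qquad
    \beta(t) \;=\; k\bigl(\bar c-c\bigr)\,s \;+\; \frac{k\bigl(1-\bar c s\bigr)\bigl(\bar\nu-\nu\bigr)\,s}{\sqrt{2\Pt(s(t))}}.
\end{equation*}
An entirely analogous identity, with $\sqrt{2\Nt}$ in place of $\sqrt{2\Pt}$ and signs flipped according to \eqref{eq:L2}--\eqref{eq:N}, governs the $\Lt_3^{++}$ and $\Lt_4^{+-}$ families; the source term there is $\beta(t)=k(\bar c-c)s-\frac{k(1-\bar c s)(\bar\nu-\nu)s}{\sqrt{2\Nt(s(t))}}$.

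\textbf{Sign analysis and Gronwall step.} Next I would check the sign of $\beta(t)$ in each of the four regions using \eqref{eq:nuc} together with the sign of $1-\bar c s$ dictated by the region. For example, in $\Rt_1$ one has $\bar c=\cm\leq c$, $\bar\nu=\nvm\leq\nu$, and $1-\cm s>0$, forcing $\beta\leq 0$; in $\Rt_2$, $1-\cm s<0$ but $\bar\nu=\nvM\geq\nu$, and once more $\beta\leq 0$. Symmetric checks in $\Rt_3$ and $\Rt_4$ (now using the $\Nt$-identity) give $\beta\geq 0$. Consequently the differential inequality becomes $\frac{\dd}{\dd t}\Lt\leq\alpha\Lt$ in cases \circled{1}--\circled{2} and $\frac{\dd}{\dd t}\Lt\geq\alpha\Lt$ in cases \circled{3}--\circled{4}. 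A standard integrating-factor argument applied to $\Lt(t)\exp\bigl(-\int_{t_0}^t\alpha\bigr)$ then propagates the required sign of $\Lt$ from $t_0$ to every $t\in[t_0,t_1]$.

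\textbf{Main obstacle.} The principal subtlety is the integrability of $\alpha$: the denominators $\sqrt{2\Pt(s)}$ and $\sqrt{2\Nt(s)}$ vanish at the distinguished endpoints used to initialize each trajectory $\Ct_i$ (for example $\Pt_1^{--}(0)=0$ and $\Nt_3^{++}(\st_2)=0$). One must verify that when the actual trajectory approaches such a zero, it does so at a finite rate in time. This follows because $s'=w-\nu s$ is bounded on the compact piece of phase plane under consideration and, from the defining ODEs \eqref{eq:P}--\eqref{eq:N}, $\Pt$ and $\Nt$ vanish linearly in $s$ at the relevant points; hence $\alpha(t)$ has at worst a $|t-t_*|^{-1/2}$ singularity and is locally integrable. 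The restriction $s\leq\st_2$ imposed in Scenario~IV of cases \circled{2}--\circled{3} is precisely what keeps $\Pt_2^{-+}$ and $\Nt_3^{++}$ in their domains of definition throughout the trajectory, making this integrating-factor step rigorous.
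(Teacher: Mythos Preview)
Your proposal is correct and follows essentially the same approach as the paper: differentiate the Lyapunov function along the trajectory, split the result as $\alpha\,\Lt+\beta$, check that the remainder $\beta$ has the correct sign in each region $\Rt_i$ using \eqref{eq:nuc} and the sign of $1-\bar c\,s$, and then apply a Gr\"onwall/integrating-factor argument. Your expression for $\beta$ (with the $\sqrt{2\Pt}$ or $\sqrt{2\Nt}$ in the denominator of the $(\bar\nu-\nu)$-term) is in fact the accurate one, and your discussion of the integrability of $\alpha$ near the distinguished endpoints is an extra bit of care the paper does not spell out.
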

\begin{proof}
We start with the proof of \eqref{eq:sub1}. Compute
  \begin{align*}
    & \frac{\dd}{\dd t}\Lt_1^{--}(w(t),s(t)) = w'(t)-\nvm s'(t) + \frac{1}{\sqrt{2\Pt_1^{--}(s)}}\cdot \frac{\dd \Pt_1^{--}}{\dd s}\cdot s'(t)\\
	& = k\big(1-c(t)s(t)\big) - \left(\nvm-\frac{\nvm\sqrt{2\Pt_1^{--}(s(t))}+k\big(1-\cm s(t)\big)}{\sqrt{2\Pt_1^{--}(s(t))}}\right)\big(w(t)-\nu s(t)\big)\\
	& = \frac{k\big(1-\cm s(t)\big)}{\sqrt{2\Pt_1^{--}(s(t))}}\,\Lt_1^{--}(w(t),s(t)) - k\big(c(t)-\cm\big)s(t)-k\big(\nu(t)-\nvm\big)\big(1-\cm s(t)\big)s(t)\\
	& \leq \frac{k\big(1-\cm s(t)\big)}{\sqrt{2\Pt_1^{--}(s(t))}}\,\Lt_1^{--}(w(t),s(t)).
  \end{align*}
In the last inequality, we have used the fact $0<s(t)\leq \frac{1}{\cm}$ (since $C\in\Rt_1$) along with the bounds \eqref{eq:nuc} on $c$ and $\nu$.
Applying Gr\"{o}nwall's inequality yields:
\[\Lt_1^{--}(w(t),s(t))\leq\Lt_1^{--}(w(t_0),s(t_0))\exp\left(\int_{t_0}^t \frac{k\big(1-\cm s(\tau)\big)}{\sqrt{2\Pt_1^{--}(s(\tau))}}\,\dd\tau\right)\leq0,\]
provided that $\Lt_1^{--}(w(t_0),s(t_0))\leq0$. This completes the proof of the comparison principle \eqref{eq:sup1}.

The other comparison principles follow from the same argument. The additional assumptions of $s\leq\st_2$ for the second and third regions ensure that the Lyapunov function is well-defined. This assumption is only necessary in Scenario IV (the weak alignment case). We omit the proofs.
\end{proof}

Next, we introduce a set of strong comparison principles for examining the subcritical threshold region $\ctsub$.
\begin{proposition}[Strong comparison principles]\label{prop:sCP}
Let $(w(t),s(t))$ be a solution to the system \eqref{S}. The trajectory $C$  over the time interval $(t_0,t_1)$ is defined as in \eqref{eq:C}.
Then, the following results hold:
\begin{itemize}
\item[\circled{1}] Suppose $C\subseteq R_1$. Then
	\begin{equation}\label{eq:sub1}
    \L_1^{++}(w(t_0),s(t_0)) > 0 \quad\textnormal{implies}\quad \L_1^{++}(w(t),s(t)) > 0, \quad\forall~t\in[t_0,t_1].
	\end{equation}
\item[\circled{2}] Suppose $C\subseteq R_2$. Then
	\begin{equation}\label{eq:sub2}
    \L_2^{+-}(w(t_0),s(t_0)) > 0 \quad\textnormal{implies}\quad \L_2^{+-}(w(t),s(t)) > 0, \quad\forall~t\in[t_0,t_1].
	\end{equation}
\item[\circled{3}] In \textnormal{Scenario II}, suppose $C\subseteq R_3$. Then
	\begin{equation}\label{eq:sub3}
    \L_3^{--}(w(t_0),s(t_0)) < 0 \quad\textnormal{implies}\quad \L_3^{--}(w(t),s(t)) < 0, \quad\forall~t\in[t_0,t_1].
	\end{equation}
\item[\circled{4}] In \textnormal{Scenario II}, suppose $C\subseteq R_4$. Then
	\begin{equation}\label{eq:sub4}
    \L_4^{-+}(w(t_0),s(t_0)) < 0 \quad\textnormal{implies}\quad \L_4^{-+}(w(t),s(t)) < 0, \quad\forall~t\in[t_0,t_1].
	\end{equation}
\end{itemize}
\end{proposition}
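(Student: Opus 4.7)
The plan is to mimic the computation in the proof of Proposition~\ref{prop:wCP}, with two modifications: each Lyapunov function now uses the appropriate upper bound ($\cM$ or $\nvM$) in matching slots, and we aim to preserve \emph{strict} inequalities. The four cases follow a common template, so I will sketch \circled{1} in detail and then indicate how the sign bookkeeping propagates through the remaining three.

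For \circled{1}, I would differentiate $\L_1^{++}(w(t),s(t))$ along \eqref{S}, substitute the ODE \eqref{aux_P_1} for $P_1^{++}$, and rewrite $w-\nu s = \L_1^{++} - \sqrt{2P_1^{++}(s)} + (\nvM - \nu)s$ to obtain
\[
\frac{\dd}{\dd t}\L_1^{++} = \frac{k(1-\cM s)}{\sqrt{2P_1^{++}(s)}}\,\L_1^{++} + k(\cM - c)\,s + \frac{k(1-\cM s)(\nvM - \nu)\,s}{\sqrt{2P_1^{++}(s)}}.
\]
In $R_1$ we have $s < 1/\cM$, so $1 - \cM s > 0$, and \eqref{eq:nuc} yields $\cM - c \ge 0$ and $\nvM - \nu \ge 0$; both error terms are therefore nonnegative. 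Writing $\lambda_1(t) := k(1-\cM s(t))/\sqrt{2P_1^{++}(s(t))}$, the resulting differential inequality $\frac{\dd}{\dd t}\L_1^{++} \ge \lambda_1(t)\,\L_1^{++}$ gives $\frac{\dd}{\dd t}(\L_1^{++}\,e^{-\int_{t_0}^t\lambda_1}) \ge 0$, so strict positivity at $t_0$ propagates to all $t \in [t_0, t_1]$.

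Cases \circled{2}--\circled{4} follow the same template. For \circled{2}, using \eqref{aux_P_2} in place of \eqref{aux_P_1} produces an identical structural identity; now $s > 1/\cM$ flips the sign of $1 - \cM s$, but this is compensated by $\nvm - \nu \le 0$, so $(1-\cM s)(\nvm - \nu) \ge 0$ and both error terms remain nonnegative. In \circled{3} and \circled{4} the Lyapunov functions take the form \eqref{eq:L2} with a $-\sqrt{2N}$ term, which flips the sign of the linear coefficient in the derivative identity and converts the two error terms into nonpositive quantities (using $\cm - c \le 0$ together with the matching signs of $(\nu_\pm - \nu)$ and $(1 - \cm s)$). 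Accordingly, one obtains $\frac{\dd}{\dd t}\L \le \mu(t)\,\L$ and the same integrating-factor argument propagates strict \emph{negativity}.

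The main subtlety is that in \circled{2} and \circled{4} the linear coefficient in the differential inequality is \emph{negative} (since $1 - \cM s < 0$ in $R_2$, and the sign flip in \circled{4} produces $-k(1-\cm s)/\sqrt{2N_4^{-+}} < 0$ in $R_4$), so one cannot directly invoke Gr\"onwall in the form used for Proposition~\ref{prop:wCP} to compare $\L$ with its value at $t_0$. The integrating-factor reformulation above resolves this: the exponential $\exp(\int_{t_0}^t \lambda)$ is strictly positive regardless of the sign of $\lambda$, so strict positivity (or negativity) at $t_0$ is transferred automatically to $t$. A secondary check is that $\sqrt{2P}$ and $\sqrt{2N}$ remain strictly positive on the relevant trajectory segments; for $P_1^{++}$ this follows from $P_1^{++}(0) = 0$ together with $\frac{\dd P_1^{++}}{\dd s}(0) = k > 0$, and the analogous positivity in \circled{2}--\circled{4} is built into the exit-point data $(w_i, s_i)$ and the lifespan information \eqref{dom:P2pm} and \eqref{dom:P4mp}.
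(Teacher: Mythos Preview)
Your approach is the same as the paper's, and your derivative identity for \circled{1} is correct. However, your handling of \circled{2} and \circled{4} has a real gap. You assert that the integrating-factor form resolves the difficulty because ``$\exp(\int_{t_0}^t\lambda)$ is strictly positive regardless of the sign of $\lambda$,'' but this is only true if $\int_{t_0}^t\lambda$ is \emph{finite}. In $R_2$ the coefficient is $\lambda_2(t)=k(1-\cM s(t))/\sqrt{2P_2^{+-}(s(t))}<0$, and in Scenario~II the function $P_2^{+-}$ vanishes at $s_2$; if the trajectory were allowed to approach $s_2$, the integral would diverge to $-\infty$, the exponential would collapse to $0$, and you would recover only $\L_2^{+-}(t)\ge 0$, not the strict inequality. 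Invoking the lifespan information \eqref{dom:P2pm} alone does not prevent this, because $s_2\in\textnormal{Dom}(P_2^{+-})$.

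The missing step, which the paper supplies, is to combine the hypothesis with the region constraint: since $(w(t_0),s(t_0))\in\overline{R_2}$ forces $w(t_0)\le\nvm s(t_0)$, the assumption $\L_2^{+-}(w(t_0),s(t_0))>0$ gives $\sqrt{2P_2^{+-}(s(t_0))}>0$, hence $s(t_0)<s_2$ strictly. Then one uses that $s'=w-\nu s\le w-\nvm s<0$ in $R_2$, so $s(t)\le s(t_0)<s_2$ throughout, and by Lemma~\ref{lem:P2weak} (or Lemma~\ref{lem:P2strong} in Scenario~I) $P_2^{+-}(s(t))$ has a uniform positive lower bound on the compact range $[s(t_1),s(t_0)]$. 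This makes $\lambda_2$ integrable and the integrating-factor argument valid. The analogous argument for \circled{4} uses $s'>0$ in $R_4$ and the hypothesis $\L_4^{-+}<0$ with $w\ge\nvM s$ to get $s(t_0)>s_4$, then Lemma~\ref{lem:N4weak} or \ref{lem:N4strong}. Your ``secondary check'' paragraph gestures at this but does not contain the argument.
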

\begin{proof}
We start with the proof of \eqref{eq:sub1}. Following the same argument as in Proposition \ref{prop:wCP}, we obtain the bound
\[\L_1^{++}(w(t),s(t))\geq\L_1^{++}(w(t_0),s(t_0))\exp\left(\int_{t_0}^t \frac{k\big(1-\cM s(\tau)\big)}{\sqrt{2P_1^{++}(s(\tau))}}\,\dd\tau\right).\]
Since $C\subseteq R_1$, we have $s(t)\in(0,\frac{1}{\cM})$ for any $t\in(t_0,t_1)$. Therefore, the integral on the right-hand side of the inequality is positive. This leads to the strict inequality
\[\L_1^{++}(w(t),s(t))\geq\L_1^{++}(w(t_0),s(t_0))>0.\]
The proof of \eqref{eq:sub3} follows in a similar manner.

Next, we turn to the proof of \eqref{eq:sub2}. Following the same argument as in Proposition \ref{prop:wCP}, we obtain the bound
\[
\L_2^{+-}(w(t),s(t)) \geq\L_2^{+-}(w(t_0),s(t_0))\exp\left(-\int_{t_0}^t \frac{k\big(\cM s(\tau)-1\big)}{\sqrt{2P_2^{+-}(s(\tau))}}\,\dd\tau\right).
\]
We claim that there is a uniform positive lower bound 
\begin{equation}\label{eq:P2low}
 P_2^{+-}(s(\tau))\geq p>0,\quad \forall~\tau\in[t_0,t_1].	
\end{equation}
To show \eqref{eq:P2low}, we first observe that $s'(t)\leq0$ in the region $R_2$. Hence, $s(\tau)\in[s(t_1),s(t_0)]$. In Scenario I, the uniform bound \eqref{eq:P2low} follows from Lemma \ref{lem:P2strong}. In Scenario II, from the definition \eqref{eq:L} for $\L_2^{+-}$, we have 
\[\sqrt{2P_2^{+-}(s(t_0))}=\L_2^{+-}(w(t_0),s(t_0))-\big(w(t_0)-\nu_-s(t_0)\big)
\geq\L_2^{+-}(w(t_0),s(t_0))>0,\]
where we have used the fact that $w(t_0)\leq\nu_-s(t_0)$ as $C\subseteq R_2$. Therefore, we have the strict inequality $s(t_0)<s_2$. Then, the bound \eqref{eq:P2low} follows from Lemma \ref{lem:P2weak}.

Applying the bound \eqref{eq:P2low}, we end up with the strict inequality
\begin{align*}
\L_2^{+-}(w(t),s(t)) & \geq \L_2^{+-}(w(t_0),s(t_0))\exp\left(-(t_1-t_0)\frac{k\cM s(t_0)}{\sqrt{2p}}\right)>0,
\end{align*}
finishing the proof. 
The proof of \eqref{eq:sub4} follows in a similar manner. 
\end{proof}

\subsection{Global regularity}
In this section, we apply the strong comparison principles from Proposition \ref{prop:sCP} to establish global regularity for subcritical initial data, as stated in Theorem \ref{thm:sub}. The key idea is to demonstrate that the subcritical region $\ctsub$ remains an \emph{invariant region} under the dynamics \eqref{S}.

\begin{proposition}[Invariant subcritical region]\label{prop:sub}
  Consider the dynamics \eqref{S} with initial data $(w_0,s_0)\in \ctsub$. Then, for any $t\geq0$, the solution
	\[(w(t),s(t))\in \ctsub.\]
\end{proposition}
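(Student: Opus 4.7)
The plan is to argue by contradiction using the four strong comparison principles in Proposition~\ref{prop:sCP}. Suppose there exists $t>0$ with $(w(t),s(t))\notin\ctsub$, and let $t_*$ be the infimum of such times. Since $\ctsub$ is open, by continuity $(w(t_*),s(t_*))\in\partial\ctsub$, which consists of portions of the four curves $C_1,\dots,C_4$, together with, in Scenario~II, the $w$-axis segment between $(0,0)$ and $(w_*,0)$.

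The key structural ingredient, inherited directly from the construction in Section~\ref{sec:Lyapunov}, is the Lyapunov characterization
\begin{align*}
\ctsub\cap R_1 &= \{(w,s)\in R_1:\L_1^{++}(w,s)>0\}, & \ctsub\cap R_2 &= \{(w,s)\in R_2:\L_2^{+-}(w,s)>0\},\\
\ctsub\cap R_3 &= \{(w,s)\in R_3:\L_3^{--}(w,s)<0\}, & \ctsub\cap R_4 &= \{(w,s)\in R_4:\L_4^{-+}(w,s)<0\}.
\end{align*}
If $(w(t_*),s(t_*))$ lies in the relative interior of some $C_i\cap R_i$, then a small $\ctsub$-neighborhood of this point is contained in $R_i$, so by continuity there exists $\epsilon>0$ with $(w(t),s(t))\in R_i$ throughout $[t_*-\epsilon,t_*]$. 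The corresponding $\L_i$ value at $t_*-\epsilon$ has the strict sign dictated by the characterization above, and Proposition~\ref{prop:sCP} propagates this strict sign to time $t_*$, contradicting $\L_i(w(t_*),s(t_*))=0$ on $C_i$.

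The remaining cases are the finitely many corner points $(0,0),(w_1,\tfrac{1}{\cM}),(w_2,s_2),(w_3,\tfrac{1}{\cm}),(w_*,0)$ together with the $w$-axis segment in Scenario~II. Each corner is reached along the relative interior of at least one adjacent curve $C_i$ and thus reduces to the previous case; an exit through the $w$-axis is ruled out by applying Proposition~\ref{prop:sCP} to $\L_4^{-+}$ in $R_4$, since a $\ctsub$-neighborhood of $(w_*,0)$ lies in $R_4$. Scenario~I is simpler because $\ctsub$ is unbounded and only the first two comparison principles are needed. The main obstacle is the \emph{gap strip} $\{\tfrac{1}{\cM}\le s\le\tfrac{1}{\cm},\ \nvm s\le w\le \nvM s\}$, which in Scenario~II lies interior to $\ctsub$ yet belongs to none of the four $R_i$, so that no single comparison principle applies there directly. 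The resolution is that this strip is separated from $\partial\ctsub$, and any exit from $\ctsub$ must eventually pass through some $C_i\cap R_i$, at which point the dichotomy above takes over.
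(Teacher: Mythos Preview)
Your overall strategy matches the paper's: both proofs invoke the four strong comparison principles of Proposition~\ref{prop:sCP} to show the trajectory cannot reach any $C_i$, and both argue by contradiction via continuity. Your organization around the exit time $t_*$ is slightly different from the paper's organization around the entry time $t_0$ into each $R_i$, but the substance is the same.

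There is, however, a genuine gap in your treatment of the $w$-axis segment. You claim that ``an exit through the $w$-axis is ruled out by applying Proposition~\ref{prop:sCP} to $\L_4^{-+}$ in $R_4$,'' but this does not work. Recall $\L_4^{-+}(w,s)=w-\nvM s-\sqrt{2N_4^{-+}(s)}$, so along the segment $\{(w,0):0\le w<w_*\}$ one has $\L_4^{-+}(w,0)=w-\sqrt{2N_4^{-+}(0)}=w-w_*<0$. Thus the comparison principle \eqref{eq:sub4}, which only propagates the strict sign $\L_4^{-+}<0$, says nothing against the trajectory reaching this segment; the segment is \emph{not} a zero level set of $\L_4^{-+}$. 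The paper handles this differently and more directly: in $R_4$ one has $s'=w-\nu s\ge w-\nvM s>0$, so $s$ is strictly increasing and cannot tend to $0$ from within $R_4$. The only remaining issue is approaching $(0,0)$, which the paper rules out by invoking the $R_1$ comparison (since $\L_1^{++}(0,0)=0$). Your proposal also omits any handling of the unbounded $w$-axis half-line $\{w\ge 0,\,s=0\}$ in Scenario~I, where the same monotonicity argument is needed.
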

\begin{proof}
Since the solution $(w(t),s(t))$ of the dynamics \eqref{S} is continuous, the trajectory must intersect the boundary $\pa\ctsub$ if it were to leave the region $\ctsub$. We will establish a contradiction by showing that the solution cannot reach $\pa\ctsub$.

Recall that the boundary $\pa\ctsub$ consists of the following segments:
\begin{align*}
\text{Scenario I:} \quad & \pa\ctsub=C_1\cup C_2 \cup \big\{(w,s): w\geq0,s=0\big\},\\
\text{Scenario II:} \quad & \pa\ctsub=C_1\cup C_2\cup C_3\cup C_4 \cup \big\{(w,s): w\in[0,w_*],s=0\big\}.
\end{align*}

We apply strong comparison principles in Proposition \ref{prop:sCP} to show that the trajectory cannot reach $C_i$. Take $C_1$ as an example. Suppose the trajectory stays in $R_1$ in the time interval $(t_0,t_1)$. From the comparison principle \eqref{eq:sub1}, we obtain that 
	\[\L_1^{++}(w(t),s(t))>0, \quad\forall~t\in[t_0,t_1],\]
	provided that 
	\begin{equation}\label{eq:subinit}
		\L_1^{++}(w(t_0),s(t_0))>0.
	\end{equation} On the other hand, by the construction of $C_1$, we know
	\[\L_1^{++}(w,s)=0,\quad\forall~(w,s)\in C_1.\]
	Therefore, $(w(t),s(t))$ cannot reach $C_1$.
	
Next, we verify the assumption \eqref{eq:subinit}. If the trajectory is initiated in $R_1$, namely $t_0=0$, then $(w_0,s_0)\in\ctsub$ implies \eqref{eq:subinit}. If the trajectory enters $R_1$ at a later time $t_0>0$,  the only potential violation of \eqref{eq:subinit} occurs if $(w(t_0),s(t_0)) = (w_1,s_1)$. However, this contradicts the comparison principle in the region $R_2$. Therefore, \eqref{eq:subinit} must hold unless the trajectory reaches another segment of the boundary.

The same argument applies to $C_2$, $C_3$ and $C_4$.

Finally, we argue that 
\begin{equation}\label{eq:pos}
	s(t)>0,\quad \forall~t\geq0,	
\end{equation}
so the trajectory cannot reach the $w$-axis. Suppose the trajectory stays in $R_4$ in the time interval $(t_0,t_1)$. Observe that $s'(t)>0$ if $(w(t),s(t))\in R_4$. Hence,
	$s(t)>s(t_0)>0$ if $s(t_0)>0$. If the trajectory is initiated in $R_4$, namely $t_0=0$, then $(w_0,s_0)\in\ctsub$ implies \eqref{eq:pos}. If the trajectory enters $R_4$ at a later time $t_0>0$, the only potential violation of \eqref{eq:pos} occurs if $(w(t_0),s(t_0)) = (0,0)$. However, this contradicts the comparison principle in the region $R_1$. Therefore, we must have \eqref{eq:pos} and $w$-axis cannot be reached.

We conclude that the trajectory cannot reach $\pa\ctsub$. Hence, the solution has to stay inside the region $\ctsub$ in all time.
\end{proof}

Given that the solution  $(w(t),s(t))$  remains within the invariant region $\ctsub$ and, in particular, that \eqref{eq:pos} holds, we are now prepared to establish the global well-posedness result stated in Theorem \ref{thm:sub}.

\begin{proof}[Proof of Theorem \ref{thm:sub}]
	Given any $x\in\T$, consider the dynamics \eqref{S} along the characteristic path $X(t;x)$. Applying Proposition \ref{prop:sub}, we obtain that if $(w_0(x),s_0(x))\in\ctsub$, then $(w(X(t;x),t),s(X(t;x),t))\in\ctsub$, and in particular \eqref{eq:pos} holds, namely $s(X(t;x),t))>0$. Using the relation \eqref{eq:ws}, we deduce
	\[\rho(X(t;x),t)<\infty,\quad\text{and}\quad |G(X(t;x),t)|=\rho(X(t;x),t)|w(X(t;x),t)|<\infty,\]
	and from \eqref{eq:G} we obtain
	\[|\pa_xu(X(t;x),t)|\leq |G(X(t;x),t)|+\nvM<\infty.\]
	Thus, the characteristic paths $X(t;x)$ remain well-defined for all time, with $\{X(t;x):x\in\T\}=\T$. Consequently, $\|\rho(\cdot,t)\|_{L^\infty}$ and $\|G(\cdot,t)\|_{L^\infty}$ are bounded for all time. The regularity criterion \eqref{eq:BKM} holds. Global regularity then follows from Theorem \ref{thm:LWP}.
	
	The subcritical region $(w,s)\in\ctsub$ can be translated to $(G,\rho)\in\CTsub$ using the relation \eqref{eq:ws}. Thanks to the Lyapunov functions $\L^{\pm\pm}$, the region can be expressed explicitly in terms of $P^{\pm\pm}$ and $N^{\pm\pm}$, as stated in Theorem \ref{thm:sub}.
\end{proof}

\subsection{Finite-time singularity formation}
In this section, we prove Theorem \ref{thm:sup}, demonstrating that supercritical initial data leads to finite-time singularity formation.

First, we apply the weak comparison principles in Proposition \ref{prop:wCP} to establish the following result on the invariant region $\ctsup$: the trajectory cannot leave $\ctsup$ except hitting the $w$-axis.

\begin{proposition}[Invariant supercritical region]\label{prop:sup}
  Consider the dynamics \eqref{S} with initial data $(w_0,s_0)\in \ctsup$. Let $t>0$ such that $s(\tau)>0$ for any $\tau\in[0,t]$. Then,
	\[(w(t),s(t))\in\ctsup.\]
\end{proposition}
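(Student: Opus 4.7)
The plan is to mirror the proof of Proposition \ref{prop:sub}, substituting the weak comparison principles from Proposition \ref{prop:wCP} for the strong ones. The key structural fact is that in each region $\Rt_i$ the supercritical set is precisely the sign set of the corresponding Lyapunov function: $\ctsup\cap\Rt_1 = \{\Lt_1^{--}\le 0\}\cap\Rt_1$, $\ctsup\cap\Rt_2 = \{\Lt_2^{-+}\le 0\}\cap\Rt_2$, $\ctsup\cap\Rt_3 = \{\Lt_3^{++}\ge 0\}\cap\Rt_3$, and $\ctsup\cap\Rt_4 = \{\Lt_4^{+-}\ge 0\}\cap\Rt_4$. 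This follows directly from the construction in Section \ref{sec:Lyapunov}, where each $\Ct_i$ is the zero level set of the named Lyapunov function and $\ctsup$ lies on its ``supercritical'' side by design. Since $s(\tau)>0$ on $[0,t]$, the trajectory cannot exit $\ctsup$ through the $w$-axis, so any escape must be a crossing of some $\Ct_i$ (in Scenario III only $\Ct_1,\Ct_2$ are present, while in Scenario IV all four curves appear).

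First I would apply the appropriate weak comparison principle in each region to rule out such a crossing. While the trajectory lies in $\Rt_i$, Proposition \ref{prop:wCP} guarantees that the supercritical sign condition is preserved, so $\Ct_i$ cannot be crossed from the inside. To invoke the comparison principle at the entry time $t_0$ into $\Rt_i$, the correct sign at $t_0$ must hold. If $t_0=0$ and the trajectory starts in $\Rt_i$, this is immediate from $(w_0,s_0)\in\ctsup$. If the trajectory enters $\Rt_i$ at $t_0>0$ from a neighboring region $\Rt_j$, the initial conditions in \eqref{aux_P_1}--\eqref{eq:Nt4} are tuned so that $\Ct_i$ and $\Ct_j$ meet at a common vertex, and continuity across the shared boundary transfers the sign condition from $\Rt_j$ to $\Rt_i$. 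The only potential failure is the entry point coinciding with that vertex itself, which would force the relevant Lyapunov function to vanish strictly inside $\Rt_j$ and contradict the comparison principle just applied there, exactly as in the subcritical case. In Scenario IV, excursions into the strip $\{s>\st_2\}$ pose no difficulty, since that entire strip is contained in $\ctsup$.

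The main subtlety, compared to the subcritical case, concerns weak versus strict inequalities. Because $\ctsup$ is closed (being the complement of an open enclosed region), the trajectory is permitted to touch $\partial\ctsup$ while remaining in $\ctsup$; the weak comparison principles of Proposition \ref{prop:wCP} are precisely tailored to this situation and suffice. In particular, no uniform positive lower bound on $\Pt$ or $\Nt$ analogous to \eqref{eq:P2low} is needed, which makes this proof somewhat simpler than its subcritical counterpart. The main bookkeeping obstacle is tracking the order in which the trajectory visits the regions $\Rt_i$ and confirming that at each transition the sign inherited from the previous region matches the one required to restart the comparison principle.
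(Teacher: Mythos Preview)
Your proposal is correct and takes essentially the same approach as the paper, invoking the weak comparison principles of Proposition \ref{prop:wCP} in each region $\Rt_i$ and handling the strip $\{s>\st_2\}$ in Scenario IV exactly as the paper does. One minor point: the vertex argument you borrow from the subcritical proof is unnecessary here, since the weak inequalities already permit $\Lt_i=0$ at a transition point and the comparison principle restarts there without any contradiction; the paper instead frames the argument at the first boundary contact (where the relevant Lyapunov function vanishes) rather than tracking signs through successive regions, which is slightly more economical.
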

\begin{proof}
Since the solution $(w(t),s(t))$ of the dynamics \eqref{S} is continuous, the trajectory must intersect the boundary $\pa\ctsup$ if it were to leave the region $\ctsup$. Recall that the boundary $\pa\ctsup$ consists of the following segments:
\begin{align*}
\text{Scenario III:} \quad & \pa\ctsub=\Ct_1\cup \Ct_2 \cup \big\{(w,s): w\leq0,s=0\big\},\\
\text{Scenario IV:} \quad & \pa\ctsub=\Ct_1\cup \Ct_2\cup \Ct_3\cup \Ct_4 \cup \big\{(w,s): w\in(-\infty,0]\cup[\wt_*,\infty),s=0\big\}.
\end{align*}
We will establish a contradiction by showing that the solution cannot cross $\{\Ct_i\}_{i=1}^4$.

Take $\Ct_1$ as an example. Suppose the solution reaches $\Ct_1$ at time $t_0$. Then, we have $(w(t_0),s(t_0))\in\Ct_1$, and by the definition of $\Ct_1$,
\[\Lt_1^{--}(w(t_0),s(t_0))=0.\]
Applying the weak comparison principle \eqref{eq:sup1}, we deduce
\[\Lt_1^{--}(w(t),s(t))\leq0,\quad\forall~t\in[t_0,t_1],\]
as long as the trajectory $C$ defined in \eqref{eq:C} lies in $\Rt_1$. Therefore, the trajectory cannot cross $\Ct_1$, and will stay in $\ctsup\cap\Rt_1$ until leaving $\Rt_1$.

The same argument applies to $\Ct_2, \Ct_3$ and $\Ct_4$. We now comment on the additional assumption $s\leq \st_2$ in the comparison principles \eqref{eq:sup2} and \eqref{eq:sup3} for regions $\Rt_2$ and $\Rt_3$. Note that for any $(w,s)\not\in\ctsup$, we have $s<\st_2$. Therefore, the comparison principles can be used to show that the trajectory cannot cross the boundary. It is possible that the trajectory $(w(t),s(t))$ to move above $s=\st_2$, in which case the Lyapunov functions are no longer well-defined. However, comparison principles are not needed in this region, as $\R\times[\st_2,\infty)\subseteq\ctsup$.
\end{proof}

Next, we provide a more detailed description of the dynamics \eqref{S}. The trajectory moves through the regions in the order $\Rt_4, \Rt_3, \Rt_2$, and $\Rt_1$, ultimately reaching the  $w$-axis, which leads to a blowup of $\rho = 1/s$. Furthermore, the dynamics remain in each region for a finite amount of time. Consequently, the blowup occurs in finite time.

\begin{proposition}[Finite-time blowup]
  Consider the dynamics \eqref{S} with initial data $(w_0,s_0)\in \ctsup$. Then, there exists a finite time $T_*$, such that
	\[ w(T_*)\leq 0,\quad s(T_*)=0.\]	
\end{proposition}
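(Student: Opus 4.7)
The plan is to combine the invariance result (Proposition \ref{prop:sup}) with the weak comparison principles of Proposition \ref{prop:wCP} to track the trajectory $(w(t),s(t))$ through the phase regions $\Rt_j$ until it reaches the $w$-axis. By Proposition \ref{prop:sup}, the trajectory stays in $\ctsup$ as long as $s(t)>0$, so the task reduces to showing $s$ reaches $0$ in finite time, with $w\leq 0$ at that moment.

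First I would argue that the trajectory must eventually enter the terminal region $\Rt_1$. In Scenario III this is automatic since $\ctsup\subseteq\Rt_1\cup\Rt_2$. In Scenario IV, I would prove counterclockwise propagation $\Rt_4\to\Rt_3\to\Rt_2\to\Rt_1$ through a sign analysis of $w'=k(1-cs)$ and $s'=w-\nu s$ in each transient region. For instance, in $\Rt_2$ one has $s>1/\cm\geq 1/c$, so $w'\leq k(1-\cm s)<0$; this monotone decrease of $w$ drives $s'$ sufficiently negative to push $s$ across $1/\cm$ into $\Rt_1$ in finite time. Analogous analyses in $\Rt_3$ and $\Rt_4$, together with the supercritical Lyapunov bounds \eqref{eq:sup3}--\eqref{eq:sup4}, show the trajectory cannot leak back across the boundary curves $\Ct_3,\Ct_4$ and must instead continue counterclockwise.

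Once the trajectory enters $\Rt_1$ at some time $t_0$, the weak comparison \eqref{eq:sup1} gives $w(t)\leq\nvm s(t)-\sqrt{2\Pt_1^{--}(s(t))}$ for $t\geq t_0$. Combined with $\nu(t)\geq\nvm$ from \eqref{eq:nuc}, this yields the differential inequality
\[
 s'(t)\leq-\bigl(\nu(t)-\nvm\bigr)s(t)-\sqrt{2\Pt_1^{--}(s(t))}\leq-\sqrt{2\Pt_1^{--}(s(t))}.
\]
Linearizing the defining ODE for $\Pt_1^{--}$ near $s=0$ gives $(\Pt_1^{--})'(0)=k>0$, hence $\Pt_1^{--}(s)\sim ks$ and the singular integral $\int_0^{s(t_0)}\dd s/\sqrt{2\Pt_1^{--}(s)}$ converges. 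Therefore $s$ reaches $0$ at some finite $T_*$, and at the blowup time the same comparison bound forces $w(T_*)\leq-\sqrt{2\Pt_1^{--}(0)}=0$, as claimed.

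The main obstacle will be the first step: showing the trajectory cannot stall in any transient region $\Rt_2,\Rt_3,\Rt_4$. Although one of $w'$ or $s'$ has a definite sign there, the rate may degenerate near the region boundaries (for example, $w'\to 0$ as $s\to(1/\cm)^+$ in $\Rt_2$). A quantitative escape rate must be extracted by coupling the sign information with the Lyapunov comparison bounds, in the spirit of the phase plane analysis developed in \cite{bhatnagar2023critical} for the constant-background case.
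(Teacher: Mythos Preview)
Your final step in $\Rt_1$ is correct and is arguably cleaner than the paper's endgame: once the trajectory is trapped to the left of $\Ct_1$, the inequality $s'\le -\sqrt{2\Pt_1^{--}(s)}$ together with the integrability of $1/\sqrt{\Pt_1^{--}}$ near the origin gives finite-time extinction of $s$ directly.

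The genuine gap is exactly where you flag it, and the paper does \emph{not} close it in the way you anticipate. Working with the comparison-principle regions $\Rt_2,\Rt_3,\Rt_4$ themselves, neither the raw sign of $w'$ or $s'$ nor the Lyapunov bounds \eqref{eq:sup2}--\eqref{eq:sup4} give a uniform escape rate: for instance, in $\Rt_2$ the bound $s'\le (\nvM-\nvm)s-\sqrt{2\Pt_2^{-+}(s)}$ coming from $\Lt_2^{-+}\le 0$ need not be negative, and $w'\le k(1-\cm s)$ degenerates as $s\downarrow 1/\cm$. A trajectory could in principle linger near a corner such as $(\nvM/\cm,1/\cm)$ with both $w'$ and $s'$ small, and the proposal offers no mechanism to rule this out.

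The paper sidesteps this entirely by abandoning the $\Rt_i$ decomposition for the finite-time argument. It partitions $\ctsup$ into four \emph{coarser} areas $A_1,\dots,A_4$ cut by $\{w=0\}$ and two \emph{interior} $s$-thresholds $\st_*=\tfrac12(\st_1+\st_2)$ and $\st_{**}=\tfrac{1}{2\cM}$, chosen precisely so that one of $w',s'$ has a uniform nonzero bound throughout each $A_i$ (e.g.\ $w'\le k(1-\cm\st_*)<0$ on $A_2$, $s'\le -\nvm\st_{**}<0$ on $A_3$, $w'\ge k/2>0$ on $A_4$). The Lyapunov comparisons are invoked only in $A_1$ (via Lemmas \ref{lem:Nt3}--\ref{lem:Nt4}) to obtain a uniform lower bound $s'\ge v_s>0$ there, and the invariance of $\ctsup$ prevents exits across the $\Ct_i$. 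The transit time through each $A_i$ is then bounded by an elementary linear estimate. So the resolution is a different decomposition, not a sharper analysis of the $\Rt_i$.
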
\label{prop:blowup}
\begin{proof}
 We divide the supercritical region $\ctsup$ into four areas $\{A_i\}_{i=1}^4$, defined as follows:
 \begin{align*}
 	A_1 & = \ctsup~\cap~\big\{(w,s): w>0,~ 0<s<\st_*\big\},\quad \st_*=\tfrac12(\st_1+\st_2),\\
 	A_2 & = \ctsup~\cap~\big\{(w,s): w>0,~ s\geq\st_*\big\},\\
	A_3 & = \ctsup~\cap~\big\{(w,s): w\leq 0,~ s>\st_{**}\big\},\quad \st_{**}=\tfrac12\st_3=\tfrac{1}{2\cM},\\
	A_4 & = \ctsup~\cap~\big\{(w,s): w\leq 0,~ 0<s\leq\st_{**}\big\}.
\end{align*}
Refer to Figure \ref{fig:blowup} for an illustration of these four areas. We will demonstrate that a trajectory progresses sequentially through $A_1$, $A_2$, $A_3$, and $A_4$, ultimately exiting along the negative $w$-axis. Moreover, the solution remains in each area for a finite amount of time.
\begin{figure}[h!]
    \includegraphics{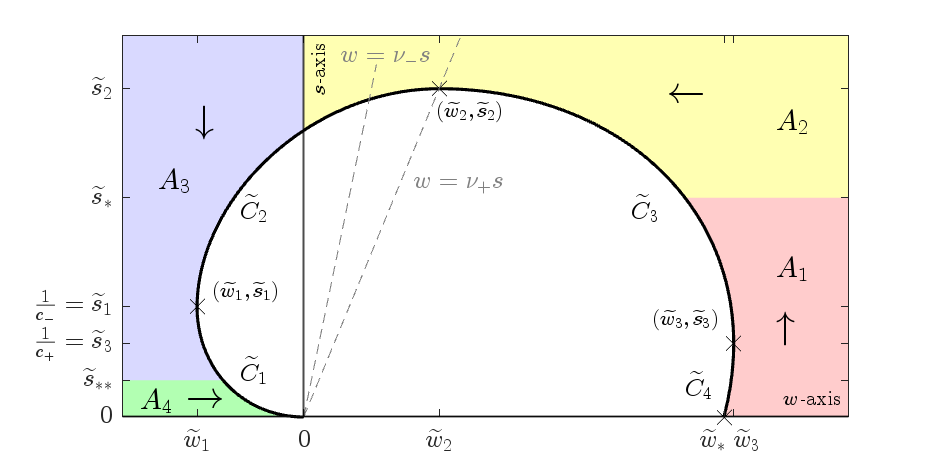}
\caption{Illustration of the flow in the supercritical region $\ctsup$}\label{fig:blowup}
\end{figure}

\noindent\textbf{Step 1:} In area $A_1$, we claim that the trajectory moves upward with a minimum speed $v_s>0$, namely
\begin{equation}\label{eq:A1}
	s'(t)\geq v_s>0, \quad\text{for any}~~(w(t),s(t))\in A_1.	
\end{equation}
By Proposition \ref{prop:sup}, the trajectory cannot cross $\Ct_3\cup\Ct_4$. Hence, it may only exit $A_1$ and enter $A_2$. Furthermore, this transition must occur within a finite time $t\leq \tfrac{\st_*}{v_s}$, since \eqref{eq:A1} implies
\[s(t)\geq s_0 + v_st\geq \st_*,\quad\text{if}~~ t\geq\tfrac{\st_*}{v_s}.\]

To prove \eqref{eq:A1}, we apply \eqref{S} and obtain
\[s'(t)= w(t) - \nu s(t)\geq w(t) - \nvM s(t).\]
The positive lower bound corresponds to the fact that $A_1$ is distance away from the line $w=\nvM s$, as illustrated in Figure \ref{fig:blowup}. To check this analytically, we split into two cases.

\textit{Case 1:} $(w(t),s(t))\in A_1\cap\Rt_3$. 
Applying comparison principle \eqref{eq:sup3}, we obtain 
 \[s'(t) \geq \Lt_3^{++}(w(t),s(t))+\sqrt{2\Nt_3^{++}(s(t))}\geq \sqrt{2\Nt_3^{++}(s(t))}>0,\]
 where the last strict inequality uses the property of $\Nt_3^{++}$. See Lemma \ref{lem:Nt3}.

\textit{Case 2:} $(w(t),s(t))\in A_1\cap\Rt_4$. 
Applying comparison principle \eqref{eq:sup4} and Lemma \ref{lem:Nt4}, we have 
\[s'(t) \geq w(t) - \nvM s(t) > \min\big\{\sqrt{\tfrac k\cM},\tfrac{\nvm}{2\cM}\big\}>0.\]
 
\noindent\textbf{Step 2:} In area $A_2$, we apply \eqref{S} and obtain that the trajectory moves leftward with
\begin{equation}\label{eq:A2}
	w'(t)=k(1-cs(t))\leq k(1-\cm \st_*)=-\tfrac{k}{2}(\cm \st_2-1) =: -v_w, \,\,\text{for any}~~(w(t),s(t))\in A_2,
\end{equation}
where the minimum speed $v_w=\tfrac{k}{2}(\cm \st_2-1)>0$ as $\st_2>\st_1=1/\cm$.
By Proposition \ref{prop:sup}, the trajectory cannot cross $\Ct_2\cup\Ct_3$. Hence, it may only exit $A_2$ and enter $A_3$. Furthermore, this transition must occur within a finite time $t\leq \tfrac{w_0}{v_w}$, since \eqref{eq:A2} implies
\[w(t)\leq w_0 - v_wt\leq 0,\quad\text{if}~~ t\geq\tfrac{w_0}{v_w}.\]

\noindent\textbf{Step 3:} In area $A_3$, we apply \eqref{S} and obtain that the trajectory moves downward with
\begin{equation}\label{eq:A3}
	s'(t)\leq w(t) - \nvm s(t)\leq 0 - \nvm \st_{**}=-\tfrac{\nvm}{2\cM}<0.
\end{equation}
By Proposition \ref{prop:sup}, the trajectory cannot cross $\Ct_1\cup\Ct_2$. Hence, it may only exit $A_3$ and enter $A_4$. Furthermore, this transition must occur within a finite time $t\leq \tfrac{2\cM s_0}{\nvm}$, since \eqref{eq:A3} implies
\[s(t)\leq s_0 - \tfrac{\nvm}{2\cM}t\leq 0,\quad\text{if}~~ t\geq\tfrac{2\cM s_0}{\nvm}.\]
 
\noindent\textbf{Step 4:} In area $A_4$, we apply \eqref{S} and obtain that the trajectory moves rightward with
\begin{equation}\label{eq:A4}
	w'(t)\geq k(1-\cM \st_{**})\leq 0 - \nvm \st_{**}=\tfrac{k}{2}>0.
\end{equation}
By Proposition \ref{prop:sup}, the trajectory cannot cross $\Ct_1$. Hence, it may only exit $A_4$ through the negative $w$-axis $\{(w,0): w\leq0\}$. Furthermore, this transition must occur within a finite time $t\leq \tfrac{-2w_0}{k}$, since \eqref{eq:A4} implies
\[w(t)\geq w_0 +\tfrac{k}{2}t\geq 0,\quad\text{if}~~ t\geq\tfrac{-2w_0}{k}.\] 

Combining the four steps concludes the proof.

For Scenario III, $\ctsup$ consists of $A_2$, $A_3$ and $A_4$. The proof follows the same argument. Since $\st_2$ is not defined, we may take $\st_*=\st_1+1$. Everything else follows similarly.
\end{proof}

\begin{proof}[Proof of Theorem \ref{thm:sup}]
	Suppose there exists $x_*\in\T$ such that $(w_0(x_*),s_0(x_*))\in\ctsup$. We consider the dynamics \eqref{S} along the characteristic path $X(t;x_*)$. Applying Proposition \ref{prop:blowup}, we obtain that there exists a finite time $T_*$ such that 
	\[w(X(T_*;x_*),T_*)<0,\quad \text{and}\quad s(X(T_*;x_*),T_*)=0.\]
	From the relations \eqref{eq:ws} and \eqref{eq:G}, we deduce
	\[\rho(X(T_*;x_*),T_*)=\infty,\quad G(X(T_*;x_*),T_*)=-\infty, \quad \text{and}\quad \pa_xu(X(T_*;x_*),T_*)=-\infty.\]
	Therefore, the solution developed a singular shock at time $T_*$ and location $X(T_*;x_*)$.
	
	The supercritical region $(w,s)\in\ctsup$ can be translated to $(G,\rho)\in\CTsup$ using the relation \eqref{eq:ws}. Thanks to the Lyapunov functions $\Lt^{\pm\pm}$, the region can be expressed explicitly in terms of $\Pt^{\pm\pm}$ and $\Nt^{\pm\pm}$, as stated in Theorem \ref{thm:sup}.
\end{proof}

\section{Auxiliary phase plane analysis}\label{sec:phase}
In this section, we perform phase plane analysis to the coupled dynamics \eqref{S}, and obtain explicit expressions on the quantities and conditions in our main critical threshold theorems.

Rewrite the localized system \eqref{S} in matrix form:
\[
    \begin{bmatrix}
        w-\frac{\nu}{c} \\[5pt]
        s-\frac{1}{c}
    \end{bmatrix}^\prime = \begin{bmatrix}
        0 & -kc \\
        1 & -\nu
    \end{bmatrix} \begin{bmatrix}
        w-\frac{\nu}{c} \\[5pt]
        s-\frac{1}{c}
    \end{bmatrix}.
\]
The matrix has two eigenvalues:
\begin{equation}\label{eq:lambda12}
\lambda_1 := \frac{-\nu+\sqrt{\nu^2-4kc}}{2}, \ \ \textnormal{and} \ \ \lambda_2 := \frac{-\nu-\sqrt{\nu^2-4kc}}{2}.	
\end{equation}
There are two scenarios:
\begin{itemize}
 \item[(I).] If $\nu\geq 2\sqrt{kc}$, then $\lambda_1$ and $\lambda_2$ are both real and negative. Solutions takes the explicit form:
 	\[w(t)=\frac{\nu}{c}-A_1kce^{\lambda_1t}-A_2kce^{\lambda_2t},
 	\quad
 	s(t)=\frac{1}{c}+A_1\lambda_1e^{\lambda_1t}+A_2\lambda_2e^{\lambda_2t},\]
 	where $A_1$ and $A_2$ are determined by initial data.
 	Note that when $\nu=2\sqrt{kc}$, the two eigenvalues $\lambda_1=\lambda_2$. Then $e^{\lambda_2t}$ should be replaced by $te^{\lambda_1t}$. We will omit the detailed calculation for this borderline case.
 \item[(II).] If $\nu<2\sqrt{kc}$, then $\lambda_1$ and $\lambda_2$ are complex-valued, with
 \[\lambda_1=-\frac{\nu}{2}+i\theta,\quad \lambda_2=-\frac{\nu}{2}-i\theta,\quad {where}\quad \theta=\frac12\sqrt{4kc-\nu^2}.\]
 Solution are \emph{oscillatory}, satisfying the following explicit form: 
 	\begin{align*}
	w(t) & =\frac{\nu}{c}-e^{-\frac{\nu t}{2}}\big(A_1kc\cos(\theta t)+A_2kc\sin(\theta t)\big),\\
 	s(t) & =\frac{1}{c}+e^{-\frac{\nu t}{2}}\Big((-A_1\tfrac{\nu}{2}+A_2\theta)\cos(\theta t)-(A_1\theta+A_2\tfrac{\nu}{2})\sin(\theta t)\Big),
 	\end{align*}
 	where $A_1$ and $A_2$ are determined by initial data.
\end{itemize}
Since the curves $\{C_i\}_{i=1}^4$ and $\{\Ct_i\}_{i=1}^4$ are trajectories of the localized systems (\ref{S}$^{\pm\pm}$), we will perform direct calculations to obtain explicit descriptions of the thresholds.

\subsection{Subcritical threshold}
We split the calculation into four steps, corresponding to the curves $C_1,\ldots,C_4$ sequentially.

\medskip\noindent\textbf{Step 1:}
In region $R_1=\{(w,s): 0<s<1/\cM, w<\nvM s\}$, constructing the trajectory $C_1$ amounts to solving (\ref{S}$^{++}$) backwards in time,  subject to the initial condition 
    \[
    w(0) = 0, \quad s(0) = 0.
    \]
    
There are two cases.
    
\medskip\noindent(I).
If $\nu_+ \geq 2\sqrt{kc_+}$, then the solution $(w(t),s(t))$ takes the following form:
\begin{align*}
    w(t) &=\frac{\nu_+}{c_+}-\frac{k}{\lambda_1^{++}-\lambda_2^{++}}\bigg(\frac{\lambda_2^{++}}{\lambda_1^{++}}\exp(\lambda_1^{++}t)-\frac{\lambda_1^{++}}{\lambda_2^{++}}\exp(\lambda_2^{++}t)\bigg),\\
    s(t) &= \frac{1}{c_+}+\frac{\lambda_2^{++}}{c_+(\lambda_1^{++}-\lambda_2^{++})}\exp(\lambda_1^{++}t)-\frac{\lambda_1^{++}}{c_+(\lambda_1^{++}-\lambda_2^{++})}\exp(\lambda_2^{++} t).
\end{align*}
Let $t_1<0$ be the time at which $C_1$ exits the region at $(w_1, s_1)$, namely
\[w(t_1)=w_1,\quad s(t_1) = s_1 = \tfrac{1}{c_+}.\] 
Using this relation, We solve for $t_1$ as:
\[ t_1 = \frac{1}{\lambda_1^{++}-\lambda_2^{++}}\log\bigg(\frac{\lambda_1^{++}}{\lambda_2^{++}}\bigg).\]
Consequently, we obtain the expression of $w_1$:
\[
     w_1  = \frac{\nu_+}{c_+} - \eta_1,\quad\text{where}\quad \eta_1:=\bigg(\frac{\lambda_2^{++}}{\lambda_1^{++}}\bigg)^{\frac{\nu_+}{2(\lambda_1^{++}-\lambda_2^{++})}}\sqrt{\frac{k}{c_+}}.
\]

\medskip\noindent(II).
 If $0< \nu_+ < 2\sqrt{kc_+}$, then the solution $(w(t),s(t))$ takes the following form:
      \begin{align*}
    w(t) &= \frac{\nvM}{\cM}+e^{-\frac{\nvM t}{2}}\bigg[-\frac{\nvM}{\cM}\cos(\theta^{++} t)+\left(\frac{k}{\theta^{++}}-\frac{\nvM^2}{2c_+\theta^{++}}\right)\sin(\theta^{++}t)\bigg],\\
    s(t) &= \frac{1}{\cM}+e^{-\frac{\nvM t}{2}}\bigg[-\frac{1}{\cM}\cos(\theta^{++}t)-\frac{\nvM}{2c_+\theta^{++}}\sin(\theta^{++}t)\bigg].
\end{align*}
In this case, $t_1$, the time at which $C_1$ exits the region, takes the following form:
\[
t_1 = -\frac{1}{\theta^{++}}\arctan\bigg(\frac{2\theta^{++}}{\nvM}\bigg).
\]
Plugging $t_1$ into the expression of $w(t)$, we obtain
\[
 w_1 = \frac{\nu_+}{c_+}- \sqrt{\frac{k}{c_+}}z_1,\quad z_1:=\exp\bigg(\frac{\nvM\arctan(\frac{2\theta^{++}}{\nvM})}{2\theta^{++}}\bigg).
\]

To summarize, we have the explicit expression of $w_1$ as follows:
\begin{equation}\label{eq:w1}
	w_1=\begin{cases}
	\frac{\nvM}{\cM} - \eta_1,&\nvM\geq2\sqrt{k\cM},\\
%	\sqrt{\frac{k}{\cM}}(2-e) & \nvM=2\sqrt{k\cM},\\
	\frac{\nvM}{\cM}- \sqrt{\frac{k}{\cM}}z_1, & 0< \nvM<2\sqrt{k\cM}.
\end{cases}
\end{equation}
%Note that in the borderline case $\nvM=2\sqrt{k\cM}$, we have $w_1=\sqrt{\frac{k}{\cM}}(2-e)$. For simplicity, we won't specify the borderline case from now on.

\medskip\noindent\textbf{Step 2:} 
In region $R_2=\{(w,s):s>1/\cM, w<\nvm s\}$, we construct the trajectory $C_2$ by solving (\ref{S}$^{+-}$) backward in time, subject to the initial condition 
  \[
  w(t_1) = w_1,\quad s(t_1) = \tfrac{1}{c_+}.
  \]
  
There are two cases:

\medskip\noindent(I). In the case of \emph{strong alignment}, when $\nu_- \geq 2\sqrt{kc_+}$, the solution $(w(t),s(t))$ is given as
\begin{subequations}\label{eq:P2strong}
\begin{align}
    w(t) &= \frac{\nvm}{\cM}-\frac{\tfrac{\nvm}{\cM}-w_1}{\lambda_1^{+-}-\lambda_2^{+-}}\bigg(-\lambda_2^{+-}\exp\big(\lambda_1^{+-}(t-t_1)\big)+\lambda_1^{+-}\exp\big(\lambda_2^{+-}(t-t_1)\big)\bigg),\label{eq:wP2strong}\\
    s(t) &= \frac{1}{c_+}- \frac{\tfrac{\nvm}{\cM}-w_1}{\lambda_1^{+-}-\lambda_2^{+-}}\bigg(\exp\big(\lambda_1^{+-}(t-t_1)\big)-\exp\big(\lambda_2^{+-}(t-t_1)\big)\bigg).\label{eq:sP2strong}
\end{align}
\end{subequations}	
We apply these explicit expressions to verify that the trajectory $C_2$ won't exit $R_2$. Indeed, we have
\begin{equation}\label{eq:C2R2}
  w(t) - \nvm s(t) = - \frac{\tfrac{\nvm}{\cM}-w_1}{\lambda_1^{+-}-\lambda_2^{+-}}\bigg(\lambda_1^{+-}\exp\big(\lambda_1^{+-}(t-t_1)\big)-\lambda_2^{+-}\exp\big(\lambda_2^{+-}(t-t_1)\big)\bigg)<0,
\end{equation}
for any $t\leq t_1$. Correspondingly, we obtain the following Lemma on $P_2^{+-}$.
\begin{lemma}\label{lem:P2strong}
 The solution $P_2^{+-}$ of \eqref{aux_P_2} is well-defined in $[s_1,\infty)$. Moreover, we have
\[P_2^{+-}(s)>0,\quad\forall~s\in[s_1,\infty).\]
\end{lemma}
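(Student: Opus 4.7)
The plan is to leverage Theorem \ref{thm:levelset}, which guarantees that along the trajectory $C_2$ of the localized system (\ref{S}$^{+-}$) passing through $(w_1,s_1)$, the Lyapunov function $\L_2^{+-}$ vanishes identically. This yields the pointwise identity
\[\sqrt{2P_2^{+-}(s(t))}=\nvm s(t)-w(t),\]
so the positivity claim $P_2^{+-}(s)>0$ reduces to the strict sign estimate $w(t)<\nvm s(t)$ for $t\leq t_1$, which is precisely \eqref{eq:C2R2}. The well-definedness of $P_2^{+-}$ on the full interval $[s_1,\infty)$ then amounts to showing that $s(t)$ sweeps out $[s_1,\infty)$ as $t$ ranges over $(-\infty,t_1]$.

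First I would observe that in Scenario I the eigenvalues $\lambda_1^{+-},\lambda_2^{+-}$ from \eqref{eq:lambda12} are real and satisfy $\lambda_2^{+-}<\lambda_1^{+-}<0$. From \eqref{eq:C2R2} I then obtain $s'(t)=w(t)-\nvm s(t)<0$ along $C_2$, so $s(t)$ is strictly decreasing in $t$. Next, inspecting the explicit formula \eqref{eq:sP2strong}, the dominant contribution as $t\to-\infty$ comes from the term $e^{\lambda_2^{+-}(t-t_1)}$, which together with a positive coefficient $(\nvm/\cM-w_1)/(\lambda_1^{+-}-\lambda_2^{+-})$ forces $s(t)\to+\infty$. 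Consequently $t\mapsto s(t)$ is a smooth bijection from $(-\infty,t_1]$ onto $[s_1,\infty)$, and with its inverse $t(s)$ the formula $P_2^{+-}(s):=\tfrac12(\nvm s-w(t(s)))^2$ furnishes a smooth solution of \eqref{aux_P_2} by Theorem \ref{thm:levelset}. Strict positivity on the entire interval $[s_1,\infty)$ is then read off directly from \eqref{eq:C2R2}.

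The technical obstacle is the sign verification $w_1<\nvm/\cM$, which underwrites both the asymptotic argument above and the strict inequality in \eqref{eq:C2R2} at $t=t_1$ itself. Using the explicit expression \eqref{eq:w1} for $w_1$ together with the Scenario I hypothesis $\nvm\geq 2\sqrt{k\cM}$, this reduces to showing $\eta_1>(\nvM-\nvm)/\cM$, a direct monotonicity check on the eigenvalue formula \eqref{eq:lambda12}. A secondary subtlety is ruling out intermediate zeros of the bracketed expression in \eqref{eq:C2R2} for $\tau=t-t_1\leq 0$; this follows by noting that the unique zero of $\tau\mapsto \lambda_1^{+-}e^{\lambda_1^{+-}\tau}-\lambda_2^{+-}e^{\lambda_2^{+-}\tau}$ lies at $\tau=\log(\lambda_1^{+-}/\lambda_2^{+-})/(\lambda_2^{+-}-\lambda_1^{+-})>0$, outside the range of interest. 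Once these sign facts are in place, the remaining verifications are routine manipulations of the constant-coefficient linear solution of (\ref{S}$^{+-}$).
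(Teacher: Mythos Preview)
Your proposal is correct and follows essentially the same route as the paper: both parametrize $P_2^{+-}$ via the trajectory of (\ref{S}$^{+-}$), invoke \eqref{eq:C2R2} for strict positivity, and read off the range $[s_1,\infty)$ from \eqref{eq:sP2strong}. One simplification worth noting: rather than your proposed ``monotonicity check'' on $\eta_1$, the sign condition $w_1<\nvm/\cM$ follows immediately from the cruder fact $w_1<0$, since along $C_1$ in $R_1$ one has $w'=k(1-\cM s)>0$, and going backward in time from $w(0)=0$ yields $w_1=w(t_1)<0<\nvm/\cM$.
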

\begin{proof}
    The trajectory $C_2$ is given by the zero level set of $\mathcal{L}_2^{+-}$, namely
    \[
    w(t) - \nu_- s(t) = -\sqrt{2P_2^{+-}(s(t))}.  
    \]
    From \eqref{eq:C2R2}, we obtain
    \[P_2^{+-}(s(t))>0,\quad\forall~t\leq t_1.\]
    Moreover, we observe from \eqref{eq:sP2strong} that
    \[\text{Range}(\{s(t):t\leq t_1\}) = [s_1,\infty).\]
    This implies that $P_2^{+-}(s)$ is well-defined and is strictly positive for all $s\in [s_1,\infty)$.
\end{proof}

Moreover, we examine the asymptotic behavior of the trajectory $(w(t),s(t))$ as $t\to-\infty$. From the relation \eqref{eq:ws}, we have
\[G(t)=\tfrac{w(t)}{s(t)},\quad \rho(t)=\tfrac{1}{s(t)}.\]
As $t\to-\infty$, the trajectory in the $(G,\rho)$ plane converges to $(G_\flat,0)$, illustrated in Figure \ref{fig:CT}. We apply \eqref{eq:P2strong} and obtain:
 \begin{equation}\label{eq:Gflat}
  G_\flat = \lim_{t\to-\infty}\frac{w(t)}{s(t)} = -\lambda_1^{+-} = \frac{\nvm-\sqrt{\nvm^2-4k\cM}}{2}.
 \end{equation}
 
 The borderline case $\nvm=2\sqrt{k\cM}$ can be treated similarly, yielding the same results.
 
\medskip\noindent(II).
In the case of \emph{median and weak alignment}, when $0 < \nu_- < 2\sqrt{kc_+}$, the solution $(w(t),s(t))$ is given as
\begin{align*}
    w(t) &= \frac{\nu_-}{c_+}-e^{-\frac{\nvm (t-t_1)}{2}}\Big(\big(\tfrac{\nvm}{\cM}-w_1\big)\cos(\theta^{+-} (t-t_1))+\big(\tfrac{\nvm}{\cM}-w_1\big)\tfrac{\nu_-}{2\theta^{+-}}\sin(\theta^{+-} (t-t_1))\Big),\\
    s(t) &= \frac{1}{\cM}-e^{-\frac{\nvm (t-t_1)}{2}}\tfrac{1}{\theta^{+-}}\big(\tfrac{\nvm}{\cM}-w_1\big)\sin(\theta^{+-}(t-t_1)).
\end{align*}
In this case, let $t_2$ be the time at which $C_2$ exits the region at $(w_2, s_2) = (w(t_2), s(t_2))$ where $w_2 = \nu_- s_2$. From this relation we solve for $t_2$:
\[
t_2 = t_1 - \frac{1}{\theta^{+-}}\bigg(\pi-\arctan\Big(\frac{2\theta^{+-}}{\nvm}\Big)\bigg).
\]
Then $(w_2, s_2)$ are given explicitly by
\begin{align}
    w_2 & = \frac{\nvm}{\cM}+\frac{\nvm}{\sqrt{k\cM}}\big(\tfrac{\nvm}{\cM}-w_1\big)z_2,\quad z_2:=\exp\bigg(\frac{\nvm(\pi-\arctan\big(\frac{2\theta^{+-}}{\nvm})\big)}{2\theta^{+-}}\bigg),\label{eq:w2}\\
    s_2 & = \frac{1}{c_+}+\frac{1}{\sqrt{kc_+}}\big(\tfrac{\nvm}{\cM}-w_1\big)z_2.\label{eq:s2}
\end{align}
In addition, we have the following lemma used to complement the proof to strong comparison principle \eqref{eq:sub2}.
\begin{lemma}\label{lem:P2weak}
The solution $P_2^{+-}$ of \eqref{aux_P_2} is well-defined in $[s_1,s_2]$. Moreover, we have $P_2^{+-}(s_2)=0$, and
\[P_2^{+-}(s)>0,\quad\forall~s\in[s_1,s_2).\]
\end{lemma}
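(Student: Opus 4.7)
The plan is to follow the strategy of Lemma \ref{lem:P2strong}, but adapted to the oscillatory regime $0 < \nvm < 2\sqrt{k\cM}$. Since $C_2$ is a trajectory of the localized system in $R_2$ emanating (backward in time) from $(w_1, s_1)$, and by construction $\mathcal{L}_2^{+-}(w_1, s_1) = 0$, Theorem \ref{thm:levelset} gives the level-set identity
\[
\sqrt{2P_2^{+-}(s(t))} \,=\, \nvm\, s(t) - w(t), \qquad t \in [t_2, t_1],
\]
provided the square root is well-defined along the trajectory. The goal is therefore to show that $s$ parametrizes $[s_1, s_2]$ bijectively via $t \in [t_2, t_1]$ and that $\nvm s(t) - w(t) \geq 0$, with equality only at $t = t_2$.

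For monotonicity, I would use that inside $R_2 = \{w < \nvm s\}$ the system \eqref{S} yields $s'(t) = w - \nvm s < 0$, so $s$ is strictly decreasing in $t$ on $(t_2, t_1)$. Combined with the endpoint values $s(t_1) = s_1 = 1/\cM$ and the explicit formula \eqref{eq:s2} for $s_2$, this delivers the required bijection, and $P_2^{+-}$ is well-defined as a graph on $[s_1, s_2]$ via the inverse $s \mapsto t(s)$.

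For the sign, the trajectory lies strictly inside $R_2$ for $t \in (t_2, t_1]$, so $\nvm s(t) - w(t) > 0$, which gives $P_2^{+-}(s) > 0$ on $[s_1, s_2)$. At $t = t_2$ the exit condition $w_2 = \nvm s_2$ delivers $P_2^{+-}(s_2) = 0$. Consistency with the ODE \eqref{aux_P_2} and the prescribed initial condition $P_2^{+-}(s_1) = \tfrac{1}{2}(w_1 - \nvm s_1)^2$ then follows automatically from Theorem \ref{thm:levelset}.

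The main obstacle, in contrast with Lemma \ref{lem:P2strong}, is that the trajectory is no longer a monotone spiral and could a priori loop back in $s$ before reaching the line $w = \nvm s$. However, the explicit formula $t_2 = t_1 - \tfrac{1}{\theta^{+-}}\bigl(\pi - \arctan(\tfrac{2\theta^{+-}}{\nvm})\bigr)$ obtained in Step 2, Case (II) shows that $t_2 < t_1$ is indeed the first backward time at which this line is reached, so no such looping occurs on $[t_2, t_1]$. This closes the argument.
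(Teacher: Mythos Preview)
Your proposal is correct and follows essentially the same approach as the paper: both arguments rest on the explicit backward-in-time solution of (\ref{S}$^{+-}$) computed in Step~2, Case~(II) of the phase-plane analysis, and both reduce the claim to verifying that $\nvm s(t)-w(t)\ge 0$ on $[t_2,t_1]$ with equality only at $t_2$. The paper does this by writing out the closed-form expression $\big(\tfrac{\nvm}{\cM}-w_1\big)e^{-\nvm(t-t_1)/2}\big(\cos(\theta^{+-}(t-t_1))-\tfrac{\nvm}{2\theta^{+-}}\sin(\theta^{+-}(t-t_1))\big)$ and reading off the sign, whereas you phrase the same fact as ``$t_2$ is the first backward time the line $w=\nvm s$ is reached'' and deduce membership in $R_2$; these are equivalent. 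Your explicit treatment of the bijectivity of $t\mapsto s(t)$ via $s'=w-\nvm s<0$ in $R_2$ is a point the paper leaves implicit, so if anything your write-up is slightly more complete there. One minor slip: you invoke ``the system \eqref{S}'' for the relation $s'=w-\nvm s$, but along $C_2$ you are solving the localized system (\ref{S}$^{+-}$); the conclusion is unaffected.
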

\begin{proof}
 We compute
 \begin{align*}
  \sqrt{2P_2^{+-}(s)} & = - w(t) + \nu_-s(t)\\
  &  = \big(\tfrac{\nvm}{\cM}-w_1\big)e^{-\frac{\nvm (t-t_1)}{2}} \Big(\cos\big(\theta^{+-}(t-t_1)\big)-\frac{\nvm}{2\theta^{+-}}\sin\big(\theta^{+-}(t-t_1)\big)\Big).
 \end{align*}
 Observe that $ \big(\tfrac{\nvm}{\cM}-w_1\big)e^{-\frac{\nvm (t-t_1)}{2}} > 0 $ and 
 \[
\cos(\theta^{+-}(t-t_1))-\frac{\nvm}{2\theta^{+-}}\sin(\theta^{+-}(t-t_1)) \geq 0, \quad \forall \ t \in [t_2, t_1],
\]  
with strict inequality holds if $t\in(t_2, t_1]$. Therefore, $P_2^{+-}$ is well-defined in $[s_1,s_2]$, and is strictly positive for $s\in[s_1,s_2)$.
\end{proof}

To make sure $(w_2,s_2)$ lie at the boundary of $R_3$, we require the first admissible condition \eqref{AC1}, namely $s_2>1/\cm$. Using \eqref{eq:s2}, we obtain an explicit expression of \eqref{AC1} as follows:
\begin{equation}\label{AC1e}\tag{AC1e}
 	\frac1\cm-\frac1\cM<\frac{1}{\sqrt{kc_+}}\big(\tfrac{\nvm}{\cM}-w_1\big)z_2.
\end{equation}
\begin{remark}\label{rem:AC1}
 The admissible condition \eqref{AC1e} holds when $\cM$ and $\cm$ are close enough. In particular, if $c$ is a constant, \eqref{AC1e} holds automatically.
 
 Moreover, when $\nu$ is a constant, the admissible condition \eqref{AC1e} coincides with the first closing condition in \cite[Theorem 1.9, Case 2.1]{choi2024critical}. To see this, we let $\nu=\nvM=\nvm$. Since $\nvm=\nvM<2\sqrt{k\cM}$, the condition \eqref{AC1e} reads:
 \[
  \frac1\cm-\frac1\cM<\frac{1}{\sqrt{kc_+}}\Big(\tfrac{\nu}{\cM}-\big(\tfrac{\nu}{\cM}- \sqrt{\tfrac{k}{\cM}}z_1\big)\Big)z_2 = \frac1\cM z_1z_2,
 \]
 or equivalently, $\frac{1+z_1z_2}{\cM}>\frac1\cm$. In \cite{choi2024critical}, the authors denote $s_+=\frac{1+z_1z_2}{\cM}$. Therefore, we arrive at their closing condition $s_+\cm>1$. 	
\end{remark}

\medskip\noindent\textbf{Step 3:} 
In region $R_3=\{(w,s):s>1/\cm, w>\nvm s\}$, we construct the trajectory $C_3$ by solving (\ref{S}$^{--}$) backward in time, subject to the initial condition
     \[
     w(t_2) = w_2,\quad s(t_2) = s_2.
     \]

There are two cases.

\medskip\noindent(I).
When $\nu_-\geq 2\sqrt{kc_-}$, the solution \((w(t),s(t))\) is given as
    \begin{align}
        w(t) & = \frac{\nu_-}{c_-}+\frac{(\lambda_2^{--})^2}{\lambda_1^{--}-\lambda_2^{--}}(s_2-\tfrac1\cm)e^{\lambda_1^{--}(t-t_2)}-\frac{(\lambda_1^{--})^2}{\lambda_1^{--}-\lambda_2^{--}}(s_2-\tfrac1\cm)e^{\lambda_2^{--}(t-t_2)} \label{eq:wN3medium},\\
        s(t) &= \frac{1}{c_-}-\frac{\lambda_2^{--}}{\lambda_1^{--}-\lambda_2^{--}}(s_2-\tfrac1\cm)e^{\lambda_1^{--}(t-t_2)}+\frac{\lambda_1^{--}}{\lambda_1^{--}-\lambda_2^{--}}(s_2-\tfrac1\cm)e^{\lambda_2^{--}(t-t_2)}. \label{eq:sN3medium}
    \end{align}
    Let $t_3$ be the time at which $C_3$ exits the region at $(w_3,s_3)$, namely
    \[
    w(t_3) = w_3, \quad s(t_3) = s_3 = \tfrac{1}{c_-}.
    \]
    Using this relation, we solve for $t_3$ and obtain
    \[
    t_3 = t_2+\frac{1}{\lambda_1^{--}-\lambda_2^{--}}\log\bigg(\frac{\lambda_1^{--}}{\lambda_2^{--}}\bigg). %= t_2+\frac{1}{\sqrt{\nu_-^2-4kc_-}}\log\bigg(\frac{\lambda_1^{--}}{\lambda_2^{--}}\bigg).
    \]
    This gives the explicit expression of $w_3$ as
    \[ w_3  = \frac{\nu_-}{c_-}+(s_2-\tfrac1\cm)\eta_3,\]
    where we use $\eta_3$ to denote
    \[\eta_3 := \frac{\lambda_1^{--}}{\lambda_2^{--}(\lambda_1^{--}-\lambda_2^{--})}\bigg((\lambda_1^{--})^2e^{\frac{\lambda_1^{--}}{\lambda_1^{--}-\lambda_2^{--}}}-(\lambda_2^{--})^2e^{\frac{\lambda_2^{--}}{\lambda_1^{--}-\lambda_2^{--}}}\bigg).
\]

\medskip\noindent(II).
When $\nu_-<\sqrt{2kc_-}$, the solution takes the following form:
\begin{align*}
    w(t) &= \frac{\nu_-}{c_-}+e^{-\frac{\nu_-(t-t_2)}{2}}\Big(\nu_-(s_2-\tfrac1\cm)\cos(\theta^{--}(t-t_2))\Big.\\
    &\hspace{1.5in}\Big.+\tfrac{\nu_-^2-4(\theta^{--})^2}{4\theta^{--}}(s_2-\tfrac1\cm)\sin(\theta^{--}(t-t_2))\Big), \\
    s(t) &= \frac{1}{c_-}+e^{-\frac{\nu_-(t-t_2)}{2}}\Big((s_2-\tfrac1\cm)\cos(\theta^{--}(t-t_2))+\frac{\nu_-}{2\theta^{--}}(s_2-\tfrac1\cm)\sin(\theta^{--}(t-t_2))\Big). 
\end{align*}
Let $t_3$ be the time at which $C_3$ exits the region at $(w_3,s_3)$. Solving for $t_3$, we obtain
\[
 t_3 = t_2-\frac{1}{\theta^{--}}\arctan\bigg(\frac{2\theta^{--}}{\nu_-}\bigg).
\]
Plugging $t_3$ into the solution to obtain $w_3$
\[
        w_3 = \frac\nvm\cm+\sqrt{k\cm}(s_2-\tfrac1\cm)z_3,\quad
        z_3:= \exp\bigg(\frac{\nvm\arctan\big({\frac{2\theta^{--}}{\nvm}}\big)}{2\theta^{--}}\bigg).
\]

To summarize, we have the explicit expression of $w_3$ as follows:
\begin{equation}\label{eq:w3}
	w_3=\begin{cases}
	\frac{\nu_-}{c_-}+(s_2-\tfrac1\cm)\eta_3,&\nu_- \geq 2\sqrt{kc_-},\\
	\frac\nvm\cm+\sqrt{k\cm}(s_2-\tfrac1\cm)z_3, & 0< \nu_- < 2\sqrt{kc_-}.
\end{cases}
\end{equation}

To make sure $(w_3,s_3)$ lie at the boundary of $R_4$, we require the second admissible condition \eqref{AC2}, namely $w_3>\nvM/\cm$. Using \eqref{eq:w3}, we obtain an explicit expression of \eqref{AC2} as follows:
\begin{equation}\label{AC2e}\tag{AC2e}
 	\nvM-\nvm < \cm (s_2-\tfrac1\cm)\times\begin{cases}
\eta_3,&\nu_- \geq 2\sqrt{kc_-},\\
	\sqrt{k\cm}z_3, & 0< \nu_- < 2\sqrt{kc_-}.	
\end{cases}
\end{equation}
\begin{remark}\label{rem:AC2}
The second admissible condition is \emph{more restrictive} than the first. Indeed, if \eqref{AC1e} is violated, then the right-hand side of \eqref{AC2e} becomes non-positive, rendering the inequality invalid.

Conversely, if \eqref{AC1e} holds, then \eqref{AC2e} is satisfied provided that $\nvM$ and $\nvm$ are sufficiently close. In particular, when $\nu$ is constant, condition \eqref{AC2e} holds automatically.
 
Moreover, when $c$ is constant, the admissible condition \eqref{AC2e} coincides with the one established in \cite{bhatnagar2023critical}. To see this, we let $c=\cM=\cm$. Since $\nvm<2\sqrt{k\cM}=2\sqrt{k\cm}$, the condition \eqref{AC2e} reads:
\begin{align*}
 \nvM-\nvm & < c \sqrt{kc}\,(s_2-\tfrac1c)z_3 = c\big(\tfrac{\nvm}{c}-w_1\big)z_2z_3\\
 & = -(\nvM-\nvm)z_2z_3 +cz_2z_3\times\begin{cases}
	\eta_1,&\nvM\geq2\sqrt{kc},\\
	\sqrt{\frac{k}{\cM}}z_1, & 0< \nvM<2\sqrt{kc}.
\end{cases}	
\end{align*}
which can be further simplified to
\[
\nvM-\nvm < \frac{cz_2z_3}{1+z_2z_3}\times\begin{cases}
	\eta_1,&\nvM\geq2\sqrt{kc},\\
	\sqrt{\frac{k}{\cM}}z_1, & 0< \nvM<2\sqrt{kc}.
\end{cases}	
\]
In the medium alignment case, the condition 
\[\nvM-\nvm < \frac{cz_2z_3}{1+z_2z_3}\eta_1\]
precisely matches the admissible conditions in \cite[Eq (2.5)]{bhatnagar2023critical}, when translated into their notation.
For the weak alignment case, a more restricted condition \eqref{AC3} is required. See Remark \ref{rem:AC3}.
\end{remark}

% Once the second admissible condition is met, we continue with the construction of the final boundary piece of $\Sigma_\flat$. 

\medskip\noindent\textbf{Step 4:} 
In region $R_4=\{(w,s): 0<s<1/\cm, w>\nvM s\}$, we construct the trajectory $C_4$ by solving (\ref{S}$^{-+}$) backwards in time,  subject to the initial condition
 \[
 w(t_3) = w_3, \quad s(t_3) = s_3 = \tfrac1\cm.
 \]

\medskip\noindent(I).
In the case of \emph{median alignment}, when $\nu_+ \geq 2\sqrt{kc_-}$, the solution takes the following form:
     \begin{align}
    w(t) &= \frac{\nu_+}{c_-}+ \frac{w_3-\frac{\nu_+}{c_-}}{\lambda_1^{-+}-\lambda_2^{-+}}\Big(-\lambda_2^{-+}\exp(\lambda_1^{-+}(t-t_3))+\lambda_1^{-+}\exp(\lambda_2^{-+}(t-t_3)\Big), \label{eq:wN4strong}\\
    s(t) &= \frac{1}{c_-}-\frac{w_3-\frac{\nu_+}{c_-}}{\lambda_1^{-+}-\lambda_2^{-+}}\Big(-\exp(\lambda_1^{-+}(t-t_3))+\exp(\lambda_2^{-+}(t-t_3))\Big).\label{eq:sN4strong}
\end{align}
Explicit computation yields
    \begin{equation}\label{eq:N4R4}
w(t)-\nvM s(t) = \frac{w_3-\frac{\nu_+}{c_-}}{\lambda_1^{-+}-\lambda_2^{-+}}\bigg(\lambda_1^{-+}\exp(\lambda_1^{-+}(t-t_3))-\lambda_2^{-+}\exp(\lambda_2^{-+}(t-t_3))\bigg) >0,    	
    \end{equation}
    for any $t\leq t_3$. This leads to the following Lemma.
\begin{lemma}\label{lem:N4strong}
 The solution $N_4^{-+}$ of \eqref{aux_N_4} is well-defined in $(-\infty,s_3]$. Moreover, we have
\[N_4^{-+}(s)>0,\quad\forall~s\in(-\infty,s_3].\]
\end{lemma}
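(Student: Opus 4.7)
The plan is to mirror the proof of Lemma \ref{lem:P2strong} (the analogous statement for $P_2^{+-}$), exploiting the fact that the trajectory $C_4$ is exactly the zero level set of the Lyapunov function $\L_4^{-+}$ defined in \eqref{eq:L2}. Along the parametrization $(w(t),s(t))$ of $C_4$ given by \eqref{eq:wN4strong}--\eqref{eq:sN4strong}, this identification reads
\[
\sqrt{2N_4^{-+}(s(t))} \;=\; w(t) - \nvM s(t), \qquad t\leq t_3,
\]
so positivity of $N_4^{-+}$ reduces to positivity of $w - \nvM s$ along $C_4$, while the domain statement reduces to determining the range of $s(t)$ for $t\in(-\infty,t_3]$.

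For positivity, I would invoke the explicit identity \eqref{eq:N4R4}, which already asserts $w(t) - \nvM s(t) > 0$ for all $t\leq t_3$; this uses $w_3 - \nvM/\cm > 0$ (a consequence of the admissible condition \eqref{AC2}) together with the fact that both eigenvalues $\lambda_1^{-+},\lambda_2^{-+}$ are real and negative in the strong alignment regime $\nvM\geq 2\sqrt{k\cm}$. Squaring then yields $N_4^{-+}(s(t))>0$ for every $t\leq t_3$.

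For the domain, I would show that the map $t\mapsto s(t)$ is a strictly increasing bijection from $(-\infty,t_3]$ onto $(-\infty,s_3]$. Strict monotonicity follows immediately from the second equation of \eqref{S}:
\[
s'(t) \;=\; w(t) - \nvM s(t) \;>\; 0,
\]
by \eqref{eq:N4R4}. The right endpoint $s(t_3)=s_3$ is built into the initial condition. For the left endpoint, I would read off from \eqref{eq:sN4strong} that as $t\to-\infty$ the dominant contribution comes from the eigenvalue of larger absolute value, namely $\lambda_2^{-+}$, and since $w_3-\nvM/\cm>0$ and $\lambda_1^{-+}-\lambda_2^{-+}>0$, the corresponding term drives $s(t)\to -\infty$. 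Combined with monotonicity, this gives $\mathrm{Range}\{s(t):t\leq t_3\}=(-\infty,s_3]$, so $N_4^{-+}$ is well-defined and strictly positive on the entire interval $(-\infty,s_3]$.

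The main (minor) obstacle is bookkeeping the signs of the eigenvalue-dependent coefficients in \eqref{eq:wN4strong}--\eqref{eq:sN4strong} to confirm the asymptotics $s(t)\to-\infty$; the borderline subcase $\nvM=2\sqrt{k\cm}$, where $\lambda_1^{-+}=\lambda_2^{-+}$, requires replacing one exponential by $(t-t_3)e^{\lambda_1^{-+}(t-t_3)}$, but the same monotonicity argument via $s'(t)>0$ still applies and is cleanest to invoke directly, so the qualitative conclusion is unchanged.
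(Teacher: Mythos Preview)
Your proposal is correct and follows essentially the same approach as the paper: identify $\sqrt{2N_4^{-+}(s(t))}=w(t)-\nvM s(t)$ along $C_4$, invoke \eqref{eq:N4R4} for positivity, and read off from \eqref{eq:sN4strong} that the range of $s(t)$ over $t\leq t_3$ is $(-\infty,s_3]$. You supply more detail than the paper (the monotonicity argument via $s'>0$ and the borderline case), but the logical skeleton is identical.
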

\begin{proof}
    The trajectory $C_4$ is given by the zero level set of $\mathcal{L}_4^{-+}$, namely
    \[
    w(t) - \nvM s(t) = \sqrt{2N_4^{-+}(s(t))}.  
    \]
    From \eqref{eq:N4R4}, we obtain
    \[N_4^{-+}(s)>0,\quad\forall~s\in(-\infty,s_3].\]
    Moreover, we observe from \eqref{eq:sN4strong} that
    \[\text{Range}(\{s(t):t\leq t_3\}) = (-\infty, s_3].\]
    This implies that $N_4^{-+}(s)$ is well-defined and is strictly positive for all $s\in (-\infty, s_3]$.
\end{proof}

Let $t_*$ be the time at which $C_4$ exits the region at $(w_*,0)$, namely
\[
w(t_*) = w_*, \quad s(t_*) = s_* = 0.
\]
Note that such $t_*$ exists as $0\in \text{Range}(\{s(t):t\leq t_3\})$.

Apply Lemma \ref{lem:N4strong} with $s=0$, we obtain
\[w_* = w(t_*) = \nvM s(t_*)+\sqrt{2N_4^{-+}(s(t_*))}=\sqrt{2N_4^{-+}(0)}>0.\]
Therefore, the admissible condition \eqref{AC3} holds automatically in this case.

\medskip\noindent(II).
In the case of \emph{weak alignment}, when $0< \nu_+ < 2\sqrt{kc_-}$, the solution then takes the following form
\begin{align}
    w(t) &= \frac{\nu_+}{c_-}+(w_3-\tfrac\nvM\cm)e^{-\frac{\nu_+(t-t_3)}{2}}\Big(\cos(\theta^{-+} (t-t_3))+\frac{\nu_+}{2\theta^{-+}}\sin(\theta^{-+} (t-t_3))\Big), \label{eq:wN4weak}\\
    s(t) &= \frac{1}{c_-}+\frac{1}{\theta^{-+}}(w_3-\tfrac\nvM\cm)e^{-\frac{\nu_+(t-t_3)}{2}}\sin(\theta^{-+}(t-t_3))\label{eq:sN4weak}.
\end{align}
In this case, the trajectory can cross $w = \nvM s$ at a time $t_4<t_3$, and $(w(t_4),s(t_4)) = (w_4,s_4)$ with $w_4 = \nvM s_4$. We compute
\begin{equation}\label{eq:N4R4weak}
w(t) - \nu_+s(t) 
      = (w_3-\tfrac\nvM\cm)e^{-\frac{\nu_+(t-t_3)}{2}}\Big(\cos(\theta^{-+}(t-t_3))-\frac{\nu_+}{2\theta^{-+}}\sin(\theta^{-+}(t-t_3))\Big),
\end{equation}
which reaches zero at:
\begin{equation}\label{time4}
t_4 = t_3 - \tfrac{1}{\theta^{-+}} \left( \pi - \arctan\left( \tfrac{2\theta^{-+}}{\nu_+} \right) \right).
\end{equation}
Then $(w_4, s_4)$ are given explicitly by
\begin{align}
    w_4 & = \frac{\nvM}{\cm}-\frac{\nvM}{\sqrt{k\cm}}\big(w_3-\tfrac{\nvM}{\cm}\big)z_4,\quad z_2:=\exp\bigg(\frac{\nvM(\pi-\arctan\big(\frac{2\theta^{-+}}{\nvM})\big)}{2\theta^{-+}}\bigg),\label{eq:w4}\\
    s_4 & = \frac{1}{\cm}-\frac{1}{\sqrt{k\cm}}\big(w_3-\tfrac{\nvM}{\cm}\big)z_4.\label{eq:s4}
\end{align}
In addition, we have the following Lemma.
\begin{lemma}\label{lem:N4weak}
    The solution $N_4^{-+}$ of \eqref{aux_N_4} is well-defined in $[s_4,s_3]$. Moreover, we have $N_4^{-+}(s_4) = 0$ and
\[
N_4^{-+}(s) > 0, \quad \forall \ s\in (s_4,s_3].
\]
\end{lemma}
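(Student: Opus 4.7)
The plan is to mirror the proof of Lemma \ref{lem:P2weak}, which handled the analogous weak-alignment regime for the sub-critical curve $C_2$, and adapt it to the curve $C_4$. The backbone of the argument is the explicit closed-form expressions \eqref{eq:wN4weak}--\eqref{eq:N4R4weak} available in the oscillatory regime $0 < \nu_+ < 2\sqrt{k\cm}$, together with Theorem \ref{thm:levelset}, which identifies $C_4$ with the zero level set of $\L_4^{-+}$.

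First, I would show that $t \mapsto s(t)$ is a strictly increasing bijection from $[t_4, t_3]$ onto $[s_4, s_3]$. By (\ref{S}$^{-+}$) we have $s'(t) = w(t) - \nvM s(t)$, which by \eqref{eq:N4R4weak} equals
\[
\bigl(w_3 - \tfrac{\nvM}{\cm}\bigr)\,e^{-\nvM(t-t_3)/2}\Bigl(\cos\bigl(\theta^{-+}(t-t_3)\bigr) - \tfrac{\nvM}{2\theta^{-+}}\sin\bigl(\theta^{-+}(t-t_3)\bigr)\Bigr).
\]
The prefactor $w_3 - \nvM/\cm$ is strictly positive by the admissible condition \eqref{AC2}, while the trigonometric factor equals $1$ at $t = t_3$ and, by the definition \eqref{time4} of $t_4$, vanishes for the first time as $t$ decreases precisely at $t_4$; by continuity it is therefore strictly positive on $(t_4, t_3]$ and zero at $t_4$. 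Hence $s$ is strictly increasing on $[t_4, t_3]$, and together with the endpoint values $s(t_3) = s_3$ and $s(t_4) = s_4$ this gives the desired bijection.

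Next, I would extract $N_4^{-+}$ from the level-set characterization. The initial condition in \eqref{aux_N_4} is chosen precisely so that $\L_4^{-+}(w_3, s_3) = 0$, so Theorem \ref{thm:levelset} yields $\L_4^{-+}(w(t), s(t)) = 0$ throughout $[t_4, t_3]$, i.e.
\[
w(t) - \nvM s(t) = \sqrt{2\,N_4^{-+}(s(t))}, \qquad t \in [t_4, t_3].
\]
Composing with the inverse $s \mapsto t(s)$ from the previous step produces a well-defined $N_4^{-+}(s) = \tfrac12(w(t(s)) - \nvM s)^2$ on the entire interval $[s_4, s_3]$. The strict positivity of $w(t) - \nvM s(t)$ on $(t_4, t_3]$ immediately yields $N_4^{-+}(s) > 0$ on $(s_4, s_3]$, while its vanishing at $t_4$ yields $N_4^{-+}(s_4) = 0$.

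The one potentially delicate point is the strict positivity of the trigonometric factor on $(t_4, t_3]$, which must be justified via the minimality of $t_4$ as the first backward zero of $\cos(\phi) - \tfrac{\nvM}{2\theta^{-+}}\sin(\phi)$ below $\phi = 0$; once this is in hand, both claims of the lemma reduce to reading off sign information from \eqref{eq:N4R4weak}, exactly as in Lemma \ref{lem:P2weak}.
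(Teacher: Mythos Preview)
Your proposal is correct and follows essentially the same approach as the paper: both use the explicit formula \eqref{eq:N4R4weak} for $w(t)-\nvM s(t)$, the positivity of the prefactor $w_3-\nvM/\cm$ (from \eqref{AC2}), the sign of the trigonometric factor on $(t_4,t_3]$, and the monotonicity of $s$ on $[t_4,t_3]$. Your write-up is somewhat more detailed (explicitly invoking Theorem~\ref{thm:levelset} and spelling out the bijection), but the argument is the same.
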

\begin{proof}
  From the definition of $N_4^{-+}$ and \eqref{eq:N4R4weak}, we have
  \[
      \sqrt{2N_4^{-+}(s(t))} =(w_3-\tfrac\nvM\cm)e^{-\frac{\nu_+(t-t_3)}{2}}\Big(\cos(\theta^{-+}(t-t_3))-\frac{\nu_+}{2\theta^{-+}}\sin(\theta^{-+}(t-t_3))\Big)>0,
  \]
  for any $t\in(t_4,t_3]$, where we have used the fact that $(w_3 - \frac{\nu_+}{c_-})e^{-\frac{\nu_+(t-t_3)}{2}} > 0$.
  
  Therefore, $N_4^{-+}$ is well-defined in $[s_4,s_3]$ and strictly positive for any $s\in (s_4,s_3]$. Moreover, $s$ is monotone in $[t_4,t_3]$. Therefore, $N_4^{-+}$ is well-defined in $[s_4,s_3]$, and is strictly positive for $s\in(s_4,s_3]$.
\end{proof}

The following lemma provides an equivalent formulation of the third admissible condition \eqref{AC3}: there exists a time $t_*$ such that $s(t_*)=0$ and $w(t_*)=w_*\geq0$.

\begin{lemma}\label{lem:AC3}
    The admissible condition \eqref{AC3} holds if and only if 
    \begin{equation}\label{AC3p}\tag{AC3'}
     s_4 \leq 0.	
    \end{equation}
\end{lemma}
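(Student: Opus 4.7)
The plan is to exploit the Lyapunov parameterization of $C_4$ given by \eqref{aux_N_4} together with its domain established in Lemma \ref{lem:N4weak}. That lemma yields $\textnormal{Dom}(N_4^{-+}) = [s_4, s_3]$ with $N_4^{-+}(s_4) = 0$ and $N_4^{-+}(s) > 0$ for $s \in (s_4, s_3]$. Consequently $C_4$ can be written as
\[
C_4 = \big\{(w, s) : w = \nvM s + \sqrt{2 N_4^{-+}(s)}, \, s \in [s_4, s_3]\big\},
\]
and the question of whether $C_4$ meets the $w$-axis reduces to whether $0$ lies in this parameterization interval.

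For the implication $s_4 \leq 0 \Rightarrow \eqref{AC3}$, I would observe that $s_3 = 1/\cm > 0$, so $s_4 \leq 0$ immediately gives $0 \in [s_4, s_3]$. The Lyapunov parameterization then delivers the intersection point $(w_*, 0)$ with
\[
w_* = \sqrt{2 N_4^{-+}(0)} \geq 0,
\]
verifying \eqref{AC3}. For the converse direction, if $s_4 > 0$ then $0 \notin [s_4, s_3]$, and $C_4$ as parameterized above never reaches $s = 0$. In this case, the construction produces no terminal point $(w_*, 0)$ on the $w$-axis, and so \eqref{AC3} fails.

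I do not anticipate a substantive obstacle, since the argument follows essentially immediately once Lemma \ref{lem:N4weak} is in hand. The only subtle point is the interpretation of the admissible condition: when $s_4 > 0$, the backward trajectory of (\ref{S}$^{-+}$) exits the region $R_4$ tangentially through the line $w = \nvM s$ at $(\nvM s_4, s_4)$ before reaching the $w$-axis, so the Lyapunov-parameterized curve $C_4$ on which the construction of $\ctsub$ depends does not close up. Conversely, for $s_4 \leq 0$ the trajectory crosses the $w$-axis first, within the region where $N_4^{-+}$ remains nonnegative, which automatically enforces $w_* \geq 0$.
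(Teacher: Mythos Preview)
Your proposal is correct and follows essentially the same approach as the paper: both directions hinge on Lemma~\ref{lem:N4weak}, which identifies $\textnormal{Dom}(N_4^{-+})=[s_4,s_3]$, so that $0$ lies in this interval precisely when $s_4\le 0$, and in that case $w_*=\sqrt{2N_4^{-+}(0)}\ge 0$. The only cosmetic difference is that the paper invokes the intermediate value theorem on $s(t)$ between $t_4$ and $t_3$ to locate $t_*$, whereas you read off the intersection directly from the $s$-parameterization; the content is the same.
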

\begin{proof}
 If \eqref{AC3} holds, then $\sqrt{2N_4^{-+}(0)}=w_*\geq0$ is well-defined. From Lemma \ref{lem:N4weak}, we know $0\in[s_4,s_3]$, and therefore $s_4\leq0$.

 Now assume $s_4 \leq0$. Note that $s(t_3)>0$ and $s(t_4)\leq0$. Then by intermediate value theorem, there exists a time $t_*\in[t_4,t_3)$ such that $s(t_*)=0$. Moreover, we apply Lemma \ref{lem:N4weak} and obtain $w_* = \sqrt{2N_4^{-+}(0)}\geq0$.
\end{proof}

Using \eqref{eq:s4}, we obtain the following explicit expression of the condition \eqref{AC3p}:
\begin{equation}\label{AC3e}\tag{AC3e}
	\frac1{\sqrt{\cm}}\leq \frac1{\sqrt{k}}(w_3-\tfrac\nvM\cm)z_4.
\end{equation}
It is the \emph{most restrictive} condition among the three. Indeed, if \eqref{AC2e} is violated, the right-hand side of \eqref{AC3e} is non-positive, and the inequality cannot hold.

However, this condition is only required in the most restrictive setup, where $\nvM<2\sqrt{k\cm}$. Plugging in the explicit expressions of $w_3$, $w_2$ and $w_1$ would yield the following (completely explicit) condition:
\begin{equation}\label{eq:AC3}
(\nvM-\nvm)\frac1{\sqrt{k\cm}}\Big(\frac1\cm+\sqrt{\frac{\cm}{\cM^3}}z_2z_3\Big)z_4+\Big(\frac1\cm-\frac1\cM\Big)(z_3z_4+1)\leq\frac1\cM (z_1z_2z_3z_4-1).        
\end{equation}
Note that $z_i>1$. Therefore, the condition \eqref{eq:AC3} is satisfied when $\nvM$ and $\nvm$, as well as $\cM$ and $\cm$, are sufficiently close.

\begin{remark}\label{rem:AC3}
We examine the condition \eqref{eq:AC3} for two special cases. First, when $\nu$ is a constant, \eqref{eq:AC3} becomes
\[z_3z_4\big(\tfrac{1+z_1z_2}{\cM}\cm-1\big)\geq1,\quad \text{i.e.}\quad e^{\gamma_-}(s_+c_--1)\geq1,\]
which is precisely the closing condition in \cite[Theorem 1.9, Case 2.2]{choi2024critical}, using their notation $s_+=\frac{1+z_1z_2}{c_+}$ and $\gamma_-=z_3z_4$.

Next, when $c$ is constant, \eqref{eq:AC3} becomes
\[
  \nvM-\nvm \leq \sqrt{kc}\,\frac{z_1z_2z_3z_4-1}{(1+z_2z_3)z_4},
\]
which is precisely the admissible condition in \cite[Eq (2.4)]{bhatnagar2023critical}.

Hence, our results cover both works \cite{bhatnagar2023critical,choi2024critical} as special cases.
\end{remark}

\subsection{Supercritical threshold}
The construction of the supercritical threshold follows a similar strategy to that of the subcritical region; below, we present only the essential quantities and expressions.

\medskip\noindent\textbf{Step 1:} 
In region $\Rt_1=\{(w,s): 0<s<1/\cm, w<\nvm s\}$, we construct the trajectory $\Ct_1$ by solving (\ref{S}$^{--})$ backward in time, subject to the initial condition
\[
w(0) = 0, \quad s(0) = 0.
\]
Let $\tt_1$ be the time at which the trajectory $\Ct_1$ exits the region $\Rt_1$ at $(w(\tt_1),s(\tt_1)) = (\wt_1,\st_1)$. Then we have the explicit expressions of $\wt_1$ as follows:
\begin{equation}\label{eq:wt1}
	\wt_1=\begin{cases}
	\frac{\nu_-}{c_-} - \Big(\frac{\lambda_2^{--}}{\lambda_1^{--}}\Big)^{\frac{\nu_-}{2(\lambda_1^{--}-\lambda_2^{--})}}\sqrt{\frac{k}{c_-}},&\nvm\geq2\sqrt{k\cm},\\
	%\sqrt{\frac{k}{c_-}}(2-e)& \nvm=2\sqrt{k\cm}\\
	\frac{\nu_-}{c_-}- \sqrt{\frac{k}{c_-}}z_3, & 0< \nvm<2\sqrt{k\cm}.
\end{cases}
\end{equation}

\medskip\noindent\textbf{Step 2:} 
In region $\Rt_2=\{(w,s):s>1/\cm, w<\nvM s\}$, we construct the trajectory $\Ct_2$ by solving (\ref{S}$^{-+}$) backward in time, subject to the initial condition 
\[
w(\tt_1) = \wt_1, \quad s(\tt_1) = \st_1.
\]

\medskip\noindent(I).
In the cases of \emph{strong alignment} and \emph{median alignment}, when $\nu_+ \geq 2\sqrt{kc_-}$, the trajectory $\Ct_2$ won't exit $\Rt_2$. Moreover, we have the asymptotic behavior of the trajectory $(w(t), s(t))$ as $t \rightarrow -\infty$, and consequently the trajectory in the $(G,\rho)$ converges to $(G_\sharp,0)$, illustrated in Figure \ref{fig:CT}, where 
    \begin{equation}\label{eq:Gsharp}
    G_\sharp = -\lambda_1^{-+} = \frac{\nu_+-\sqrt{\nu_+^2-4kc_-}}{2}.
    \end{equation}
The borderline case $\nu_+ = 2\sqrt{kc_-}$ can be treated similarly, and we obtain the same results.

\medskip\noindent(II).
In the case of \emph{weak alignment}, when $\nu_+<2\sqrt{kc_-}$, let $\tt_2$ be the time at which the trajectory $\Ct_2$ exits the region at $(w(\tt_2),s(\tt_2))=(\wt_2,\st_2)$ where
\begin{align}
     \wt_2 & = \frac{\nvM}{\cm}+\frac{\nvM}{\sqrt{k\cm}}\Big(\frac{\nu_+}{c_-}-\wt_1\Big)z_4,\label{eq:wt2}\\
    \st_2 & = \frac{1}{c_-}+\frac{1}{\sqrt{kc_-}}\Big(\frac{\nu_+}{c_-}-\wt_1\Big)z_4.\label{eq:st2}
\end{align}
We utilize the explicit expression to verify the first admissible condition in \eqref{eq:ACsup}.
\begin{proposition}\label{prop:superAC1}
 In the case of weak alignment, the admissible condition $\st_2>\frac{1}{\cM}$ holds.	
\end{proposition}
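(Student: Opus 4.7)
The plan is to read the conclusion directly off the closed-form expression \eqref{eq:st2} for $\st_2$, combined with the weak-alignment branch of \eqref{eq:wt1} for $\wt_1$. Since weak alignment means $\nvM<2\sqrt{k\cm}$ and $\nvm\le\nvM$, we automatically have $\nvm<2\sqrt{k\cm}$ as well, so the second branch of \eqref{eq:wt1} applies and
\[
\wt_1 \;=\; \frac{\nvm}{\cm} \;-\; \sqrt{\tfrac{k}{\cm}}\,z_3.
\]

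Substituting this expression into \eqref{eq:st2} gives
\[
\st_2 \;=\; \frac{1}{\cm} \;+\; \frac{z_4}{\sqrt{k\cm}}\left(\frac{\nvM-\nvm}{\cm}+\sqrt{\tfrac{k}{\cm}}\,z_3\right).
\]
The next step is to show that the correction term is strictly positive. The quantity in parentheses has a nonnegative first summand (by \eqref{A5}, $\nvM\geq\nvm$) and a strictly positive second summand (since $z_3>0$ by its definition as an exponential). The prefactor $z_4>0$ for the same reason. Hence the whole correction is strictly positive, yielding $\st_2>\frac{1}{\cm}$. Finally, combining with $\cm\leq\cM$ from \eqref{A3} gives $\st_2>\frac{1}{\cm}\geq\frac{1}{\cM}$, which is the claimed admissibility condition.

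There is essentially no analytic obstacle here: once the phase-plane analysis has produced the explicit formulas \eqref{eq:wt1} and \eqref{eq:st2}, the inequality $\st_2>\frac{1}{\cM}$ is automatic. This is in sharp contrast to the subcritical condition \eqref{AC1e}, whose verification required a delicate balance between $\cm,\cM,\nvm,\nvM$. The structural reason for the asymmetry is that on the supercritical side the quantity $\frac{\nvM}{\cm}-\wt_1$ is a sum of two nonnegative contributions (the gap $(\nvM-\nvm)/\cm$ and the oscillatory term $\sqrt{k/\cm}\,z_3$), whereas the analogous quantity $\frac{\nvm}{\cM}-w_1$ on the subcritical side involves a difference that may vanish, which is what forces a nontrivial admissible condition there.
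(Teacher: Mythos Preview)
Your proof is correct and follows essentially the same route as the paper: use the explicit formula \eqref{eq:st2} together with the bound $\wt_1<\tfrac{\nvm}{\cm}$ from \eqref{eq:wt1} to deduce $\st_2>\tfrac{1}{\cm}\ge\tfrac{1}{\cM}$. The paper's version is marginally shorter because it only records the inequality $\wt_1<\tfrac{\nvm}{\cm}$ rather than substituting the full second-branch expression, but the logic is identical.
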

\begin{proof}
 From \eqref{eq:wt1} we have $\wt_1<\frac\nvm\cm$. Then we obtain from \eqref{eq:st2} that
 \[\st_2>\frac1\cm + \frac{1}{\sqrt{kc_-}}\Big(\frac\nvM\cm-\frac\nvm\cm\Big)z_4\geq\frac1\cm\geq\frac1\cM.\]
\end{proof}

\medskip\noindent\textbf{Step 3:} In region $\Rt_3=\{(w,s):s>1/\cM, w>\nvM s\}$, we construct the trajectory $\Ct_3$ by solving (\ref{S}$^{++}$) backward in time, subject to the initial condition
\[
w(\tt_2) = \wt_2, \quad s(\tt_2) = \st_2.
\]
We observe that this step only applies to the case of weak alignment, when $\nu_+<2\sqrt{kc_+}$. The explicit solution is given by
\begin{align*}
    w(t) &= \frac{\nu_+}{c_+} + e^{-\frac{\nu_+(t - \tt_2)}{2}} \Big( 
        \nu_+\left(\st_2 - \tfrac{1}{c_+}\right) \cos\left(\theta^{++}(t - \tt_2)\right) \\
    &\hspace{1.5in} + \tfrac{\nu_+^2 - 4(\theta^{++})^2}{4\theta^{++}} \left(\st_2 - \tfrac{1}{c_+}\right) \sin\left(\theta^{++}(t - \tt_2)\right) 
    \Big), \\
    s(t) &= \frac{1}{c_+} + e^{-\frac{\nu_+(t - \tt_2)}{2}} \Big( 
        \left(\st_2 - \tfrac{1}{c_+}\right) \cos\left(\theta^{++}(t - \tt_2)\right) + 
        \frac{\nu_+}{2\theta^{++}} \left(\st_2 - \tfrac{1}{c_+}\right) \sin\left(\theta^{++}(t - \tt_2)\right) 
    \Big).
\end{align*}
Let $\tt_3$ be the time at which $\Ct_3$ exits the region at $(w(\tt_3),s(\tt_3)) = (\wt_3,\st_3)$. Then solving for $\tt_3$ from $s(\tt_3) = 1/c_+$ we obtain
\[
\tt_3 = \tt_2 - \frac{1}{\theta^{++}} \arctan\left( \frac{2\theta^{++}}{\nu_+} \right).
\]
Consequently, we have 
\begin{equation}\label{eq:wt3}
    \wt_3 = w(\tt_3)  = \frac{\nu_+}{c_+} + \sqrt{kc_+}\left(\st_2 - \tfrac{1}{c_+}\right) z_1.
\end{equation}
Therefore, the second  admissible condition in \eqref{eq:ACsup} holds.

\begin{proposition}\label{prop:superAC2}
 The admissible condition $\wt_3>\frac{\nvm}{\cM}$ holds.	
\end{proposition}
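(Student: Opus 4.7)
The plan is to establish the strict inequality $\wt_3 > \nvm/\cM$ by combining the explicit expression for $\wt_3$ in \eqref{eq:wt3} with the positivity result from Proposition \ref{prop:superAC1}. The argument is essentially a direct computation, so the proof will be short.

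First, I would recall the formula
\[
\wt_3 = \frac{\nu_+}{c_+} + \sqrt{kc_+}\left(\st_2 - \tfrac{1}{c_+}\right) z_1
\]
from \eqref{eq:wt3}, together with the fact that $c_+ = \cM$ and $\nu_+ = \nvM$. Observing that $z_1 > 0$ by its exponential definition and that $\st_2 > 1/\cM = 1/c_+$ by Proposition \ref{prop:superAC1}, I conclude that the second summand is strictly positive.

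Therefore
\[
\wt_3 > \frac{\nu_+}{c_+} = \frac{\nvM}{\cM} \geq \frac{\nvm}{\cM},
\]
where the last inequality uses \eqref{A5}, namely $\nvM \geq \nvm$. This is exactly the admissible condition in \eqref{eq:ACsup}.

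There is essentially no obstacle here; the result is an immediate consequence of Proposition \ref{prop:superAC1} and the lower bound $\nvM \geq \nvm$. The only small subtlety is making sure the strict inequality $\st_2 > 1/\cM$ (rather than a non-strict one) is available from Proposition \ref{prop:superAC1}, which indeed follows from its proof via $\wt_1 < \nvm/\cm$ and $\nvM \geq \nvm$.
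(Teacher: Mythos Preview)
Your proof is correct and follows essentially the same route as the paper: the paper likewise uses the explicit formula \eqref{eq:wt3} together with $\st_2>1/\cM$ (obtained there directly from \eqref{eq:st2}, which is exactly what underlies Proposition~\ref{prop:superAC1}) to conclude $\wt_3>\nvM/\cM\geq\nvm/\cM$. One trivial remark: the inequality $\nvM\geq\nvm$ comes from the definitions \eqref{eq:nupm} (via \eqref{A5}), not from \eqref{A5} alone.
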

\begin{proof}
 From \eqref{eq:st2} we get $\st_2>\frac{1}{\cm}\geq\frac{1}{\cM}$. Then
 we obtain from \eqref{eq:wt3} that
 $\wt_3 >\frac{\nvM}{\cM}\geq\frac{\nvm}{\cM}.$
\end{proof}

highlighted in the construction of subcritical region is automatically satisfied
\[
\frac{\nu_+}{c_+} + \sqrt{kc_+}\left(\st_2 - \tfrac{1}{c_+}\right) z_1 > \frac{\nu_+}{c_+}.
\]
In addition, we have the following lemma used to complement the proof of Proposition \ref{prop:blowup}.
\begin{lemma}\label{lem:Nt3}
The solution $\Nt_3^{++}$ is well-defined in $[\st_3,\st_2]$. Moreover, we have $\Nt_3^{++}(\st_2)=0$, and
\[\Nt_3^{++}(s)>0,\quad\forall~s\in[\st_3,\st_2).\]
\end{lemma}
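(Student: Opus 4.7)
The approach is to mirror the argument of Lemma~\ref{lem:P2weak} (and its analogue Lemma~\ref{lem:N4weak}) by exploiting the fact that, along the trajectory of (\ref{S}$^{++}$) defining $\Ct_3$, the function $\Nt_3^{++}$ is encoded in the Lyapunov identity
\[
\sqrt{2\Nt_3^{++}(s(t))} \;=\; w(t) - \nvM\,s(t), \qquad t\in[\tt_3,\tt_2],
\]
which comes from $\Ct_3 \subset \{\Lt_3^{++}=0\}$ together with the initial condition $\Nt_3^{++}(\st_2)=0$. Once this identity is in hand, the three claims of the lemma reduce to a sign analysis of the right-hand side together with a monotonicity check on $s(t)$.

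For the sign analysis, I would substitute the explicit formulas for $w(t)$ and $s(t)$ from Step~3 of Section~\ref{sec:phase} and simplify. The constant parts cancel, as do the cosine contributions, while the identity $\nvM^2+4(\theta^{++})^2=4k\cM$ collapses the remaining sine coefficients into
\[
w(t)-\nvM s(t) \;=\; -\,\frac{k\cM}{\theta^{++}}\Big(\st_2-\tfrac{1}{\cM}\Big)\,e^{-\tfrac{\nvM(t-\tt_2)}{2}}\,\sin\big(\theta^{++}(t-\tt_2)\big).
\]
By Proposition~\ref{prop:superAC1} the factor $\st_2-1/\cM$ is strictly positive, and from $\tt_3 = \tt_2 - \tfrac{1}{\theta^{++}}\arctan(2\theta^{++}/\nvM)$ in Step~3 the argument $\theta^{++}(t-\tt_2)$ ranges over $[-\arctan(2\theta^{++}/\nvM),0]\subset(-\tfrac{\pi}{2},0]$ for $t\in[\tt_3,\tt_2]$. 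Thus $\sin(\theta^{++}(t-\tt_2))<0$ strictly on $[\tt_3,\tt_2)$ and vanishes at $t=\tt_2$, giving $w(t)-\nvM s(t)>0$ on $[\tt_3,\tt_2)$ and $=0$ at $t=\tt_2$.

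Finally, to legitimize viewing $\Nt_3^{++}$ as a function of $s$, I would note that $s'(t)=w(t)-\nvM s(t)$ is precisely the quantity just shown to be strictly positive on $(\tt_3,\tt_2)$. Hence $s(\cdot)$ is strictly increasing on $[\tt_3,\tt_2]$ and maps this interval bijectively onto $[\st_3,\st_2]$, so the Lyapunov identity transfers to yield a well-defined $\Nt_3^{++}\in C^1([\st_3,\st_2])$ with the asserted sign and boundary behavior. I anticipate the main bookkeeping burden to be the trigonometric collapse to the single-sine formula above; no additional admissibility hypothesis is needed, since Proposition~\ref{prop:superAC1} already secures $\st_2>1/\cM$, which is the only geometric ingredient used in the positivity step.
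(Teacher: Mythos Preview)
Your proposal is correct and follows essentially the same approach as the paper: both compute $w(t)-\nvM s(t)$ explicitly along the trajectory of (\ref{S}$^{++}$), collapse it to a single sine term (the paper writes the coefficient as $\tfrac{\nvM^2+4(\theta^{++})^2}{4\theta^{++}}$, which is your $\tfrac{k\cM}{\theta^{++}}$), and read off the sign on $[\tt_3,\tt_2)$. Your write-up is in fact slightly more careful than the paper's, since you explicitly invoke Proposition~\ref{prop:superAC1} for $\st_2>1/\cM$ and justify the bijection $t\mapsto s(t)$ via $s'(t)=w(t)-\nvM s(t)>0$, whereas the paper leaves these points implicit.
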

\begin{proof}
Apply \eqref{eq:wt2} and \eqref{eq:st2} to compute
 \[
  \sqrt{2\Nt_3^{++}(s)} = w(t) - \nu_+s(t) = e^{-\frac{\nu_+(t - \tt_2)}{2}} \,\frac{\nu_+^2 + 4(\theta^{++})^2}{4\theta^{++}}\,  \big(\st_2 - \tfrac{1}{c_+} \big) \sin\left(-\theta^{++}(t - \tt_2)\right)>0,
 \]
for all $t \in [\tt_3, \tt_2)$. Therefore, $\Nt_3^{++}$ is well-defined and strictly positive on the corresponding interval $[\st_3, \st_2)$.
\end{proof}

\medskip\noindent\textbf{Step 4:} In region $\Rt_4=\{(w,s): 0<s<1/\cM, w>\nvm s\}$, we construct the trajectory $\Ct_4$ by solving (\ref{S}$^{+-}$) backward in time, subject to the initial condition
\[
w(\tt_3) = \wt_3, \quad s(\tt_3) = \st_3 = 1/c_-.
\]
Since $\nvm<2\sqrt{k\cM}$, the solution takes the following explicit form:
\begin{align}
    w(t) &= \frac{\nu_-}{c_+}+\big(\wt_3-\frac{\nu_-}{c_+}\big)e^{-\frac{\nu_-(t-\tt_3)}{2}}\bigg[\cos(\theta^{+-} (t-\tt_3))+\frac{\nu_-}{2\theta^{+-}}\sin(\theta^{+-} (t-\tt_3))\bigg], \label{eq:wt4}\\
    s(t) &= \frac{1}{c_+}+\frac{1}{\theta^{+-}}\big(\wt_3-\frac{\nu_-}{c_+}\big)e^{-\frac{\nu_-(t-\tt_3)}{2}}\sin(\theta^{+-}(t-\tt_3))\label{eq:st4}.
\end{align}
We claim that the third admissible condition in \eqref{eq:ACsup} holds.
\begin{proposition}\label{prop:superAC3}
	There exists a time $\tt_*<\tt_3$ such that $s(\tt_*)=0$ and $\wt_*=w(\tt_*)>0$.
\end{proposition}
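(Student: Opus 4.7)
The plan is to analyze the explicit formulas \eqref{eq:wt4}--\eqref{eq:st4} for $\Ct_4$ integrated backward from $(\wt_3, 1/\cm)$. Introduce the backward time variable $\sigma := \tt_3 - t > 0$ and set $A := \wt_3 - \nu_-/c_+ = \wt_3 - \nvm/\cM$, which is strictly positive by Proposition~\ref{prop:superAC2}. Substituting $t = \tt_3 - \sigma$ into \eqref{eq:st4} and \eqref{eq:wt4} and using $\sin(-x) = -\sin x$ gives the clean identities
\[
s(\tt_3 - \sigma) = \tfrac{1}{\cM} - g(\sigma), \qquad w(\tt_3 - \sigma) - \nvm\, s(\tt_3 - \sigma) = A\,e^{\nvm\sigma/2}\,h(\sigma),
\]
where
\[
g(\sigma) := \tfrac{A}{\theta^{+-}}\,e^{\nvm \sigma/2}\sin(\theta^{+-}\sigma), \qquad h(\sigma) := \cos(\theta^{+-}\sigma) + \tfrac{\nvm}{2\theta^{+-}}\sin(\theta^{+-}\sigma).
\]
Thus the claim $s(\tt_*) = 0$ and $\wt_* > 0$ reduces to producing $\sigma_* > 0$ with $g(\sigma_*) = 1/\cM$ and $h(\sigma_*) > 0$.

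Standard analysis shows that $g$ vanishes at $\sigma = 0$ and $\sigma = \pi/\theta^{+-}$ and is strictly increasing on $[0, \sigma_{\max}]$ up to a unique maximum at $\sigma_{\max}$ determined by $\theta^{+-}\sigma_{\max} = \pi - \arctan(2\theta^{+-}/\nvm)$. Using the identity $\nvm^2 + 4(\theta^{+-})^2 = 4k\cM$, I compute $g(\sigma_{\max}) = A\,z_2/\sqrt{k\cM}$, with $z_2$ as in \eqref{eq:w2}. To obtain the required $\sigma_* \in (0,\sigma_{\max})$, it suffices to verify $A z_2 > \sqrt{k/\cM}$. Substituting \eqref{eq:wt3} gives $A \geq \sqrt{k\cM}(\st_2 - 1/\cM)\, z_1$; then $\cm \leq \cM$ implies $\st_2 - 1/\cM \geq \st_2 - 1/\cm$; finally \eqref{eq:st2} together with \eqref{eq:wt1} gives $\st_2 - 1/\cm \geq z_3 z_4/\cm$. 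Hence
\[
A z_2 \;\geq\; \frac{\sqrt{k\cM}\, z_1 z_2 z_3 z_4}{\cm} \;>\; \frac{\sqrt{k\cM}}{\cm} \;\geq\; \sqrt{k/\cM},
\]
the strict middle inequality using $z_i > 1$ for each $i$. This yields the desired $\sigma_* \in (0,\sigma_{\max})$, unique by monotonicity of $g$.

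Finally, to confirm $h(\sigma_*) > 0$, observe that $h(\sigma) = 0$ on $(0,\pi/\theta^{+-})$ is equivalent to $\tan(\theta^{+-}\sigma) = -2\theta^{+-}/\nvm$, whose unique solution in this interval is precisely $\sigma_{\max}$. Since $h(0) = 1 > 0$, continuity forces $h > 0$ on $[0,\sigma_{\max})$, and in particular $h(\sigma_*) > 0$. This gives $\wt_* = A\, e^{\nvm\sigma_*/2}\, h(\sigma_*) > 0$, and the same positivity of $h$ on $[0,\sigma_*] \subset [0,\sigma_{\max})$ guarantees that $w > \nvm s$ along the trajectory throughout, so it stays in $\Rt_4$ and $\tt_* := \tt_3 - \sigma_* < \tt_3$ is indeed the first backward time at which $s$ vanishes. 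The main obstacle is the chain of substitutions in the second paragraph, unraveling $A z_2$ through the nested definitions of $\wt_3, \st_2, \wt_1$ and exploiting the strict inequalities $z_i > 1$ together with $\cm \leq \cM$; once the decomposition $\wt_* = A\, e^{\nvm\sigma_*/2}\, h(\sigma_*)$ is recognized, the positivity of $\wt_*$ is almost immediate.
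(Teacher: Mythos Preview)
Your proof is correct, but the route differs from the paper's. Both arguments begin by unraveling the nested formulas for $\wt_1,\st_2,\wt_3$ via \eqref{eq:wt1}, \eqref{eq:st2}, \eqref{eq:wt3} together with $z_i>1$ to get a lower bound on $A=\wt_3-\nvm/\cM$. From there the paper works on the shorter interval $[\tt_3-\tfrac{\pi}{2\theta^{+-}},\tt_3]$: it shows directly from \eqref{eq:st4} that $s(\tt_3-\tfrac{\pi}{2\theta^{+-}})<0$, and then, since $\theta^{+-}(\tt_*-\tt_3)\in(-\tfrac{\pi}{2},0)$, rewrites the $\sin$ term in \eqref{eq:wt4} as $\tfrac{\nvm}{2}\big(s(\tt_*)-\tfrac1\cM\big)$ and uses $\cos>0$ to obtain the quantitative bound $\wt_*>\tfrac{\nvm}{2\cM}$. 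You instead locate the maximum of $g$ at $\sigma_{\max}$ (which lies beyond $\tfrac{\pi}{2\theta^{+-}}$), compute $g(\sigma_{\max})=Az_2/\sqrt{k\cM}$ exactly, and exploit the structural identity $g'(\sigma)=A\,e^{\nvm\sigma/2}h(\sigma)$ to conclude $h>0$ on $[0,\sigma_{\max})$; this is a clean observation and makes the positivity $\wt_*=A\,e^{\nvm\sigma_*/2}h(\sigma_*)>0$ immediate. What your argument does not deliver is the explicit lower bound $\wt_*>\tfrac{\nvm}{2\cM}$, which the paper's proof produces and subsequently feeds into Lemma~\ref{lem:Nt4}; your proposal thus settles Proposition~\ref{prop:superAC3} as stated but would need a supplementary estimate before that lemma.
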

\begin{proof}
 Apply \eqref{eq:wt1}, \eqref{eq:st2}, \eqref{eq:wt3} and utilize the fact that $z_i>1$ to obtain
 \begin{align*}
&\wt_1< \frac{\nvm}{\cm}-\sqrt{\frac{k}{\cm}},\quad \st_2> \frac{1}{\cm}+\frac{1}{\sqrt{k\cm}}\Big(\frac{\nvM-\nvm}{\cm}+\sqrt{\frac{k}{\cm}}\Big)\geq \frac2\cm,\quad\text{and}\\
&\wt_3> \frac\nvM\cM + \sqrt{k\cM}\Big(\frac2\cm-\frac1\cM\Big)\geq \frac\nvM\cM + \sqrt{\frac k\cM}.
 \end{align*}
Then, for any $t\in[\tt_3-\frac{\pi}{2\theta^{+-}},\tt_3)$, we deduce from \eqref{eq:st4} and get
\[
s(t)< \frac1\cM + \frac1{\sqrt{k\cM}}\cdot\Big(\frac{\nvM-\nvm}{\cM}+\sqrt{\frac k\cM}\Big)\cdot 1 \cdot \sin\big(\theta^{+-}(t-\tt_3)\big)\leq \frac{1}{\cM}\Big(1+\sin\big(\theta^{+-}(t-\tt_3)\big)\Big).
\]
In particular, we have $s(\tt_3-\frac{\pi}{2\theta^{+-}})<0$. By continuity of $s$, we conclude that there exists a $\tt_*\in (\tt_3-\frac{\pi}{2\theta^{+-}},\tt_3)$ such that $s(\tt_*)=0$.

Moreover, we estimate from \eqref{eq:wt4} and obtain the bound
\begin{align*}
 \wt_* & = w(\tt_*) = \frac{\nu_-}{c_+}+\big(\wt_3-\frac{\nu_-}{c_+}\big)e^{-\frac{\nu_-(t-\tt_3)}{2}}\cos\big(\theta^{+-} (\tt_*-\tt_3)\big)+\frac{\nu_-}{2}\big(s(\tt_*)-\frac1\cM\big)\\
 & > \frac{\nvm}{\cM} - \frac{\nu_-}{2\cM} = \frac{\nu_-}{2\cM} >0.
\end{align*}
This finishes the proof.
\end{proof}

Finally, we establish the following refined estimate, showing that the trajectory $\Ct_4$ remains uniformly separated from the line $w = \nvM s$, thereby supporting the proof of Proposition \ref{prop:blowup}.
\begin{lemma}\label{lem:Nt4}
 For any $t\in[\tt_*,\tt_3]$, we have the uniform bound
 \[
 w(t)-\nvM s(t) > \min\big\{\sqrt{\tfrac k\cM},\tfrac{\nvm}{2\cM}\big\}.
 \]
\end{lemma}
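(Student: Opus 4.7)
The plan is to reparametrize the trajectory $\Ct_4$ by $s$ rather than $t$ and analyze the function $h(s) := w - \nvM s$ on $s \in [0, 1/\cM]$. Because $\Ct_4$ is the zero level set of $\Lt_4^{+-}$, one has $w - \nvm s = \sqrt{2\Nt_4^{+-}(s)}$ along the curve, and this quantity is strictly positive on the whole interval since $\Ct_4 \subset \Rt_4$ with $\wt_* > \nvm/(2\cM) > 0$ by Proposition~\ref{prop:superAC3}. Hence $s' = w - \nvm s > 0$, so $s$ increases monotonically from $0$ at $\tt_*$ to $1/\cM$ at $\tt_3$, giving
\[
h(s) = \sqrt{2\Nt_4^{+-}(s)} - (\nvM - \nvm)s, \qquad s \in [0, 1/\cM].
\]
The endpoint values are $h(0) = \wt_*$ and $h(1/\cM) = \wt_3 - \nvM/\cM$, and the estimates $\wt_* > \nvm/(2\cM)$ and $\wt_3 - \nvM/\cM > \sqrt{k/\cM}$ already established inside the proof of Proposition~\ref{prop:superAC3} deliver exactly the claimed bound at both endpoints.

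The remaining task is to rule out that $h$ dips below this endpoint minimum in the interior. The plan is to show that every interior critical point of $h$ is a strict local maximum. Using \eqref{eq:Nt4}, one computes
\[
h'(s) = \frac{k(1-\cM s)}{\sqrt{2\Nt_4^{+-}(s)}} - \nvM,
\]
so at a critical point $s_c \in (0, 1/\cM)$ the identity $\sqrt{2\Nt_4^{+-}(s_c)} = k(1-\cM s_c)/\nvM$ holds, which in turn, via \eqref{eq:Nt4}, forces the slope of $\sqrt{2\Nt_4^{+-}}$ at $s_c$ to equal $\nvM - \nvm \geq 0$. Substituting these two identities into $h''(s_c)$ yields
\[
h''(s_c) = -\frac{k\cM + \nvM(\nvM - \nvm)}{\sqrt{2\Nt_4^{+-}(s_c)}} < 0,
\]
confirming the local-max claim. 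Two such interior maxima would force a local minimum between them, which is impossible since every interior critical point is a local max; hence $h$ has at most one interior critical point, and its minimum on $[0, 1/\cM]$ is attained at an endpoint, which completes the argument.

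The main obstacle is the second-derivative calculation at $s_c$: one must invoke \eqref{eq:Nt4} twice at the same point, first to express $h''$ in terms of $\sqrt{2\Nt_4^{+-}(s_c)}$ and the slope of $\sqrt{2\Nt_4^{+-}}$ at $s_c$, and then to evaluate the latter via the critical-point identity. The reparametrization by $s$ is essential here: attempting the analysis directly in $t$ through the oscillatory explicit formulas of Step~4 would be considerably more cumbersome. Once the local-max property is in hand, the argument closes using only the endpoint bounds already recorded in Proposition~\ref{prop:superAC3}.
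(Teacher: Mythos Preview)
Your proof is correct and follows the same overall strategy as the paper: reparametrize $\Ct_4$ by $s$, write $h(s)=\sqrt{2\Nt_4^{+-}(s)}-(\nvM-\nvm)s$, invoke the endpoint estimates $h(0)=\wt_*>\nvm/(2\cM)$ and $h(1/\cM)=\wt_3-\nvM/\cM>\sqrt{k/\cM}$ from Proposition~\ref{prop:superAC3}, and then argue that the minimum of $h$ on $[0,1/\cM]$ is attained at an endpoint.

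The only difference is in that last step. The paper shows directly that $\mathsf{w}(s)=\nvm s+\sqrt{2\Nt_4^{+-}(s)}$ is concave on the whole interval by computing $\mathsf{w}''(s)$ and bounding it via the AM--GM inequality together with the weak-alignment condition $\nvm<2\sqrt{k\cM}$; concavity of $h$ then follows at once. Your route is a bit more economical: you only evaluate $h''$ at interior critical points, where the identity $\sqrt{2\Nt_4^{+-}(s_c)}=k(1-\cM s_c)/\nvM$ collapses the computation to $h''(s_c)=-(k\cM+\nvM(\nvM-\nvm))/\sqrt{2\Nt_4^{+-}(s_c)}<0$, without ever appealing to $\nvm<2\sqrt{k\cM}$. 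Both arguments land on the same conclusion that the minimum sits at an endpoint; yours trades the global concavity statement for a cleaner local computation.
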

\begin{proof}
 Recall that we can parametrize $\Ct_4$ by 
 \[\Ct_4=\Big\{(w,s): w = \textsf{w}(s),\,\, s\in[0,\st_3]\Big\},
 \quad\text{where}\quad \textsf{w}(s) = \nvm s+\sqrt{2\Nt_4^{+-}(s)}.\]
 We show that $\textsf{w}(s)$ is concave for $s\in[0,\st_3]$. Indeed, we apply \eqref{eq:Nt4} and compute:
 \begin{align*}
  \textsf{w}''(s) & = \frac{\textrm{d}^2}{\textrm{d}s^2}\sqrt{2\Nt_4^{+-}(s)} = \frac{\textrm{d}}{\textrm{d}s}\Big(\frac{-\nvm\sqrt{2\Nt_4^{+-}(s)}+k(1-\cM s )}{\sqrt{2\Nt_4^{+-}(s)}}\Big) = \frac{k}{\sqrt2}\frac{\textrm{d}}{\textrm{d}s}\Big(\frac{1-\cM s}{\sqrt{\Nt_4^{+-}(s)}}\Big)\\
  & = \frac{k}{2\sqrt2 \big(\Nt_4^{+-}(s)\big)^{\frac32}}\Big(-2\cM\Nt_4^{+-}(s)-k(1-\cM s)^2+\nvm(1-\cM s)\sqrt{2\Nt_4^{+-}(s)}\Big).
 \end{align*}
 The right-hand side is negative due to Cauchy-Schwarz inequality:
 \[ 2\cM\Nt_4^{+-}(s) + k(1-\cM s)^2 \geq 2\sqrt{2k\cM\Nt_4^{+-}(s)(1-\cM s)^2}>\nvm(1-\cM s)\sqrt{2\Nt_4^{+-}(s)}, \]
 where we have also used and $\nvm < 2\sqrt{k\cM}$.
 
 Now, consider the function $f(s) = \mathsf{w}(s) -\nvM s$. Note that $w(t)-\nvM s(t)=f(s(t))$. From Proposition \ref{prop:superAC3} we obtain
 \[
f(\st_3)  = \mathsf{w}(\st_3)-\nvM\st_3 = \wt_3-\frac\nvM\cm>\sqrt{\tfrac k\cM} \quad\text{and}\quad
f(0)  = \mathsf{w}(0) = w_*>\frac{\nvm}{2\cM}.	
 \]
 Since $f$ is concave in $[0,\st_3]$, we conclude that 
 $f(s)>\min\big\{\sqrt{\tfrac k\cM},\tfrac{\nvm}{2\cM}\big\}$. 	
\end{proof}

\section{Extensions}\label{sec:extension}
Several assumptions made in this paper can be relaxed. In this section, we briefly discuss possible extensions and generalizations of our main results.
\subsection{Spatial domain}\label{sec:ext:R}
We assume a periodic spatial domain $x\in\T$ throughout the paper. However, our results can be extended to the whole space $\R$. The critical thresholds established in Theorem \ref{thm:thresholds} are derived along each characteristic path, making them valid in the whole space case without modification. The primary technical challenge lies in the local well-posedness result of Theorem \ref{thm:LWP}, where additional care is needed to handle behavior at infinity. For instance, the estimate \eqref{eq:uinf} can not be obtained. In \cite{choi2024critical}, an extra assumption on $\rho_0 - c_0 \in \dot{H}^{-1}(\R)$ is introduced to control the behavior at infinity and ensure the propagation of regularity. This framework can be adapted to our general EPA system.

\subsection{Communication protocol}
Our result extends to the case where the communication protocol $\psi$ is weakly singular. In the absence of assumption \eqref{A5}, the bounds on $\nu$ in \eqref{eq:nuc} must be derived differently, by
\[\nvm=\|\psi\|_{L^1}\rho_{\min}(t)\leq\nu(x,t)=\int_\T\psi(x-y)\rho(x,t)\,\dd x\leq \|\psi\|_{L^1}\rho_{\max}(t)=\nvM,\] 
 where the bounds $\nu_\pm$ depending on the solution $\rho$. Following the approach in \cite{bhatnagar2023critical}, we may construct a smaller invariant region contained in $\R \times [\rho_-, \rho_+]$. For a detailed discussion on the optimal choice of $\rho_\pm$, see \cite{bhatnagar2023critical}.

\subsection{Presence of vacuum}
We assume $\rho_0 > 0$ in \eqref{eq:reginit} to ensure that the variables $(w, s)$ are well-defined. However, the presence of vacuum is also allowed. When $\rho_0(x) = 0$, rather than using the dynamics in \eqref{S}, we work directly with the equation \eqref{eq:rhoG}. Along the characteristic path, $\rho = 0$, and the dynamics reduce to
\begin{equation}\label{eq:vacuumG}	
 G'=-G(G-\nu)-kc.
\end{equation}
The following result, consistent with Figure \ref{fig:CT}, describes this scenario. 
\begin{proposition}Consider the dynamics \eqref{eq:vacuumG} with initial condition $G(0)=G_0$.
 \begin{itemize}
 	\item For weak alignment, $G(t)\to-\infty$ in finite time for any $G_0\in\R$.
 	\item For strong or median alignment, $G(t)\to-\infty$ in finite time if $G_0<G_\sharp$.
 	\item For strong alignment, $G(t)$ is bounded in all time if $G_0\geq G_\flat$.
 \end{itemize}
\end{proposition}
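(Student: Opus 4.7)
The plan is to treat \eqref{eq:vacuumG} as a scalar Riccati-type ODE
\[
G'=F(G,\nu,c):=-G^{2}+\nu G-kc,
\]
with coefficients $\nu(t)\in[\nvm,\nvM]$ and $c(t)\in[\cm,\cM]$, and to sandwich $G(t)$ between orbits of constant-coefficient localized ODEs $\widetilde G'=F(\widetilde G,\nu_{*},c_{*})$ via the standard scalar ODE comparison principle. The key structural facts are that $F$ is decreasing in $c$ for fixed $(G,\nu)$, and monotone in $\nu$ with a sign determined by $\mathrm{sgn}(G)$; the former is unconditional, while the latter forces the choice of localization to depend on whether $G$ is currently positive or negative.

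For the weak alignment case, the assumption $\nvM<2\sqrt{k\cm}$, combined with the elementary bound $\nu G\le\nvM|G|$ valid for all $G\in\R$, yields
\[
G'\le -G^{2}+\nvM|G|-k\cm
=-\big(|G|-\tfrac{\nvM}{2}\big)^{2}+\tfrac{\nvM^{2}}{4}-k\cm
\le -\big(k\cm-\tfrac{\nvM^{2}}{4}\big)<0.
\]
Thus $G$ is strictly decreasing with a uniform negative upper bound on $G'$, which rules out a finite limit. Once $|G|$ is large enough the same inequality yields $G'\le-\tfrac12 G^{2}$, and a direct integration gives $G(t)\to-\infty$ in finite time.

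For the strong or median alignment case with $G_{0}<G_{\sharp}$, I would split on the sign of $G$. If $G\le 0$, then $\nu G\le 0$ and $G'\le-k\cm<0$, and quadratic dominance for $|G|$ large again produces finite-time blowup. If $0\le G_{0}<G_{\sharp}$, I compare from above with the localized trajectory $\widetilde G$ of $\widetilde G'=-\widetilde G^{2}+\nvM\widetilde G-k\cm$, $\widetilde G(0)=G_{0}$, whose smaller equilibrium is precisely $G_{\sharp}$. On $\{G\ge 0\}$ both $(\nu-\nvM)G\le 0$ and $k(\cm-c)\le 0$, so $G\le\widetilde G$ is preserved; since $\widetilde G_{0}<G_{\sharp}$, the autonomous Riccati $\widetilde G$ decreases and crosses zero at some finite time $T_{1}$, and then $G(T_{1})<0$ reduces the situation to the first subcase.

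For the boundedness claim under strong alignment with $G_{0}\ge G_{\flat}$, I would first secure the lower bound $G(t)\ge G_{\flat}$ by comparison from below with the equilibrium $\widetilde G\equiv G_{\flat}$ of $\widetilde G'=-\widetilde G^{2}+\nvm\widetilde G-k\cM$: the sign check at $G=G_{\flat}$ reduces to $(\nu-\nvm)G_{\flat}+k(\cM-c)\ge 0$, which holds because $G_{\flat}>0$. With $G(t)>0$ in hand, the upper bound follows by comparing from above with the orbit of $\widetilde G'=-\widetilde G^{2}+\nvM\widetilde G-k\cm$ started from $\max(G_{0},G_{\sharp}^{+})$, where $G_{\sharp}^{+}=\tfrac{\nvM+\sqrt{\nvM^{2}-4k\cm}}{2}$ is the larger root; this orbit is trapped in $[G_{\sharp},\max(G_{0},G_{\sharp}^{+})]$. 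The step I expect to require the most care is verifying the ordering $G_{\flat}\ge G_{\sharp}$ so that the lower and upper comparison windows are compatible; writing $g(\nu,c):=\tfrac{\nu-\sqrt{\nu^{2}-4kc}}{2}$ and checking that $g$ is decreasing in $\nu$ and increasing in $c$ disposes of this obstacle, so the only real work is the sign bookkeeping required to justify the comparisons, not any delicate estimate.
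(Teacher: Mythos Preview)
Your proposal is correct and follows essentially the same line as the paper's proof: treat \eqref{eq:vacuumG} as a scalar Riccati inequality, localize the coefficients to $(\nu_\pm,c_\pm)$ according to the sign of $G$, and read off blowup or boundedness from the constant-coefficient comparison dynamics. The only superfluous step is your final compatibility check $G_\flat\ge G_\sharp$: since you launch the upper comparison orbit from $\max(G_0,G_\sharp^{+})\ge G_\sharp^{+}$ rather than from $G_0$, that orbit is trapped above $G_\sharp^{+}$ regardless of how $G_\flat$ and $G_\sharp$ are ordered, so no such inequality is needed.
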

\begin{proof}
For weak alignment $\nvM<2\sqrt{k\cm}$, we deduce from \eqref{eq:vacuumG} and get
 \[G'=-\left(G-\frac\nu2\right)^2+\frac{\nu^2}{4}-kc\leq -\left(G-\frac\nu2\right)^2+\frac{\nvM^2}4-k\cm<-\left(G-\frac\nu2\right)^2,\]
leading to finite time blowup, for any initial data.

For median and strong alignment $\nvM\geq2\sqrt{k\cm}$, we have
\[G'(t)\leq -G(t)^2+\nvM G(t)-k\cm=-(G(t)-G_-)(G(t)-G_+),\quad \text{if}\,\, G(t)\geq0,\]
where
\[G_\pm=\frac{\nvM\pm \sqrt{\nvM^2-4k\cm}}{2},\quad\text{and in particular}\,\, G_-=G_\sharp.\]
Therefore, if $G_0<G_\sharp$, $G(t)$ will reach 0 in finite time. When $G(t)\leq0$, we have
\[G(t)'\leq -G(t)^2+\nvm G(t)-k\cm\leq -G(t)(G(t)-\nvm),\]
leading to finite time blowup.

For strong alignment $\nvm>2\sqrt{k\cM}$,  we deduce from \eqref{eq:vacuumG}
\[G'(t)\geq -G(t)^2+\nvm G(t)-k\cM=-(G(t)-\widetilde{G}_-)(G(t)-\widetilde{G}_+),\quad\text{if}\,\, G(t)\geq0,\]
where
\[\widetilde{G}_\pm=\frac{\nvm \pm \sqrt{\nvm^2-4k\cM}}{2},\quad\text{and in particular}\,\, \widetilde{G}_-=G_\flat.\]
Therefore, if $G_0\geq G_\flat$, we obtain the lower bound $G(t)\geq G_\flat$ for any $t\geq0$. On the other hand, $G'(t)\leq0$ when $G(t)\geq\widetilde{G}_+$. Hence, $G(t)$ is bounded above by $\max\{G_0, \widetilde{G}_+\}$.
\end{proof}

\subsection{Attractive Poisson force}
The EPA system with attractive Poisson force ($k < 0$) can be analyzed within the same framework. See related discussions in \cite{bhatnagar2020critical,choi2024critical}. In this case, the eigenvalues in \eqref{eq:lambda12} are always real, implying that the solutions do not exhibit oscillatory behavior, which generally simplifies the analysis. However, as noted in \cite{choi2024critical}, the resulting subcritical region may collapse to only include trivial initial data, suggesting that finite-time blowup is unavoidable in most cases.

\subsection{Multi-dimensions}
The critical threshold analysis of the multi-dimensional EPA system presents significant challenges, primarily due to the lack of control over the \emph{spectral gap} of $\nabla u$, a difficulty also encountered in the analysis of the multi-dimensional Euler-Poisson equations. To circumvent this issue, a Restricted Euler-Poisson (REP) system was introduced and studied in \cite{tadmor2003critical}. Similar ideas can be extended to formulate and analyze a restricted version of the EPA system.

\appendix

\section{Local well-posedness and regularity criterion}\label{sec:LWP}
In this section, we present the proof of Theorem \ref{thm:LWP}. We denote the fractional differential operator by $\Lambda^s := (-\Delta)^{s/2}$, and the homogeneous semi-norm by
\[\|f\|_{\dot{H}^s}=\|\Lambda^s f\|_{L^2}.\]
We also use the notation $\lesssim$, where $A \lesssim B$ means there exists a constant $C$, depending on parameters and initial data, such that $A\leq CB$.

\begin{proof}[Proof of Theorem \ref{thm:LWP}]
We perform a priori energy estimate on $(\theta:=\rho-c,G)$ to control the growth of the solution and establish global bounds.
    
First, we estimate $\|\theta\|_{\dot{H}^s}$ for any $s\geq0$. From the continuity equation \eqref{eq:main}$_1$, we obtain the dynamics of $\theta=\rho-c$ as:
\[
    \pa_t\theta+\pa_x(\theta u) = -\pa_tc-\pa_x(cu).
\]
Compute    
\begin{align*}
    &\frac{1}{2}\frac{\dd}{\dd t}\|\theta\|_{\dot{H}^s}^2 
    = \int \Lambda^s\theta\cdot \Lambda^s\pa_t\theta\,\dd x
    = -\int \Lambda^s\theta\cdot\Lambda^s\pa_x(\theta u)\,\dd x -\int \Lambda^s\theta\cdot\Lambda^s\big(\pa_tc+\pa_x(cu)\big)\,\dd x\\
    & \quad = -\int\Lambda^s\theta\cdot u\cdot \Lambda^s\pa_x\theta\,\dd x-\int \Lambda^s\theta\cdot [\Lambda^s\pa_x, u]\theta\,\dd x-\int \Lambda^s\theta\cdot\Lambda^s\big(\pa_tc+\pa_x(cu)\big)\,\dd x\\
    & \quad = \mathrm{I}_1+\mathrm{I}_2+\mathrm{I}_3,
\end{align*}
where $[\cdot,\cdot]$ denotes the commutator, whose operation is given as
\[[\mathcal{J},f]g = \mathcal{J}(fg)-f\mathcal{J}g.\]
Then \[\Lambda^s\pa_x((\rho-c)u) = [\Lambda^s\pa_x,u](\rho-c)+ \Lambda^s\pa_x(\rho-c)\cdot u\]
We estimate the three terms one by one. For $\mathrm{I}_1$, we have
\[\mathrm{I}_1=-\frac12\int \pa_x(\Lambda^s\theta)^2\cdot u\,\dd x=\frac12\int (\Lambda^s\theta)^2\cdot\pa_xu\,\dd x\leq\frac12 \|\pa_xu\|_{L^\infty}\|\theta\|_{\dot{H}^s}^2.\]
For $\mathrm{I}_2$, we apply the standard commutator estimates from \cite{kato1988commutator} and obtain
\begin{align*}
\mathrm{I}_2 &\leq \|\theta\|_{\dot{H}^s}\|[\Lambda^s\pa_x, u]\theta\|_{L^2}
\lesssim \|\theta\|_{\dot{H}^s}\big(\|\pa_xu\|_{L^\infty}\|\theta\|_{\dot{H}^s}+\|\theta\|_{L^\infty}\|\pa_xu\|_{\dot{H}^s}\big)\\
& \lesssim \|\pa_xu\|_{L^\infty}\|\theta\|_{\dot{H}^s}^2+(\|\rho\|_{L^\infty}+\cM)(\|G\|_{\dot{H}^s}+1+\|\theta\|_{\dot{H}^s})\|\theta\|_{\dot{H}^s}\\
& \lesssim (1+\|\rho\|_{L^\infty}+\|\pa_xu\|_{L^\infty})(1+\|\theta\|_{\dot{H}^s}^2+\|G\|_{\dot{H}^s}^2),
\end{align*}
where the relation \eqref{eq:G} is used in the penultimate inequality, along with the estimate
\[
\|\psi\ast\rho\|_{\dot{H}^s}\leq\|\psi\|_{L^1}\|\rho\|_{\dot{H}^s}\leq\psiM|\T|\big(\|\theta\|_{\dot{H}^s}+\|c\|_{\dot{H}^s}\big)\lesssim 1+\|\theta\|_{\dot{H}^s}.
\] 
Note that $\|\psi\|_{L^1}\leq\psiM|\T|$ is bounded, and hence can be absorbed into the constant. 

For $\mathrm{I}_3$, we use  the fractional Leibniz rule to get
\[
 \mathrm{I}_3 \lesssim \|\theta\|_{\dot{H}^s}\big(\|\pa_tc\|_{\dot{H}^s}+\|u\|_{L^\infty}\|c\|_{\dot{H}^{s+1}}+\|c\|_{L^\infty}\|\pa_xu\|_{\dot{H}^s}\big).
\]
We control $\|u\|_{L^\infty}$ as follows. Suppose $u$ is differentiable, then
\[|u(x)| = \left|u(x_*)+\int_{x_*}^x u'(y)\,\dd y\right|\leq |u(x_*)|+|\T|\cdot \|\pa_xu\|_{L^\infty}.\]
We may choose any $x_*$ such that $u(x_*)$ is below average, namely
\[u(x_*)\leq \bar{u}:=\frac{\int\rho(x)u(x)\,\dd x}{\int\rho(x)\,\dd x}\]
From conservation of mass and momentum, we have
\[|u(x_*,t)|\leq\frac{\left|\int\rho(x,t)u(x,t)\,\dd x\right|}{\int\rho(x,t)\,\dd x}=\frac{\left|\int\rho_0(x)u_0(x)\,\dd x\right|}{\int\rho_0(x)\,\dd x}\leq \|u_0\|_{L^\infty}.\]
Hence,
\begin{equation}\label{eq:uinf}
 \|u\|_{L^\infty}\leq \|u_0\|_{L^\infty}+|\T|\cdot\|\pa_xu\|_{L^\infty}\lesssim 1+\|\pa_xu\|_{L^\infty}.	
\end{equation}
We would like to point out that the estimate \eqref{eq:uinf} may not hold in the whole space $\Omega=\R$. See Section \ref{sec:ext:R} for more discussion.
Applying \eqref{eq:uinf} and the regularity assumption \eqref{A2} on $c$, we conclude with
\[
 \mathrm{I}_3 \lesssim (1+\|\pa_xu\|_{L^\infty})(1+\|\theta\|_{\dot{H}^s}^2).
\]
Combining the estimates on $\mathrm{I}_1$, $\mathrm{I}_2$ and $\mathrm{I}_3$, we end up with
\begin{equation}\label{eq:energytheta}
 \frac{\dd}{\dd t}\|\theta\|_{\dot{H}^s}^2 \lesssim (1+\|\rho\|_{L^\infty}+\|\pa_xu\|_{L^\infty})(1+\|\theta\|_{\dot{H}^s}^2+\|G\|_{\dot{H}^s}^2).	
\end{equation}

Next, we estimate $\|G\|_{\dot{H}^s}$, for any $s\geq0$. From the dynamics \eqref{eq:main}$_2$, we compute
\begin{align*}
    \frac{1}{2}\frac{\dd}{\dd t}\|G\|_{\dot{H}^s}^2 &= 
    \int \Lambda^sG \cdot \Lambda^s\pa_t G\,\dd x = -\int \Lambda^sG\cdot\Lambda^s\pa_x(G u)\,\dd x +k\int \Lambda^sG\cdot\Lambda^s\theta\,\dd x\\
    &= -\int\Lambda^sG\cdot u\cdot \Lambda^s\pa_xG\,\dd x-\int \Lambda^sG\cdot [\Lambda^s\pa_x, u]G\,\dd x+k\int \Lambda^sG\cdot\Lambda^s\theta\,\dd x\\
    &= \mathrm{II}_1+\mathrm{II}_2+\mathrm{II}_3.
\end{align*}
The terms $\mathrm{II}_1$ and $\mathrm{II}_2$ can be estimated using the similar argument as for $\mathrm{I}_1$ and $\mathrm{I}_2$, respectively.
\[
\mathrm{II}_1 \leq\frac12 \|\pa_xu\|_{L^\infty}\|G\|_{\dot{H}^s}^2,\quad\text{and}\quad
\mathrm{II}_2 \lesssim (1+\|\pa_xu\|_{L^\infty}+\|G\|_{L^\infty})(1+\|\theta\|_{\dot{H}^s}^2+\|G\|_{\dot{H}^s}^2).
\]
The term $\mathrm{II}_3$ can be bounded by
\[
\mathrm{II}_3 \lesssim \|\theta\|_{\dot{H}^s}^2+\|G\|_{\dot{H}^s}^2.
\]
Putting the estimates on $\mathrm{II}_1$, $\mathrm{II}_2$ and $\mathrm{II}_3$ together yields the bound:
\begin{equation}\label{eq:energyG}
  \frac{\dd}{\dd t}\|G\|_{\dot{H}^s}^2 \lesssim (1+\|\pa_xu\|_{L^\infty}+\|G\|_{L^\infty})(1+\|\theta\|_{\dot{H}^s}^2+\|G\|_{\dot{H}^s}^2).	
\end{equation}

where we have used the relation \eqref{eq:G} to estimate
\[
 \|G\|_{L^\infty}\leq \|\pa_xu\|_{L^\infty}+\|\psi\ast\rho\|_{L^\infty}\leq \|\pa_xu\|_{L^\infty} + \psiM\cdot\|\rho_0\|_{L^1}\lesssim 1+\|\pa_xu\|_{L^\infty}.
\]

Finally, we define the energy
\[ E_s(t) := \|\theta\|_{H^s}^2+\|G\|_{H^s}^2.\]
Combining the two energy estimates \eqref{eq:energytheta} and \eqref{eq:energyG} would yield
\begin{align}
\frac{\dd}{\dd t}E_s(t) &\lesssim (1+\|\rho(\cdot,t)\|_{L^\infty}+\|\pa_xu(\cdot,t)\|_{L^\infty}+\|G(\cdot,t)\|_{L^\infty})(1+E_s(t))\nonumber\\
&\lesssim (1+\|\rho(\cdot,t)\|_{L^\infty}+\|G(\cdot,t)\|_{L^\infty})(1+E_s(t)),\label{eq:Es}
\end{align}
for any $s\geq0$, where we have used the relation \eqref{eq:G} to estimate
\[
 \|\pa_xu\|_{L^\infty}\leq \|G\|_{L^\infty}+\|\psi\ast\rho\|_{L^\infty}\leq \|G\|_{L^\infty} + \psiM\cdot\|\rho_0\|_{L^1}\lesssim 1+\|G\|_{L^\infty}.
\]
If $s>\tfrac12$, we apply Sobolev embedding $H^s(\T)\hookrightarrow L^\infty(\T)$ and obtain
\[\frac{\dd}{\dd t}E_s(t)\lesssim (1+E_s(t))^{\frac32},\]
which implies local regularity \eqref{eq:regrhoG}. Consequently, \eqref{eq:regrhou} holds as
\[\|\pa_xu\|_{H^s}\leq \|G\|_{H^s}+\|\psi\ast\rho\|_{H^s}\lesssim 1+\|G\|_{H^s}+\|\theta\|_{H^s}<\infty,\]
and 
\[\|u\|_{L^2}\leq\|u\|_{L^\infty}\cdot|\T|^{-1/2}\lesssim 1+\|\pa_xu\|_{L^\infty}\lesssim 1+\|G\|_{L^\infty}<\infty.\]
Moreover, we apply Gr\"onwall inequality to \eqref{eq:Es} and get
\[E_s(T)\leq E_s(0)\exp\left(CT+C\int_0^T\left(\|\rho(\cdot,t)\|_{L^\infty}+\|G(\cdot,t)\|_{L^\infty}\right)\,\dd t\right).\]
Hence, $E_s(T)<\infty$ as long as criterion \eqref{eq:BKM} holds. 
\end{proof}

%%%%%% bibliography 
\bibliographystyle{plain}
\bibliography{bib_EPA}

\end{document}